\documentclass[a4paper,onecolumn,oneside,10pt]{article}

\usepackage{geometry}
\usepackage{xcolor, tikz}

\usepackage[hyphens]{url}

\usepackage[title]{appendix}
\usepackage{soul}

\usepackage[T1]{fontenc}
\usepackage{hyperref}
\usepackage{lineno, amsthm}
\usepackage{mathtools, comment}
\usepackage{mathdots}
\usepackage{enumerate}
\usepackage{textcomp}
\usepackage{amssymb,amsfonts}
\usepackage{amsmath,paralist,enumitem,mathrsfs,graphicx}
\usepackage{epstopdf}
\usepackage{pgfplots}

\theoremstyle{definition}
\newtheorem{ex}{Example}
\theoremstyle{plain}
\newtheorem{rem}{Remark}
\newtheorem{theorem}{Theorem}
\newtheorem{lemma}[theorem]{Lemma}
\newtheorem{prop}[theorem]{Proposition}
\newtheorem{cor}[theorem]{Corollary}
\newtheorem{ass}{Assumption}
\newcommand\ts{\scriptscriptstyle\square}
\newcommand\norm[1]{\left\lVert#1\right\rVert}
\makeatletter
\let\save@mathaccent\mathaccent
\newcommand*\if@single[3]{%
	\setbox0\hbox{${\mathaccent"0362{#1}}^H$}%
	\setbox2\hbox{${\mathaccent"0362{\kern0pt#1}}^H$}%
	\ifdim\ht0=\ht2 #3\else #2\fi
}
\newcommand*\rel@kern[1]{\kern#1\dimexpr\macc@kerna}
\newcommand*\widebar[1]{\@ifnextchar^{{\wide@bar{#1}{0}}}{\wide@bar{#1}{1}}}
\newcommand*\wide@bar[2]{\if@single{#1}{\wide@bar@{#1}{#2}{1}}{\wide@bar@{#1}{#2}{2}}}
\newcommand*\wide@bar@[3]{%
	\begingroup
	\def\mathaccent##1##2{%
		\let\mathaccent\save@mathaccent
		\if#32 \let\macc@nucleus\first@char \fi
		\setbox\z@\hbox{$\macc@style{\macc@nucleus}_{}$}%
		\setbox\tw@\hbox{$\macc@style{\macc@nucleus}{}_{}$}%
		\dimen@\wd\tw@
		\advance\dimen@-\wd\z@
		\divide\dimen@ 3
		\@tempdima\wd\tw@
		\advance\@tempdima-\scriptspace
		\divide\@tempdima 10
		\advance\dimen@-\@tempdima
		\ifdim\dimen@>\z@ \dimen@0pt\fi
		\rel@kern{0.6}\kern-\dimen@
		\if#31
		\overline{\rel@kern{-0.6}\kern\dimen@\macc@nucleus\rel@kern{0.4}\kern\dimen@}%
		\advance\dimen@0.4\dimexpr\macc@kerna
		\let\final@kern#2%
		\ifdim\dimen@<\z@ \let\final@kern1\fi
		\if\final@kern1 \kern-\dimen@\fi
		\else
		\overline{\rel@kern{-0.6}\kern\dimen@#1}%
		\fi
	}%
	\macc@depth\@ne
	\let\math@bgroup\@empty \let\math@egroup\macc@set@skewchar
	\mathsurround\z@ \frozen@everymath{\mathgroup\macc@group\relax}%
	\macc@set@skewchar\relax
	\let\mathaccentV\macc@nested@a
	\if#31
	\macc@nested@a\relax111{#1}%
	\else
	\def\gobble@till@marker##1\endmarker{}%
	\futurelet\first@char\gobble@till@marker#1\endmarker
	\ifcat\noexpand\first@char A\else
	\def\first@char{}%
	\fi
	\macc@nested@a\relax111{\first@char}%
	\fi
	\endgroup
}

\newcommand{\upperRomannumeral}[1]{\uppercase\expandafter{\romannumeral#1}}
\newcommand{\lowerRomannumeral}[1]{\lowercase\expandafter{\romannumeral#1}}

\usetikzlibrary{pgfplots.dateplot}

\newcommand\restr[2]{{
		\left.\kern-\nulldelimiterspace 
		#1 
		\vphantom{\big|} 
		\right|_{#2} 
}}
\newcommand{\dd}{\mathrm{d}}

\makeatletter
\def\ps@pprintTitle{%
	\let\@oddhead\@empty
	\let\@evenhead\@empty
	\def\@oddfoot{\footnotesize\itshape
		\ifx\@empty\@empty
		\else\@journal\fi\hfill\today}%
	\let\@evenfoot\@oddfoot	
}

\geometry{left=2.25cm, right=2.25cm, top=3.4cm, bottom=2.9cm, head=1cm, headsep=1cm, foot=1cm}

\title{On the Kolmogorov equation associated with Volterra equations and Fractional Brownian Motion\footnote{The research of Alessandro Bondi benefited from the financial support of the chair ``Statistiques et Modèles pour le Régulation'' of École Polytechnique. The study for this paper began during the Ph.D. of Alessandro Bondi at Scuola Normale Superiore di Pisa, which the author thanks.}}  
\author{Alessandro Bondi\thanks{Centre de Mathématiques Appliquées (CMAP), CNRS, École Polytechnique, Institut Polytechnique de Paris. \textbf{Email:} alessandro.bondi@polytechnique.edu} 
	\thanks{Classe di Scienze, Scuola Normale Superiore di Pisa}
	\and {Franco Flandoli	\thanks{Classe di Scienze, Scuola Normale Superiore di Pisa. \textbf{Email:} franco.flandoli@sns.it} }}
	\begin{document}
\maketitle
		\begin{abstract}
		We consider a Volterra convolution equation in $\mathbb{R}^d$ perturbed with an additive fractional Brownian motion of Riemann–Liouville type with Hurst parameter $H\in (0,1)$.
		We show that its solution solves a stochastic partial differential equation (SPDE) in the Hilbert space of square–integrable functions. Such an equation motivates our  study of an unconventional class of SPDEs requiring an original extension of the drift operator and its Fréchet differentials. We prove that these SPDEs generate a Markov stochastic flow which is twice Fréchet differentiable with respect to the initial data. This stochastic flow is then employed to solve, in the classical sense of  infinite dimensional calculus, the  path–dependent Kolmogorov equation corresponding to the SPDEs. In particular, we associate a time–dependent infinitesimal generator with the fractional Brownian motion.
		In the final section, we show some obstructions  in the analysis of the mild formulation of the Kolmogorov equation for SPDEs driven by the same infinite dimensional noise. This problem, which is relevant to the theory of regularization-by-noise, remains open for future research.
			\\[1ex] 
			\noindent\textbf {Keywords:} path–dependent Kolmogorov equations; stochastic Volterra equations; stochastic partial differential equations; fractional Brownian motion
			\\[1ex] 
			\noindent\textbf{MSC2020:} 35R15, 45D05, 60G22, 60H15
		\end{abstract}

	\section{Introduction}\label{intro}
Consider the stochastic differential equation (SDE) in $\mathbb{R}^{d}$ with additive noise
\begin{equation}
	X_{t}=x_{0}+\int_{0}^{t}k_{1}\left(  t-s\right)  b\left(  s,X_{s}\right)
	\dd s
	+
	\frac{1}{\Gamma\left(\alpha\right)}\int_{0}^{t}\left(  t-s\right)^{\alpha-1}  \dd W_{s}\label{SDE Volterra},%
\end{equation}
where $x_{0}\in\mathbb{R}^{d}$, $\alpha\in(\frac12,1)$, $W=\left(  W_{t}\right)  _{t\geq0}$ is a standard
Brownian motion in $\mathbb{R}^{d}$, $b:\left[  0,T\right]  \times
\mathbb{R}^{d}\rightarrow\mathbb{R}^{d}$ is a measurable vector field and{ \color{black}$k_{1}$ is a locally square--integrable, $\mathbb{R}-$valued kernel that is continuous in $(0,\infty)$}. This equation belongs to the class of  stochastic Volterra equations (of convolution type), which is characterized by a wide and continuously expanding body of literature, see for instance \cite{ACLP, sergio, aff_jumps,  HL, MS,  Pr}.
The additive noise driving the SDE \eqref{SDE Volterra} is a fractional Brownian motion (henceforth, fBm) of Riemann–Liouville type, with Hurst parameter $H=\alpha-\frac{1}{2}\in(0,\frac12)$. Our motivation for studying this random perturbation stems from its relevance in mathematical finance, particularly in  the field of rough volatility models, see \cite{rHH, RH,qr}. However, the theory that we develop in this paper encompasses also the case $\alpha\in [1,\frac32)$, corresponding to a fBM with Hurst parameter $H\in [\frac{1}{2},1)$, exhibiting smoother trajectories and longer memory.


Inspired by \cite{FZ}, our aim is to insert the Volterra SDE \eqref{SDE Volterra} in a class of infinite
dimensional SDEs in a separable Hilbert space $(H, \langle\cdot, \cdot\rangle_{H})$ of the form%
\begin{equation}\label{SDE Hilbert}
	w_{t}=\phi+\int_{0}^{t}B\left(  s,w_{s}\right)  \dd s+\int_{0}^{t}\sigma\left(s\right)\,\dd W_{s},\quad \phi\in H, 
\end{equation}
where $\sigma\colon[0,T]\to \mathcal{L}(\mathbb{R}^d;H) $ and $B\colon [0,T]\times H\to H$. In order to  achieve this objective,  we need to consider a drift $B$ with an unconventional structure. This motivates the study  carried out in this paper of a novel class of stochastic partial differential equations (SPDEs) and of the associated stochastic flow's regularity. Notably, these SPDEs require an extension of the drift operator and its Fréchet differentials.\\
Given $\Phi\colon H\to \mathbb{R}$, we then study the following backward Kolmogorov equation associated with \eqref{SDE Hilbert}:
\begin{align*}
	\begin{cases}
		\partial_{t}u\left(  t,\phi\right)  +  \mathcal{A}_tu  \left(
		t,\phi\right)  =0,\quad t\in\left[  0,T\right] ,\,\phi\in H,\\
		u\left(  T,\phi\right)    =\Phi\left(  \phi\right),
	\end{cases}	
\end{align*}
which will be interpreted in integral form, see \eqref{kolm}. 
Here $\mathcal{A}_t$, the time–dependent infinitesimal generator, is given by
\[
  \mathcal{A}_tu  \left(  t,\phi\right)  =\frac{1}{2}\text{Tr}
\left(
D^{2}u\left(  t,\phi\right)  \sigma(t)\sigma(t) ^\ast
\right)
+\left\langle
B\left(  t,\phi\right)  ,\nabla u\left(  t,\phi\right)  \right\rangle_{H}.
\]
As
in \cite[Chapter 9]{DP}, the approach that we adopt for the existence of classical solutions of the
Kolmogorov equation is based on  a careful analysis of \eqref{SDE Hilbert} and on the formula%
\begin{equation}\label{dais}
u\left(  t,\phi\right)  =\mathbb{E}\left[  \Phi\left(  w_{T}^{t,\phi}\right)
\right],
\end{equation}
where $w_{t}^{t_{0},\phi}$, $t\in\left[  t_{0},T\right]  $, is the solution
of an analogue of \eqref{SDE Hilbert} starting at time $t_{0}$
instead of $0$.

It is worth noting that we use classical tools of  infinite dimensional calculus, such as the Frech\'{e}t
derivative, when analyzing the Kolmogorov equation. This is a novelty compared to other studies  addressing  path–dependent PDEs related to Volterra SDEs, particularly \cite{VZ} (see also \cite{BR22} for a similar subject). 
In a sense, then, we unify the study of stochastic Volterra equations and fBm of Riemann–Liouville type to other
infinite dimensional systems. However, the assumptions imposed on $B$ are not
entirely classical, resulting in  an innovative abstract formulation of the problem. Consequently, the analysis developed here is only analogous to the
classical one, not included into it.

A more direct approach to the Kolmogorov equation would be also of great
interest for two reasons. Firstly, it would contribute to complete the comparison with the classical theory developed for
other classes of problems, see  \cite{DP}. Secondly, it could be used  to study regularization-by-noise phenomena for  SDEs driven by fractional Brownian motion, which are investigated in literature using different techniques, see, e.g., \cite{GH, GHM, HL, MS,  NO02}.  In fact, studying the Kolmogorov equation in mild form might allow to prove weak uniqueness of solutions of the
underlying SDE when the drift is not smooth, see  \cite{DFPR,DFRV, FlaSF,Stroock,Veret}. In an attempt to
develop such a direct approach, we have identified obstructions that we report in Section \ref{mde}, so this problem remains open.

The paper is structured as follows. In Section \ref{section abstract} we show the connection between the Volterra SDE \eqref{SDE Volterra} and the SDE  \eqref{SDE Hilbert}, specifying the Hilbert space $H$ considered in our study. Moreover, due to the particular structure of the drift $B$, we introduce another infinite dimensional reformulation for \eqref{SDE Volterra} (see \eqref{abstr2} in Proposition \ref{pro2}), which is at the core of our analysis. In  Section \ref{sec_abstract}  we study the reformulation given by \eqref{abstr2} in an abstract setting (see \eqref{eq_B1}), focusing also on the regularity of its  solution with respect to the initial data, see Subsections \ref{first_order}-\ref{second_order}. The related backward Kolmogorov equation in integral form is then investigated in Section \ref{sec_KO}. In Section \ref{mde} we discuss the mild formulation of the Kolmogorov equation and its importance for the theory of regularization by noise, see Subsection \ref{reg_noise}. The challenges that  we previously mentioned regarding the analysis  of such a mild formulation are explained  in  Subsection \ref{grad_est}.
Finally, in Appendix \ref{appendix1} we study the regularity of the solution of the Kolmogorov equation constructed as in \eqref{dais}.

\section{Infinite dimensional reformulations for the stochastic Volterra equation\label{section abstract}}

Let $\left(  \Omega,\mathcal{F},\mathbb{P},\mathbb{F}\right)  $ be a complete
filtered probability space, with expectation denoted by $\mathbb{E}$, where
the filtration $\mathbb{F}$\ $=\left(  \mathcal{F}_{t}\right)  _{t\in\left[
	0,T\right]  }$ satisfies the usual conditions. Fix $d\in\mathbb{N}$ and consider  an $\mathbb{{R}}^d-$valued standard Brownian motion $W=\left(  W_{t}\right)
_{t\geq0}$ defined on $\left(
\Omega,\mathcal{F},\mathbb{P},\mathbb{F}\right)$. In what follows, we denote by $k_2\colon(0,\infty)\to (0,\infty)$ the fractional kernel which controls the noise in the Volterra SDE \eqref{SDE Volterra}, namely 
\begin{equation}\label{k_f}
	k_2(t)=\frac{1}{\Gamma(\alpha)}t^{\alpha-1},\quad t>0,\text{ for some }\alpha\in\left(\frac{1}{2},1\right).
\end{equation}
As already mentioned in Introduction \ref{intro}, we note that the arguments and results of this paper continue to hold even when $\alpha\in [1,\frac{3}{2})$, i.e., when the fBM governing \eqref{SDE Volterra} has Hurst parameter in $[1/2,1)$, see also Remark~\ref{vabene}.

Fix $T>0$. Suppose that the measurable vector field $b:\left[  0,T\right]
\times\mathbb{R}^{d}\rightarrow\mathbb{R}^{d}$ 
satisfies, for some $L>0$,%
\begin{equation*}
\left|b\left(t,x\right)\right|\le L\left(1+\left|x\right|\right), \qquad 
\left| b\left(  t,x\right)  -b\left(  t,y\right)  \right| \leq
L\left| x-y\right|,
\end{equation*}
for every $t\in\left[  0,T\right]$ and   $x,y\in\mathbb{R}^{d}$.
By (strong)
solution of \eqref{SDE Volterra} we mean a {{continuous}} adapted process
satisfying the identity for every $t\in\left[  0,T\right],\, \mathbb{P}-$a.s. Existence and pathwise uniqueness
of strong solutions of  \eqref{SDE Volterra} have been studied in literature  under additional requirements on $k_1$, see, e.g., Equation (2.5) and Theorem 3.3 in \cite{sergio}.

Let $H$ be the Hilbert space 
$L^{2}\left(  0,T;\mathbb{R}^{d}\right)$ and denote  by  $\langle\cdot,\cdot\rangle_{H}$ the usual inner product.  Denoting  by $\mathcal{L}(\mathbb{R}^d;H)$ the space of linear and bounded operators from $\mathbb{R}^d$ to $H$, define $\sigma\colon[0,T]\to\mathcal{L}(\mathbb{R}^d;H)  $ by%
\begin{equation}\label{kernel}
	\left[\sigma\left(t\right)x\right]\left(\xi\right)=k_2\left(\xi-t\right)1_{\{t<\xi\}}x,
	\quad\text{}x\in\mathbb{R}^d,\,t,\xi\in\left[  0,T\right]  .
\end{equation}
For every $q\ge 2$, we denote by $\mathcal{H}^q$ the space $L^q\big(\Omega;H\big),$ endowed with the usual norm $\norm{\cdot}_{\mathcal{H}^q},$ and by $\mathcal{H}^q_t\subset \mathcal{H}^q$ the subspace of $\mathcal{F}_t-$measurable functions,  $t\in\left[0,T\right]$. Notice that 
\[
\norm{\sigma(t)}^2_{\text{HS}}\le d\norm{k_2}^2_2,\quad t\in[0,T],
\]
where $\norm{\cdot}_{\text{HS}}$ represents the Hilbert–Schmidt norm and $\norm{\cdot}_2$ the norm in $L^2(0,T;\mathbb{R})$.
As a consequence, since $\int_{0}^{T}\norm{\sigma(s)}^2_{\text{HS}}\dd s<\infty$, we can construct the stochastic integral 
\begin{equation}\label{serve_rem}
	\Sigma_{s,t}=\int_{s}^{t}\sigma\left(r\right)\dd W_r\in\mathcal{H}^q_t,\quad 0\le s \le t \le T.
\end{equation}
By \cite[Theorem $4.36$]{DP}, there exists a constant $C_{d,q}>0$ such that 
\begin{equation}\label{iso}
	\norm{\Sigma_{{s},t}}_{\mathcal{H}^q}\le C_{d,q}\norm{k_2}_2\sqrt{t-s},\quad 0\le s \le t \le T.
\end{equation}

Let $\Lambda$ be the space $C\left(  \left[  0,T\right]  ;\mathbb{R}^{d}\right)  $, and  define $B:\left[  0,T\right]  \times\Lambda\rightarrow H$  by%
\begin{equation}\label{B}
	\left[B\left(  t,w\right)\right]  \left(  \xi\right)  =k_{1}\left(  \xi-t\right)
	1_{\left\{   t<\xi\right\}  }b\left(  t,w\left(  t\right)  \right),
	\quad\text{}t,\xi\in\left[  0,T\right]  .
\end{equation}
In the sequel, a stochastic process taking values in $H$
will be denoted by, e.g., $\left(  w_{t}\right)  _{t\in\left[
	0,T\right]  }$, namely with the time variable as a subscript. Then, for a
fixed  $t_{0}\in\left[  0,T\right]  $, $w_{t_{0}}$ is a random function,
denoted by $w_{t_{0}}\left(  \xi\right)  $, $\xi\in\left[  0,T\right]  $.

In the following proposition, we show that it is possible to construct a solution to \eqref{SDE Hilbert}, i.e., an $\mathbb{F}-$adapted process with values in $H$ satisfying \eqref{SDE Hilbert} $\mathbb{P}-$a.s., for every $t\in [0,T]$,  using a 
solution of \eqref{SDE Volterra}.  
\begin{prop}\label{pr1}
	Let $X=\left(  X_{t}\right)  _{t\in\left[  0,T\right]  }$ be a solution
	of (\ref{SDE Volterra}). For every  $t\in\left[  0,T\right] $, define the $\mathbb{R}^d-$valued stochastic process $\theta_t=(\theta_t(\xi))_{\xi\in[t,T]}$ by 
	\[
	\theta_{t}\left(  \xi\right)  =x_{0}+\int_{0}^{t}k_{1}\left(  \xi-s\right)
	b\left(  s,X_{s}\right)  \dd s
	+\int_{0}^{t}k_{2}\left(  \xi-s\right)  \dd W_{s},\quad \xi\in [t,T].
	\]
	Define the $H-$valued stochastic process $\left(  w_{t}\right)
	_{t\in\left[  0,T\right]  }$ by setting, for each $t\in\left[  0,T\right]  $,%
	\begin{align}\label{mettiamola}
		w_{t}\left(  \xi\right) =\begin{cases}
			X_{\xi},&\xi\leq t,\\
			\theta_{t}\left(  \xi\right),&\xi>t.
		\end{cases} 
	\end{align}
	Then
	$\left(  w_{t}\right)  _{t\in\left[  0,T\right]  }$ is a solution of \eqref{SDE Hilbert} with $\phi\in H$ being the function identically equal to $x_{0}$.
\end{prop}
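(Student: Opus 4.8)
The plan is to verify the defining identity \eqref{SDE Hilbert} for the process $(w_t)$ directly, by evaluating both sides pointwise in the variable $\xi\in[0,T]$ and matching the outcome against the definition \eqref{mettiamola}. First I would record the preliminary facts that make every term meaningful. The key observation is that $w_s(s)=X_s$ for each $s$: indeed, from \eqref{mettiamola} the value at $\xi=s$ is read off the first branch, and moreover $\theta_s(s)=X_s$ by \eqref{SDE Volterra} evaluated at time $s$, so the two branches agree at $\xi=s$ and $w_s$ is a genuine continuous path in $\Lambda$. Continuity of $\theta_s$ on $(s,T]$ holds because for $\xi>s$ and $r\in[0,s]$ the arguments $\xi-r$ stay bounded away from zero, where $k_1$ and $k_2$ are continuous. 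Together with the local square–integrability of $k_1$, the linear growth of $b$, and the moment bounds on $X$, this also shows that $s\mapsto B(s,w_s)$ is Bochner integrable on $[0,t]$ with values in $H$, and that $(w_t)$ is $\mathbb{F}$–adapted with values in $H$.

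Next I would compute the deterministic contribution. Since $w_s(s)=X_s$, definition \eqref{B} gives $[B(s,w_s)](\xi)=k_1(\xi-s)\,1_{\{s<\xi\}}\,b(s,X_s)$, whence, for a.e.\ $\xi$,
\[
\Big[\phi+\int_0^t B(s,w_s)\,\dd s\Big](\xi)=x_0+\int_0^t k_1(\xi-s)\,1_{\{s<\xi\}}\,b(s,X_s)\,\dd s.
\]

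The heart of the argument, and the step I expect to be the main obstacle, is the identification of the $H$–valued stochastic integral $\Sigma_{0,t}=\int_0^t\sigma(s)\,\dd W_s$ with the pointwise family of scalar stochastic integrals $\xi\mapsto\int_0^t k_2(\xi-s)\,1_{\{s<\xi\}}\,\dd W_s$. I would settle this by a stochastic Fubini argument: testing $\Sigma_{0,t}$ against an arbitrary $h\in H$ and using the identity $\langle\Sigma_{0,t},h\rangle_H=\int_0^t\langle\sigma(s)^\ast h,\dd W_s\rangle_{\mathbb{R}^d}$, where from \eqref{kernel} one computes $\sigma(s)^\ast h=\int_0^T k_2(\xi-s)\,1_{\{s<\xi\}}\,h(\xi)\,\dd\xi$, and then exchanging the $\dd\xi$–integral with the Itô integral in $\dd W_s$. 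The integrability $\int_0^T\norm{\sigma(s)}^2_{\text{HS}}\,\dd s<\infty$, recorded before \eqref{iso}, guarantees the applicability of the stochastic Fubini theorem. Since the resulting equality of inner products holds for every $h\in H$, the two elements coincide in $H$, $\mathbb{P}$–a.s.

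Finally I would assemble the three contributions and conclude by a case analysis on $\xi$. For $\xi>t$ the indicator $1_{\{s<\xi\}}$ is identically one on $[0,t]$, so the right–hand side reduces to $x_0+\int_0^t k_1(\xi-s)b(s,X_s)\,\dd s+\int_0^t k_2(\xi-s)\,\dd W_s=\theta_t(\xi)=w_t(\xi)$. For $\xi\le t$ the indicators truncate each integral to $[0,\xi)$, so the right–hand side equals $x_0+\int_0^\xi k_1(\xi-s)b(s,X_s)\,\dd s+\int_0^\xi k_2(\xi-s)\,\dd W_s$, which is precisely $X_\xi=w_t(\xi)$ by \eqref{SDE Volterra} evaluated at time $\xi$. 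As the identity holds for a.e.\ $\xi$ and every fixed $t$, $\mathbb{P}$–a.s., this shows that $(w_t)_{t\in[0,T]}$ solves \eqref{SDE Hilbert} with $\phi\equiv x_0$.
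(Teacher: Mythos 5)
Your core argument --- identifying the drift term through the observation $w_s(s)=X_s$ and the definition \eqref{B}, identifying the $H$-valued stochastic integral $\Sigma_{0,t}$ with the pointwise integrals $\int_0^t k_2(\xi-s)1_{\{s<\xi\}}\,\dd W_s$ by testing against $h\in H$ and applying the stochastic Fubini theorem, then concluding by the case analysis $\xi\le t$ versus $\xi>t$ --- is exactly the route the paper takes. One gloss worth fixing: the duality argument gives, for each \emph{fixed} $h$, an equality $\mathbb{P}$-a.s., with the exceptional null set depending on $h$; to conclude that the two $H$-valued random variables coincide a.s.\ you must exploit the separability of $H$ (run the argument over a countable dense family of $h$'s), as the paper does explicitly.

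There is, however, a genuine gap in your preliminary step. You justify the continuity of $\theta_t$ on $(t,T]$ by noting that for $\xi>t$ and $r\in[0,t]$ the arguments $\xi-r$ stay away from zero, where $k_1$ and $k_2$ are continuous. That reasoning is valid for the Lebesgue term $\int_0^t k_1(\xi-r)b(r,X_r)\,\dd r$, where dominated convergence applies pathwise, but it is not a valid argument for the stochastic term $\xi\mapsto\int_0^t k_2(\xi-r)\,\dd W_r$: an It\^o integral is not defined $\omega$-by-$\omega$, so continuity of the integrand in the parameter $\xi$ cannot be passed through the integral. For each fixed $\xi$ the integral is defined only up to a $\xi$-dependent null set, and continuity of the kernel yields merely mean-square continuity in $\xi$, not an a.s.\ continuous version. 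This is not cosmetic: without selecting a jointly measurable (e.g.\ continuous) version of this family, $\theta_t$ --- and hence $w_t$ in \eqref{mettiamola} --- is not well defined as an $H$-valued random variable, and the adaptedness of $(w_t)_{t\in[0,T]}$ (which the paper gets from path continuity together with pointwise $\mathcal{F}_t$-measurability, via \cite[Proposition 3.18]{DP}) is also left unsupported. The paper closes exactly this hole by invoking the Kolmogorov--Chentsov criterion, which applies here because the Gaussian increments satisfy a bound of the type $\mathbb{E}\bigl[|\cdot|^2\bigr]\le C|\xi-\xi'|^{2\alpha-1}$ (cf.\ the computation \eqref{ancora} in Lemma \ref{lemma_time_stint}), so that higher moments give H\"older-continuous modifications. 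Alternatively, since $k_2$ is smooth away from the origin, you could integrate by parts, writing $\int_0^t k_2(\xi-r)\,\dd W_r = k_2(\xi-t)W_t + \int_0^t k_2'(\xi-r)W_r\,\dd r$ for $\xi>t$, whose right-hand side is pathwise defined and manifestly continuous in $\xi$; but some such device is required, and your proposal as written supplies none.
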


\begin{proof}
	Fix $t\in[0,T]$. Note that, by the Kolmogorov–Chentsov continuity criterion, there exists a continuous version of the stochastic process $(\int_{0}^{t}k_2(\xi-s)\,\dd W_s)_{\xi \in [t,T]}.$ Hence, also employing the dominated convergence theorem, we deduce that the process $\theta_t$ has continuous trajectories $\theta_t(\cdot)$ in $[t,T]$. It follows that $w_t$ defined in \eqref{mettiamola} takes values in $H$. \\In addition, by \cite[Proposition 3.18]{DP}, we observe that  $w_t$  is an $\mathcal{F}_t-$measurable random variable, because $X$ is continuous and $\mathbb{{F}}-$adapted, $\theta_t(\cdot)$ is continuous and $\theta_t(\xi)$ is $\mathcal{F}_t-$measurable for every $\xi \in [t,T]$. Thus, the $H-$valued stochastic process  $(w_t)_{t\in[0,T]}$  is $\mathbb{F}-$adapted.
	
	We now want to prove that $w_t$ satisfies \eqref{SDE Hilbert}. By \eqref{SDE Volterra} and the definition of $\theta_t$,  we have, $\mathbb{{P}}-$a.s.,
	\begin{align}\label{abstr_1}
		\notag w_{t}\left(  \xi\right)   & =X_\xi1_{\{\xi\le t\}}+ \theta_t(\xi)1_{\{\xi> t\}}=
		x_{0}+\int_{0}^{t\wedge\xi}k_{1}\left(
		\xi-s\right)  b\left(  s,X_{s}\right)  \dd s+\int_{0}^{t\wedge\xi}k_{2}\left(
		\xi-s\right)  \dd W_{s}\\
		& =x_{0}+\int_{0}^{t}k_{1}\left(  \xi-s\right)  1_{\left\{  \xi> s\right\}
		}b\left(  s,X_{s}\right)  \dd s
		+\int_{0}^{t}k_{2}\left(  \xi-s\right)
		1_{\left\{  \xi> s\right\}  }\dd W_{s},\quad \xi \in [0,T].
	\end{align}
	We focus on the integral in $\dd W$, with the aim of understanding its relation with $\Sigma_{0,t}=\int_{0}^{t}\sigma(s)\dd W_s$, see \eqref{serve_rem}. By \eqref{kernel} and  \cite[Proposition 4.30]{DP}, 
	\begin{equation*}
		\left\langle \int_{0}^{t}\sigma\left(s\right) \dd W_{s},h\right\rangle_{H}
		=
		\int_{0}^{t}\left(\int_{0}^{T} k_2(\xi-s)1_{\{\xi >s\}}h(\xi)\,\dd\xi\right)^\top\dd W_s
		,\quad \mathbb{P}-\text{a.s., for every }h\in H.
	\end{equation*}
	Moreover, an application of the stochastic Fubini's theorem yields
	\begin{align*}
		\left\langle \int_{0}^{t}k_2(\cdot- s)1_{\{\cdot>s\}}\dd W_s,\,h \right\rangle_{H}
		&=
		\int_{0}^{T}\left(\int_{0}^{t} k_2(\xi-s)1_{\{\xi >s\}}\,\dd W_s\right)^\top h(\xi)\,\dd \xi
		\\&=
		\int_{0}^{t}\left(\int_{0}^{T} k_2(\xi-s)1_{\{\xi >s\}}h(\xi)\,\dd\xi\right)^\top\!\!\dd W_s,\quad \mathbb{P}-\text{a.s., for every }h\in H.
	\end{align*}
	Considering that $H$ is separable, combining the two previous equations  we deduce that 
	\[
	\left\langle \int_{0}^{t}\sigma\left(s\right) \dd W_{s},h\right\rangle_{H}=\left\langle \int_{0}^{t}k_2(\cdot- s)1_{\{\cdot>s\}}\dd W_s,\,h \right\rangle_{H},\quad h\in {H},\,\mathbb{P}-\text{a.s.,}
	\]
	which in turn implies that 
	\begin{equation}\label{st_integral}
		\left(\int_{0}^{t}\sigma\left(s\right) \dd W_{s}\right)(\xi)=
		\int_{0}^{t}k_2(\xi- s)1_{\{\xi>s\}}\dd W_s
		,\quad \text{for a.e. }\xi \in [0,T],\, \mathbb{P}-\text{a.s.}
	\end{equation}
	Going back to \eqref{abstr_1}, recalling  the definition of $B$ in \eqref{B} and denoting by $\phi\in H$  the function identically equal to $x_0$, by the standard properties of Bochner's integral we conclude that
	\[
	w_t=
	\phi +\int_{0}^{t}B\left(  s,w_s\right) \dd s+\int_{0}^{t}\sigma(s)\,\dd W_{s},\quad \mathbb{P}-\text{a.s.}
	\]
	Therefore $(w_t)_{t\in [0,T]}$ satisfies \eqref{SDE Hilbert}, completing the proof.
\end{proof}
The previous proposition gives us the classical infinite dimensional reformulation of the Volterra SDE \eqref{SDE Volterra}, quoted by
Equation (\ref{SDE Hilbert}) in  Introduction \ref{intro}. However, for the procedure carried out in Section \ref{sec_abstract}, it turns out that a second reformulation is more convenient.
\begin{prop}\label{pro2}
	Let $\left(  X_{t}\right)  _{t\in\left[  0,T\right]  }$ be a solution
	of \eqref{SDE Volterra} and $\phi\in H$ be the function identically equal to $x_0$. Let $\theta_{t}\left(  \xi\right)  $ and
	$w_{t}\left(  \xi\right)  $ be defined as in Proposition \ref{pr1}. Then,
	for every $t\in\left[  0,T\right]  $, the following identity holds:
	\begin{equation}\label{abstr2}
		w_{t}=\phi+\int_{0}^{t}B\left(  s,w_{t}\right)  \dd s+\int_{0}^{t}\sigma(s)\,\dd W_{s},\quad \mathbb{P}-\text{a.s.}
	\end{equation}
\end{prop}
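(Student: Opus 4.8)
The plan is to deduce \eqref{abstr2} directly from Proposition~\ref{pr1} by showing that, for the process $(w_t)_{t\in[0,T]}$ constructed there, the two drift integrands agree: $B(s,w_t)=B(s,w_s)$ in $H$ for every $s\in[0,t]$. The heuristic reason is that the operator $B(s,\cdot)$ in \eqref{B} reads off its path argument only at the single instant $s$, namely $[B(s,w)](\xi)=k_1(\xi-s)\,1_{\{s<\xi\}}\,b(s,w(s))$; hence comparing $B(s,w_t)$ with $B(s,w_s)$ amounts to comparing the values $w_t(s)$ and $w_s(s)$. Although \eqref{abstr2} looks genuinely different from \eqref{SDE Hilbert} (the drift is frozen at the terminal time $t$ rather than at the running time $s$), these two drifts coincide along this particular solution.

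First I would record that $w_t$ is an admissible argument for $B(s,\cdot)$, i.e.\ that almost surely $w_t\in\Lambda=C([0,T];\mathbb{R}^d)$. This is already contained in the proof of Proposition~\ref{pr1}: $X$ has continuous trajectories, $\theta_t(\cdot)$ is continuous on $[t,T]$, and the two pieces match at $\xi=t$ since $\theta_t(t)=X_t$ by \eqref{SDE Volterra}. Consequently $w_t(s)$ is well defined for all $s\in[0,T]$.

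The key step is then immediate from the definition \eqref{mettiamola}. For every $s\in[0,t]$ we have $w_t(s)=X_s$, because the condition $s\le t$ selects the first branch of \eqref{mettiamola}; the same branch gives $w_s(s)=X_s$. Therefore $b(s,w_t(s))=b(s,X_s)=b(s,w_s(s))$, and substituting into \eqref{B} yields, for every $\xi\in[0,T]$,
\[
[B(s,w_t)](\xi)=k_1(\xi-s)\,1_{\{s<\xi\}}\,b(s,X_s)=[B(s,w_s)](\xi),\qquad s\in[0,t],
\]
so that $B(s,w_t)=B(s,w_s)$ as elements of $H$ for every $s\in[0,t]$.

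To conclude, I would integrate this identity over $s\in[0,t]$ as a Bochner integral in $H$, giving $\int_0^t B(s,w_t)\,\dd s=\int_0^t B(s,w_s)\,\dd s$, and then invoke Proposition~\ref{pr1}, which asserts exactly $w_t=\phi+\int_0^t B(s,w_s)\,\dd s+\int_0^t\sigma(s)\,\dd W_s$ almost surely; replacing the drift integral produces \eqref{abstr2}. There is no real analytic difficulty: the entire content is the observation that $w_t$ and $w_s$ coincide at the evaluation point $s$ whenever $s\le t$, so that ``freezing the path at the terminal time'' leaves the drift unchanged, the kernel $k_1(\xi-s)$ merely redistributing this common value across the variable $\xi$. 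The only point deserving a word of care is the measurability and integrability needed to form the Bochner integral, both of which are inherited from the construction in Proposition~\ref{pr1}.
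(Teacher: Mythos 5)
Your proof is correct and takes essentially the same route as the paper: the paper's own (one-line) proof consists precisely of the observation that $\left[B\left(s,w_{t}\right)\right]\left(\xi\right)=k_{1}\left(\xi-s\right)1_{\left\{\xi>s\right\}}b\left(s,X_{s}\right)$ for $s\le t$ — i.e., that freezing the path at time $t$ leaves the drift integrand unchanged because $w_{t}\left(s\right)=X_{s}=w_{s}\left(s\right)$ — after which it reruns the argument of Proposition~\ref{pr1}. Your only deviation, invoking the conclusion of Proposition~\ref{pr1} as a black box and substituting the equal Bochner integrals instead of repeating its proof, is cosmetic.
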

\begin{proof}
	Observing that, for a.e. $\xi \in [0,T]$,
	\[
	\int_{0}^{t}k_{1}\left(  \xi-s\right)  1_{\left\{  \xi> s\right\}
	}b\left(  s,X_{s}\right) \dd s=\int_{0}^{t}B\left(  s,w_{t}\right)  \left(
	\xi\right)  \dd s
	=\left[\int_{0}^{t}B\left(  s,w_{t}\right)    \dd s\right]\left(
	\xi\right),
	\]
	the proof is the same as the one of Proposition \ref{pr1},
\end{proof}
Motivated by the infinite dimensional reformulation  of Proposition \ref{pro2}, in Section \ref{sec_abstract} we focus on studying Equation \eqref{abstr2}. Our aim is to investigate the property of its solutions and the associated Kolmogorov equation, which is the subject of Section \ref{sec_KO}.  However, the implementation of this plan is challenging, due to the particular structure of the drift function $B\colon [0,T] \times \Lambda\to H$. More precisely, the issue with  the expression of $B$ in \eqref{B} is that it is meaningful only for continuous functions, as it involves a punctual evaluation. Consequently, unlike the classical case, the functional space $\Lambda$  in the domain of $B$ is different from the arrival Hilbert space $H$. This requires an abstract formulation of the problem that, to the best of our knowledge, is not covered by the existing literature.

\section{Abstract formulation and differentiability of the stochastic flow}\label{sec_abstract}
In this section, we introduce and study an abstract formulation for the equation \eqref{abstr2}, with a particular attention devoted to the differentiability of its solution with respect to the initial data, see Subsections \ref{first_order}-\ref{second_order}. In our reasoning, we introduce an extension of the drift operator $B$, denoted by $\widebar B$, which is a characterizing and original feature of the approach that we propose.

For every $k,\,p\in\mathbb{N}$, we denote by $\norm{\cdot}_{p}$ the usual norm on the Banach space $L^p\big(0,T;\mathbb{R}^k\big)$. We denote by  
\[
\text{$H_{\ts}$ the Hilbert space  $L^2\big(\left(0,T\right)\times \left(0,T\right);\mathbb{R}^d\big)$ endowed with the norm $\norm{\cdot}_{2,\ts}$.}
\]
  Recall  $H=L^2\big(0,T;\mathbb{R}^d\big)$ and  $\Lambda=C\left(\left[0,T\right];\mathbb{R}^d\right)$. For every $w\in\Lambda$, we consider a map $B\left(w\right)\colon\left[0,T\right]\times [0,T]\to \mathbb{R}^d$ subject to the next requirement.
\begin{ass}\label{B_1}
	The function $B\colon\Lambda\to H_{\ts}$ satisfies
	\begin{equation}\label{bound_B}
		\norm{B\left(w_1\right)}_{2,\ts}\le C_0\left(1+\norm{w_1}_2\right),\qquad
		\norm{B\left(w_1\right)-B\left(w_2\right)}_{2,\ts}\le C_0\norm{w_1-w_2}_2,
	\end{equation}
	for every $w_1,\,w_2\in \Lambda,$ for some constant $C_0=C_0\left(d,T\right)>0$. \\
	Moreover, given $w\in\Lambda$ and $0< t \le T$, for a.e. $r\in\left(0,t\right)$ the function $B\left(w\right)\left(r,\cdot\right)\in H$ is of Volterra--type, namely $B\left(w\right)\left(r,\xi\right)=0$ for a.e. $\xi\in\left(0,r\right)$, and depends on $w$ only via its restriction ${w}|_{\left(0,t\right)}$ to $\left(0,t\right)$.
\end{ass}
In the sequel, we are going to progressively introduce stricter hypotheses on the drift map $B$ (see, in particular, Assumptions \ref{B_2}-\ref{B_3}), which will  allow us to prove the main result on the Kolmogorov equation, see Theorem \ref{Kolm_thm} in Section \ref{sec_KO}. In Example \ref{ex1}, we show a function $B$, obtained by choosing $b$ in \eqref{B} with an affine structure, that satisfies these requirements.

Under Assumption \ref{B_1}, we can invoke the theorem of extension of uniformly continuous functions to uniquely define a continuous map $\widebar{B}\colon H\to H_{\ts}$ such that $\widebar{B}\big|_{\Lambda}=B$. Note that $\widebar{B}$ satisfies \eqref{bound_B} for every $w_1,\,w_2\in H$. Given $w\in H$ and $r\in\left(0,T\right)$, we are going to write $\widebar{B}\left(r,w\right)=\widebar{B}\left(w\right)\left(r,\cdot\right)\in H$: these maps are well defined for a.e. $r\in\left(0,T\right)$. \\
For a fixed $0<t\le T$, we remark that also $\widebar{B}\left(r,w\right)$ is of Volterra--type in the sense of Assumption \ref{B_1} for a.e. $r\in\left(0,t\right)$, and that it depends on $w$ only via $w|_{\left(0,t\right)}$. For these reasons, in the sequel we will refer to Assumption \ref{B_1} while talking about $\widebar{B}.$

Recall the spaces $\mathcal{H}^q=L^q\big(\Omega;H\big),\,q\ge 2$, and the subspaces $\mathcal{H}^q_t\subset \mathcal{H}^q$ of $\mathcal{F}_t-$measurable functions introduced in Section \ref{section abstract}, as well as  the random variables $\Sigma_{s,t}\in\mathcal{H}_t^q$ in \eqref{serve_rem}.
 For every $0\le s \le t \le T$ and $\phi\in \mathcal{H}^q$, we are interested in the  equation
	\begin{equation}\label{eq_B1}
	w=\phi+\int_{s}^{t}\widebar{B}\left(r, w\right)\dd r
	+
	\int_{s}^{t}\sigma\left(r\right)\dd W_r,
\end{equation}
whose well--posedness in $\mathcal{H}^q$ is given by the next result.
\begin{theorem}\label{well_pos}
	Under Assumption \ref{B_1}, for every $q\ge 2$, $\phi\in \mathcal{H}^q$ and $s,t\in\left[0,T\right]$, with $s\le t$, there exists a unique solution $w^{s,\phi}_t\in \mathcal{H}^q$ of \eqref{eq_B1}. In particular, if $\phi\in \mathcal{H}^q_s$ then $w^{s,\phi}_t\in \mathcal{H}^q_t$.

Furthermore, the following cocycle property holds in $\mathcal{H}^q$:
\begin{equation}\label{cocycle}
	w^{s,\phi}_t=w_t^{u,w^{s,\phi}_u},\quad 0\le s < u < t \le T,\,\phi\in \mathcal{H}^q.
\end{equation}
\end{theorem}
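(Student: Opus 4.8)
The plan is to recast \eqref{eq_B1} as a fixed--point problem in $\mathcal H^q$ and solve it by contraction, using the Volterra structure of $\widebar B$ to pass from short to arbitrary time intervals. For fixed $s\le t$ and $\phi$, define $\mathcal T\colon \mathcal H^q\to\mathcal H^q$ by $\mathcal T(w)=\phi+\int_s^t\widebar{B}(r,w)\dd r+\Sigma_{s,t}$. The first point is that $\mathcal T$ is well defined: writing the $H$--norm explicitly and applying the Cauchy--Schwarz inequality in the variable $r$, together with the Volterra vanishing $\widebar B(r,w)(\xi)=0$ for $\xi<r$, one obtains the elementary but crucial bound
\[
\norm{\int_s^t\widebar B(r,w)\dd r}_H\le\sqrt{t-s}\,\norm{\widebar B(w)}_{2,\ts},
\]
which, combined with the growth estimate in Assumption \ref{B_1} and with \eqref{iso}, shows $\mathcal T(w)\in\mathcal H^q$. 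The same bound applied to the Lipschitz estimate in \eqref{bound_B} yields $\norm{\mathcal T(w_1)-\mathcal T(w_2)}_{\mathcal H^q}\le C_0\sqrt{t-s}\,\norm{w_1-w_2}_{\mathcal H^q}$.

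Consequently, whenever $C_0\sqrt{t-s}<1$ the map $\mathcal T$ is a contraction, and Banach's fixed--point theorem produces a unique $w^{s,\phi}_t\in\mathcal H^q$ solving \eqref{eq_B1}. When $\phi\in\mathcal H^q_s$ I would read off adaptedness from the Picard iterates $w^{(0)}=\phi$, $w^{(n+1)}=\mathcal T(w^{(n)})$: each iterate lies in $\mathcal H^q_t$, since $\phi\in\mathcal H^q_s\subset\mathcal H^q_t$, $\Sigma_{s,t}\in\mathcal H^q_t$, and the Bochner integral of the $\mathcal F_t$--measurable, $H$--valued map $r\mapsto\widebar B(r,w^{(n)})$ is again $\mathcal F_t$--measurable; as $\mathcal H^q_t$ is closed in $\mathcal H^q$, the limit $w^{s,\phi}_t$ inherits $\mathcal F_t$--measurability.

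To remove the smallness restriction $C_0\sqrt{t-s}<1$ I would patch solutions across a partition $s=t_0<\dots<t_N=t$ of mesh below $C_0^{-2}$. The key patching lemma is: if $w$ solves \eqref{eq_B1} on $[s,u]$ with datum $\phi$ and $v$ solves it on $[u,t]$ with datum $w$, then $v$ solves \eqref{eq_B1} on $[s,t]$ with datum $\phi$. Indeed, for a.e. $\xi\in(0,u)$ the contributions added on $[u,t]$ vanish, because $\widebar B(r,v)(\xi)=0$ and, by \eqref{kernel}, $\sigma(r)$ maps into functions that vanish at $\xi$ whenever $r\ge u>\xi$; hence $v|_{(0,u)}=w|_{(0,u)}$. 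Since $\widebar B(r,\cdot)$ depends only on the restriction to $(0,u)$ for a.e. $r\in(0,u)$, this forces $\widebar B(r,v)=\widebar B(r,w)$ for a.e. $r\in(s,u)$, and summing the two defining identities with $\Sigma_{s,u}+\Sigma_{u,t}=\Sigma_{s,t}$ turns the patched relation into \eqref{eq_B1} on $[s,t]$. Iterating over the partition yields existence on any $[s,t]$; uniqueness follows by the reverse operation, restricting an arbitrary solution to $(0,t_1)$, where by the Volterra vanishing it satisfies the short--interval equation and is thus uniquely determined, and then peeling off successive subintervals. Adaptedness propagates through the partition because each $w^{t_k,\cdot}_{t_{k+1}}$ is $\mathcal F_{t_{k+1}}$--measurable.

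Finally, the cocycle property \eqref{cocycle} is an immediate corollary of the patching lemma and uniqueness: for $s<u<t$, the random variable $w^{u,w^{s,\phi}_u}_t$ solves \eqref{eq_B1} on $[s,t]$ with datum $\phi$, hence coincides with $w^{s,\phi}_t$. The main obstacle is precisely this passage to arbitrary time intervals: the contraction constant $C_0\sqrt{t-s}$ cannot be forced below $1$ in general, and the usual Picard estimates do not recover a factorial gain here, because $\widebar B$ is evaluated at the whole unknown $w$ rather than along an evolution. Overcoming this rests entirely on the causal (Volterra) structure encoded in Assumption \ref{B_1}, and checking that the patched process genuinely solves the equation — the very computation that simultaneously establishes the cocycle — is the technical heart of the argument.
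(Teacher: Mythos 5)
Your proposal is correct and follows essentially the same route as the paper's proof: a Banach fixed--point argument on subintervals of length below $C_0^{-2}$, followed by patching across the partition using the Volterra--type vanishing of $\widebar{B}$ and $\sigma$ together with the dependence of $\widebar{B}(r,\cdot)$ only on the restriction to $(0,r)$, with uniqueness obtained by restricting an arbitrary solution to $(0,t_1)$ and peeling off subintervals, and the cocycle property read off from this patching--plus--uniqueness argument. The only (harmless) deviations are that you run the contraction on all of $\mathcal{H}^q$ and recover adaptedness from the Picard iterates and the closedness of $\mathcal{H}^q_t$, whereas the paper works directly in the adapted subspace, and that the basic estimate $\norm{\int_s^t\widebar{B}(r,w)\,\dd r}_H\le\sqrt{t-s}\,\norm{\widebar{B}(w)}_{2,\ts}$ needs only Bochner's theorem and Cauchy--Schwarz, not the Volterra vanishing you invoke for it.
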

\begin{proof}
	Fix $q\ge 2$, $0\le s\le t\le T$ and $\phi\in \mathcal{H}^q_s$. 
Consider $N=N\left(d,T\right)\in \mathbb{N}$ so big that $C_0\sqrt{T/N}<1,$ where $C_0=C_0\left(d,T\right)$ is the constant in \eqref{bound_B}.  Let us introduce an equispaced partition $\{t_k\}_{k=0}^N$ of $\left[s,t\right]$ where $t_0=s$ and $t_N=t$: its mesh $\Delta\le T/N$. Define the mapping  $\Gamma_{t_0}^{t_1}\colon \mathcal{H}^q_{t_1}\to \mathcal{H}^q_{t_1}$ by 
	\begin{equation}\label{fix_point_B}
		\Gamma_{t_0}^{t_1} w=\phi+\int_{t_0}^{t_1} \widebar{B}\left(r, w\right)\dd r+
		\int_{t_0}^{t_1}\sigma\left(r\right)\dd W_r
		,\quad w\in \mathcal{H}^q_{t_1}.
	\end{equation}
	Under Assumption \ref{B_1}, $\Gamma_{t_0}^{t_1}$ is well defined. Indeed, for every $w\in \mathcal{H}^q_{t_1}$,
	\begin{align*}
		\norm{\Gamma_{t_0}^{t_1}w}_{\mathcal{H}^q}^q&=\mathbb{E}\left[\norm{\Gamma_{t_0}^{t_1}w}^q_2\right]\le3^{q-1}\mathbb{E}\left[ \norm{\phi}^q_2+\Bigg(\int_{t_0}^{t_1}\left(\int_{0}^{T}\left|\widebar{B}\left(w\right)\left(r,\xi\right)\right|^2\dd\xi\right)^{\frac{1}{2}}\dd r\Bigg)^q+\norm{\int_{t_0}^{t_1}\sigma\left(r\right)\dd W_r}_2^q\right]\\
		&\le 3^{q-1}\mathbb{E}\left[\norm{\phi}^q_2+C_0^q{\Delta}^\frac{q}{2}\left(1+\norm{w}_2\right)^q+\norm{\int_{t_0}^{t_1}\sigma\left(r\right)\dd W_r}_2^q\right]<\infty,
	\end{align*}
	where we use Bochner's theorem in the first inequality  and the first bound in \eqref{bound_B}, coupled with Jensen's inequality, in the second one. Analogously, using the second inequality in \eqref{bound_B}, we write
		\begin{multline}\label{contr_1}
	\norm{\Gamma_{t_0}^{t_1} w_1-\Gamma_{t_0}^{t_1} w_2}_{\mathcal{H}^q}\le \mathbb{E}\left[\Bigg(\int_{t_0}^{t_1}\left(\int_{0}^{T}\left|\widebar{B}\left(w_1\right)-\widebar{B}\left(w_2\right)\right|^2\left(r,\xi\right)\dd\xi\right)^{\frac{1}{2}}\dd r\Bigg)^q\right]^{\frac{1}{q}}\\
	\le C_0\sqrt{\Delta}\norm{w_1-w_2}_{\mathcal{H}^q},\quad w_1,\,w_2\in \mathcal{H}^q_{t_1}.
	\end{multline}
	Hence, for our choice of $N\in \mathbb{N}$, the map $\Gamma_{t_0}^{t_1}$ is a contraction in $\mathcal{H}^q_{t_1}$, whose unique fixed point is $\widebar{w}_1$. Noting that $\widebar{w}_1$ is the unique solution of \eqref{eq_B1} with $t_1$ instead of $t$, we denote it by ${w}^{s,\phi}_{t_1}$.

	Since the relation between constants in \eqref{contr_1}, which is necessary to make  $\Gamma_{t_0}^{t_1}$ a contraction, does not depend on the initial condition, under Assumption \ref{B_1} the previous argument can be iterated to construct the solution $w^{s,\phi}_{t}$ of \eqref{eq_B1}. More precisely, define the map $\Gamma_{t_1}^{t_2}\colon \mathcal{H}^q_{t_2}\to \mathcal{H}^q_{t_2}$ by
	\begin{equation*}
	\Gamma_{t_1}^{t_2} w=\widebar{w}_1+\int_{t_1}^{t_2} \widebar{B}\left(r, w\right)\dd r+\int_{t_1}^{t_2}\sigma\left(r\right)\dd W_r,\quad w\in \mathcal{H}^q_{t_2}.
	\end{equation*}
	Computations similar to those above show that $\Gamma_{t_1}^{t_2}$ is well defined. Moreover,
	\begin{equation*}
	\norm{\Gamma_{t_1}^{t_2} w_{1}-\Gamma_{t_1}^{t_2} w_{2}}_{\mathcal{H}^q}\le C_0\sqrt{\Delta}
	\norm{w_1-w_2}_{\mathcal{H}^q},\quad w_1,\,w_2\in \mathcal{H}^q_{t_2}.
	\end{equation*}
	Thus,  $\Gamma_{t_1}^{t_2}$ is a contraction in $\mathcal{H}^q_{t_2}$, whose unique fixed point is $\widebar{w}_{2}={w}^{t_1,{w}^{s,\phi}_{t_1}}_{t_2}$. 
	Now, by the Volterra--type property of $\widebar{B}$ and $\sigma$, together with the standard features of the Bochner's and stochastic integrals (see \eqref{st_integral}), we infer that 
	\begin{equation}\label{sotto}
		\left(\int_{t_1}^{t_2} \widebar{B}\left(r, \widebar{w}_{2}\right)\dd r\right)\left(\xi\right)= \left(\int_{t_1}^{t_2}\sigma\left(r\right)\dd W_r\right)\left(\xi\right)
		=0,\quad \text{for a.e. } \xi \in(0,t_1),\,\mathbb{P}-\text{a.s.,}
	\end{equation}
	whence 
	\[
		\restr{\widebar{w}_{2}}{\left(0,t_1\right)}=\restr{\widebar{w}_{1}}{\left(0,t_1\right)},\quad \mathbb{P}-\text{a.s.}
	\] 
	Furthermore, $\mathbb{P}-$a.s., for a.e. $r\in\left(s,t_1\right)$, $\widebar{B}\left(r, \widebar{w}_{1}\right)$ depends on $\widebar{w}_{1}$ only via $\restr{\widebar{w}_{1}}{\left(0,r\right)}$, which yields
	\begin{equation}\label{sotto1}
		\widebar{B}\left(r, \widebar{w}_{1}\right)=\widebar{B}\left(r, \widebar{w}_{2}\right),\quad \text{for a.e. }r\in (s,t_1),\,\mathbb{P}-\text{a.s.}
	\end{equation}
	Therefore, recalling \eqref{fix_point_B},
	\begin{equation}\label{mett}
	\widebar{w}_{2}=\phi+\int_{s}^{t_1}\widebar{B}\left(r,\widebar{w}_{1}\right)\dd r+ \int_{t_1}^{t_2}\widebar{B}\left(r,\widebar{w}_{2}\right)\dd r
	+\int_{s}^{t_2}\sigma\left(r\right)\dd W_r
	=\phi+\int_{s}^{t_2}\widebar{B}\left(r,\widebar {w}_{2}\right)\dd r+\int_{s}^{t_2}\sigma\left(r\right)\dd W_r.
	\end{equation}
	This shows that $\widebar{w}_{2}$ is a solution of \eqref{eq_B1} with $t_2$ instead of $t$.	\\ 
	To prove that $\widebar{w}_2$ is in fact the unique solution of this equation, we consider another random variable $\widetilde{w}\in\mathcal{H}^q_{t_2}$ satisfying \eqref{mett}. Then, relying on the same properties of $\widebar{B}$ and $\sigma$ as those used above, we deduce that
	\begin{equation}\label{uni}
		1_{(0,t_1)}\widetilde{w}=1_{(0,t_1)}\left(\phi+\int_{s}^{t_1}\widebar{B}\left(r,1_{(0,t_1)}\widetilde{w}\right)\dd r
		+\int_{s}^{t_1}\sigma\left(r\right)\dd W_r\right).
	\end{equation}
	  Moreover, we observe that also $1_{(0,t_1)}\widebar{w}_1\in\mathcal{H}^q$ satisfies \eqref{uni}. Therefore, using Bochner's theorem and Jensen's inequality, by Assumption \ref{B_1} we can compute
	  	\begin{align*}
	  	\norm{1_{(0,t_1)}\left(\widebar{w}_1-\widetilde{w}\right)}_{\mathcal{H}^q}^q&\le 	\mathbb{E}\left[\norm{\int_{s}^{t_1}\left(\widebar{B}\left(r, 1_{(0,t_1)}\widebar{w}_1\right)-\widebar{B}\left(r, 1_{(0,t_1)}\widetilde{w}\right)\right)\dd r}_2^q\right]
	  			\\&\le 
	  	\mathbb{E}\left[\left(\int_{s}^{t_1}\norm{\widebar{B}\left(r, 1_{(0,t_1)}\widebar{w}_1\right)-\widebar{B}\left(r, 1_{(0,t_1)}\widetilde{w}\right)}_2\dd r\right)^q\right]
	  	\\
	  	&\le 
	  	\Delta^{\frac{q}{2}}\mathbb{E}\left[\norm{\widebar{B}\left(1_{(0,t_1)}\widebar{w}_1\right)-\widebar{B}\left(1_{(0,t_1)}\widetilde{w}\right)}^q_{2,\ts}\right]\le \Delta^{\frac{q}{2}}C_0^q\norm{1_{(0,t_1)}\left(\widebar{w}_1-\widetilde{w}\right)}_{\mathcal{H}^q}^q,
	  \end{align*} 
	 which allow us to conclude, recalling that $\sqrt{\Delta}C_0<1$,
	  \[
	  	1_{(0,t_1)}\widetilde{w}=1_{(0,t_1)}\widebar{w}_1,\quad \mathbb{P}-\text{a.s.}
	  \]
	  Going back to \eqref{mett}, by \eqref{fix_point_B} and the previous equality we have, $\mathbb{P}-$a.s.,
	  	\begin{equation*}
	  	\widetilde{w}=\phi+\int_{s}^{t_1}\widebar{B}\left(r,\widebar{w}_{1}\right)\dd r
	  	+\int_{s}^{t_1}\sigma\left(r\right)\dd W_r
	  	+ \int_{t_1}^{t_2}\widebar{B}\left(r,\widetilde{w}\right)\dd r
	  	+\Sigma_{t_1,t_2}
	  	=
	  	\widebar{w}_1+ \int_{t_1}^{t_2}\widebar{B}\left(r,\widetilde{w}\right)\dd r
	  	+\Sigma_{t_1,t_2}.
	  	\end{equation*}
  	It follows that $\widetilde{w}$ is a fixed point of the map $\Gamma_{t_1}^{t_2}$ in $\mathcal{H}_{t_2}^{q}$: by uniqueness, we obtain $\widetilde{w}=\widebar{w}_2$. Hence $\widebar{w}_2$ is the unique solution of \eqref{eq_B1} with $t_2$ instead of $t$, which we denote by $w_{t_2}^{s,\phi}$.
  	
	This argument by steps can be repeated to cover the whole interval $\left[s,t\right]$. In this way, we obtain the unique solution $w^{s,\phi}_t$of  \eqref{eq_B1} in $\mathcal{H}^q_t$. The same procedure also works when the initial condition $\phi\in \mathcal{H}^q$, i.e., when $\phi$ is not necessarily $\mathcal{F}_s-$measurable. In such a case, it provides a unique solution $w_{t}^{s,\phi}\in\mathcal{H}^q$.
	
	The cocycle property in \eqref{cocycle} follows by a similar reasoning. Indeed, if we fix $u\in\left(s,t\right)$,  then  by the Volterra--type property of $\widebar{B}$ and $\sigma$ (cfr. \eqref{sotto}) we have 
	\begin{equation}\label{flow1}
		\restr{w^{u,w^{s,\phi}_u}_t}{\left(0,u\right)}=\restr{w^{s,\phi}_u}{\left(0,u\right)},\quad \mathbb{P}-\text{a.s.}
				\end{equation}
Invoking again Assumption \ref{B_1} as in \eqref{sotto1},
	\begin{multline*}
		w_t^{u,w^{s,\phi}_u}=
		\phi+\int_{s}^{u}\widebar{B}\left(r, w^{s,\phi}_u\right)\dd r+\int_{u}^{t}\widebar{B}\left(r,w_t^{u,w^{s,\phi}_u}\right) \dd r+\int_{s}^{t}\sigma\left(r\right)\dd W_r
		\\
		=\phi+\int_{s}^{t}\widebar{B}\left(r,w_t^{u,w^{s,\phi}_u}\right) \dd r+\int_{s}^{t}\sigma\left(r\right)\dd W_r,
	\end{multline*}
	hence the equality in \eqref{cocycle} is inferred by the  uniqueness of the solution of \eqref{eq_B1}.
	The proof is now complete.
\end{proof}

\begin{rem}\label{classical}
	The cocycle property in \eqref{cocycle} (see also \eqref{flow1}) yields  $w^{s,\phi}_t\left(\xi\right)=w^{s,\phi}_u\left(\xi\right)$ for a.e. $\xi\in\left(0,u\right)$, $\mathbb{P}-$a.s., for every $0\le s\le u \le t \le T$ and $\phi\in \mathcal{H}^q,\,q\ge2$. 
	\end{rem}

\begin{rem}\label{rem_p}
	For every  $p\in(2,(1-\alpha)ì^{-1})$, the fractional kernel $k_2$ in \eqref{k_f} belongs to the space $L^p\big(0,T;\mathbb{R}\big)$. 
	
	 As a consequence, according to \cite[Lemma $8.27$, Theorem $8.29$]{PZ}, the stochastic integral $\Sigma_{s,t}$ in \eqref{serve_rem} belongs to the space
	  $$\mathcal{L}_t^p= \left(L_t^p\left(\Omega;L^p\right),\norm{\cdot}_{\mathcal{L}^p}\right),\quad \text{ where }\quad L^p=L^p\big(0,T;\mathbb{R}^d\big).$$ 
As before, the subscript $t$ in the previous expression indicates a space of $\mathcal{F}_t-$measurable random variables. Moreover, the following inequality holds (cfr. \eqref{iso}):
	\begin{equation}\label{PZbanach}
		\norm{\Sigma_{s,t}}_{\mathcal{L}^p}\le C_{d,p}\norm{k_2}_p\sqrt{t-s},\quad \text{for some }C_{d,p}>0.
	\end{equation}
	
	We denote by $$\text{$L^p_{\ts}$ the Banach space  $L^p\big(\left(0,T\right)\times \left(0,T\right);\mathbb{R}^d\big)$, endowed with the  norm $\norm{\cdot}_{p,\ts}$}.$$ In addition to Assumption \ref{B_1}, suppose that $B\colon\Lambda\to L^p_{\ts}$  and that it satisfies
		\begin{equation}\label{ass_b_1'}
		\norm{B\left(w_1\right)}_{p,\ts}\le C_{0,p}\left(1+\norm{w_1}_p\right),\qquad
		\norm{B\left(w_1\right)-B\left(w_2\right)}_{p,\ts}\le C_{0,p}\norm{w_1-w_2}_p,
	\end{equation}
for every $w_1,\,w_2\in \Lambda,$ for some constant $C_{0,p}=C_{0,p}(d,T)>0$. 
 Note that $\widebar{B}\colon H\to H_{\ts}$ satisfies \eqref{ass_b_1'} for every $w_1,\,w_2\in L^p$. 
 
 In this framework, one can argue as in the proof of Theorem \ref{well_pos} to infer that, for every $\phi\in \mathcal{L}_s^p$, there exists a unique solution $w^{s,\phi}_t$ of \eqref{eq_B1} belonging to the space $\mathcal{L}_t^p$.
\end{rem}

The following corollary to Theorem \ref{well_pos} gives a Lipschitz–type dependence of the solution $w^{s,\phi}_t$ of \eqref{eq_B1} on the initial condition $\phi$, which combined with \eqref{cocycle} allows to prove the $\mathbb{F}-$Markov property of the process $(w^{s,\phi}_t)_{t\in[s,T]}$.
\begin{cor}\label{lip}
Let $q\ge 2$. Under Assumption \ref{B_1},  there exists a constant $C_1=C_1\left(d, q,T\right)>0$ such that, for every $0\le s <t\le T$,
	 \begin{equation}\label{initial_c}
		\norm{w^{s,\phi}_t-w^{s,\psi}_t}_{\mathcal{H}^q}\le C_1 \norm{\phi-\psi}_{\mathcal{H}^q},\quad \phi,\,\psi\in \mathcal{H}^q.
	\end{equation}
In addition, for all   $s\in [0,T]$ and  $\phi\in \mathcal{H}^q_s$,  the process $(w^{s,\phi}_t)_{t\in[s,T]}$ is $\mathbb{F}-$Markov, and 
\begin{equation}\label{markoviala}
	\mathbb{E}\left[\Phi\left(w^{s,\phi}_u\right) \Big| \mathcal{F}_t\right]
	=
	\restr{\mathbb{E}\left[\Phi\left(w_u^{t,\psi}\right)\right]}{\psi=w^{s,\phi}_t}
	,\quad \mathbb{P}-\text{a.s., }s\le t\le u\le T,\,\Phi\in\mathcal{B}_b(H),
\end{equation} 
where $\mathcal{B}_b(H)$ denotes the space of bounded Borel measurable functions from $H$ to $\mathbb{R}$.
\end{cor}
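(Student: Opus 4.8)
The plan is to obtain the Lipschitz bound \eqref{initial_c} by first deriving a single–step estimate and then propagating it through the cocycle property \eqref{cocycle}, and afterwards to deduce the Markov property from \eqref{cocycle} together with an independence (freezing) argument. For the Lipschitz bound, fix $0\le s<t\le T$ and $\phi,\psi\in\mathcal{H}^q$. Since $w^{s,\phi}_t$ and $w^{s,\psi}_t$ solve \eqref{eq_B1} with the \emph{same} stochastic convolution $\int_s^t\sigma(r)\,\dd W_r$, subtracting the two equations eliminates the noise term and leaves
\[
w^{s,\phi}_t-w^{s,\psi}_t=(\phi-\psi)+\int_s^t\left(\widebar{B}\left(r,w^{s,\phi}_t\right)-\widebar{B}\left(r,w^{s,\psi}_t\right)\right)\dd r.
\]
Taking $\mathcal{H}^q$–norms and bounding the drift increment exactly as in \eqref{contr_1} (Jensen's inequality and the Lipschitz bound in \eqref{bound_B}) yields, whenever $t-s=:\Delta$ satisfies $C_0\sqrt{\Delta}<1$, the estimate $\norm{w^{s,\phi}_t-w^{s,\psi}_t}_{\mathcal{H}^q}\le\norm{\phi-\psi}_{\mathcal{H}^q}+C_0\sqrt{\Delta}\,\norm{w^{s,\phi}_t-w^{s,\psi}_t}_{\mathcal{H}^q}$, hence $\norm{w^{s,\phi}_t-w^{s,\psi}_t}_{\mathcal{H}^q}\le(1-C_0\sqrt{\Delta})^{-1}\norm{\phi-\psi}_{\mathcal{H}^q}$. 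Crucially, this holds for arbitrary (not necessarily $\mathcal{F}_s$–measurable) initial data.

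To reach a general interval, I would fix $N=N(d,T)$ with $C_0\sqrt{T/N}<1$ as in the proof of Theorem \ref{well_pos}, partition $[s,t]$ into $N$ pieces of length $\le T/N$, and iterate the single–step bound along the cocycle decomposition $w^{s,\phi}_t=w^{t_{N-1},\,w^{s,\phi}_{t_{N-1}}}_t$, applying the single–step estimate at each stage with the (arbitrary) initial data $w^{s,\phi}_{t_{N-1}}$ and $w^{s,\psi}_{t_{N-1}}$. This produces \eqref{initial_c} with $C_1=(1-C_0\sqrt{T/N})^{-N}$, depending only on $d,q,T$.

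For the Markov property, take $s\le t\le u$, $\phi\in\mathcal{H}^q_s$, and set $g(\psi):=\mathbb{E}[\Phi(w^{t,\psi}_u)]$ for deterministic $\psi\in H$. The core identity is the cocycle property $w^{s,\phi}_u=w^{t,\,w^{s,\phi}_t}_u$, where $w^{s,\phi}_t\in\mathcal{H}^q_t$ is $\mathcal{F}_t$–measurable by Theorem \ref{well_pos}. For a fixed deterministic $\psi$, the solution $w^{t,\psi}_u$ is constructed, through the Picard iteration of Theorem \ref{well_pos}, as a measurable functional of $\psi$ and of the increments $(W_r-W_t)_{r\in[t,u]}$, and is therefore independent of $\mathcal{F}_t$; I would then invoke the standard freezing lemma for conditional expectations (see \cite{DP}) to conclude $\mathbb{E}[\Phi(w^{t,\,w^{s,\phi}_t}_u)\mid\mathcal{F}_t]=g(w^{s,\phi}_t)$, which is precisely \eqref{markoviala}. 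Since the right–hand side is a Borel function of $w^{s,\phi}_t$, the $\mathbb{F}$–Markov property follows immediately.

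Two technical points require care, and I expect the second to be the main obstacle. First, the freezing lemma needs a jointly measurable version of $(\psi,\omega)\mapsto w^{t,\psi}_u(\omega)$ and Borel measurability of $g$; both rest on the Lipschitz estimate \eqref{initial_c}, which shows $\psi\mapsto w^{t,\psi}_u$ is Lipschitz from $H$ into $\mathcal{H}^q$, so that $g$ is continuous when $\Phi$ is continuous and bounded, with the general case $\Phi\in\mathcal{B}_b(H)$ recovered by a functional monotone class argument. Second, the Volterra memory of $\widebar{B}$ must be reconciled with the independence requirement: for fixed deterministic $\psi$ one has $\restr{w^{t,\psi}_u}{(0,t)}=\restr{\psi}{(0,t)}$ deterministically (cf. \eqref{flow1} and Remark \ref{classical}), so the dependence of $\widebar{B}(r,\cdot)$ on the whole history $w|_{(0,r)}$ for $r\in(t,u)$ does not reintroduce any $\mathcal{F}_t$–measurable information, and the independence underlying the freezing step is preserved.
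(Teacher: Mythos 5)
Your proposal is correct and follows essentially the same route as the paper: a one-step contraction estimate obtained by subtracting the two equations (so the stochastic convolution cancels) and bounding the drift increment as in \eqref{contr_1}, iterated along an equispaced partition via the cocycle property \eqref{cocycle}, followed by the standard independence/freezing and monotone-class argument for \eqref{markoviala}, which is exactly what the paper invokes by citing \cite[Theorem 9.14]{DP}. The only cosmetic difference is that the paper derives the one-step Lipschitz bound pathwise ($\mathbb{P}$-a.s.\ in the $\norm{\cdot}_2$-norm, taking expectations and $q$-th roots only at the end), whereas you work directly with $\mathcal{H}^q$-norms; both yield \eqref{initial_c}.
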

\begin{proof}
 Fix $q\ge 2$, $0\le s < t\le T$ and consider $N=N\left(d,T\right)\in \mathbb{N}$ so big that $2C_0\sqrt{T/N}<{2}^{1/q},$ where $C_0=C_0\left(d,T\right)$ is the constant in \eqref{bound_B}.  Moreover, take an equispaced partition $\{t_k\}_{k=0}^N$ of $\left[s,t\right]$ where $t_0=s$ and $t_N=t$. By \eqref{bound_B}-\eqref{eq_B1}, for every $\phi,\,\psi\in \mathcal{H}^q,$
\begin{multline*}
\norm{w^{s,\phi}_{t_1}-w^{s,\psi}_{t_1}}^q_2\le 2^{q-1}\norm{\phi-\psi}^q_2+2^{q-1}\left(\frac{T}{N}\right)^{\frac{q}{2}}\norm{\widebar{B}\left(w^{s,\phi}_{t_1}\right)-\widebar{B}\left(w^{s,\psi}_{t_1}\right)}^q_{2,\ts}\\
\le 2^{q-1}\norm{\phi-\psi}^q_2+2^{q-1}C_0^q\left(\frac{T}{N}\right)^{\frac{q}{2}}\norm{w^{s,\phi}_{t_1}-w^{s,\psi}_{t_1}}^q_{2},\quad \text{$\mathbb{P}-$a.s.,}
\end{multline*}
hence 
\begin{equation*}
	\norm{w^{s,\phi}_{t_1}-w^{s,\psi}_{t_1}}^q_2\le 
	2^{q-1}\left(1-2^{q-1}C_0^q\left(\frac{T}{N}\right)^{\frac{q}{2}}\right)^{-1}\norm{\phi-\psi}^q_2,\quad \mathbb{P}-\text{a.s.}
\end{equation*}
Thus, by the cocycle property in \eqref{cocycle}, for every $\phi,\,\psi\in \mathcal{H}^q,$
\begin{align*}
	\norm{w^{s,\phi}_t-w^{s,\psi}_t}^q_2&=
	\norm{w^{t_{N-1},w^{s,\phi}_{t_{N-1}}}_{t_{N}}-w^{t_{N-1},w^{s,\psi}_{t_{N-1}}}_{t_N}}^q_2
	\\&\le 2^{q-1}\left(1-2^{q-1}C_0^q\left(\frac{T}{N}\right)^{\frac{q}{2}}\right)^{-1}\norm{w^{t_{N-2},w^{s,\phi}_{t_{N-2}}}_{t_{N-1}}-w^{t_{N-2},w^{s,\psi}_{t_{N-2}}}_{t_{N-1}}}^q_2\\
	&\le  
	2^{N\left(q-1\right)}\left(1-2^{q-1}C_0^q\left(\frac{T}{N}\right)^{\frac{q}{2}}\right)^{-N}\norm{\phi-\psi}^q_2,\quad\text{$\mathbb{P}-$a.s.,} 
\end{align*}
which shows \eqref{initial_c} upon taking expectations and $q-$th root, as desired.

The Markov property of the process $(w^{s,\phi}_t)_{t\in[s,T]},\,\phi\in \mathcal{H}^q_s,$ is a consequence of \eqref{markoviala}. In turn, the equality in \eqref{markoviala} can be readily obtained by paralleling the monotone class argument in \cite[Theorem $9.14$]{DP}, which essentially relies on the cocycle property in \eqref{cocycle} and the Lipschitz--continuous dependence in \eqref{initial_c}. Thus, the proof is complete.
\end{proof}
\subsection{First–order differentiability in the initial data}\label{first_order}
In this subsection we focus on deterministic initial conditions for \eqref{eq_B1}, i.e., $\phi\in H$. From now on, we denote the Hilbert space $\mathcal{H}^2=L^2(\Omega;H)$ simply by $\mathcal{H}$. 

In order to study the first–order Fréchet differentiability  of $w^{s,\phi}_t$ in ${H}$, we require hypotheses on $B$ which are stronger than Assumption \ref{B_1}. In fact, we  need some  conditions on the Frechét differentiability of $B$ in the normed space $(\Lambda,\norm{\cdot}_2)$. In the sequel, we write $\Lambda_2$ for $(\Lambda,\norm{\cdot}_2)$ to have a compact notation.
\begin{ass}\label{B_2}
	The map $B\colon \Lambda\to H_{\ts}$ satisfies Assumption \ref{B_1}. Moreover, $B$ is $\Lambda_2-$Fréchet differentiable, and there exists a constant $C_0=C_0\left(d,T\right)>0$ such that
\begin{equation}\label{direct}
	\norm{DB\left(w_1\right)\left(w_2\right)}_{2,{\ts}}\le C_0\norm{w_2}_2,\quad w_1,w_2\in \Lambda, 
\end{equation}	
and 
\begin{equation}\label{ho_alpha}
	\norm{DB\left(w_1\right)-DB\left(w_2\right)}_{\mathcal{L}\left(\Lambda_2;H_{\ts}\right)}\le C_0\norm{w_1-w_2}_2^{\gamma},\quad w_1,w_2\in \Lambda,\text{ for some }\gamma\in\left(0,1\right].
\end{equation}
\end{ass}
Without loss of generality, we assume the constant $C_0$ in \eqref{direct}-\eqref{ho_alpha} to be the same as the one in \eqref{bound_B}.\\ Under Assumption \ref{B_2}, precisely by \eqref{direct} and the theorem of extension of uniformly continuous functions, for every $w_1\in\Lambda$ it is possible to extend $DB\left(w_1\right)\in\mathcal{L}\left(\Lambda; H_{\ts}\right)$ to an operator $\widebar {DB}\left(w_1\right)\in  \mathcal{L}\left(H; H_{\ts}\right)$ satisfying \eqref{direct} for all $w_2\in H$. Moreover, by \eqref{ho_alpha},
\begin{equation}\label{ho_alpha1}
	\norm{\widebar{DB}\left(w_1\right)-\widebar{DB}\left(w_2\right)}_{\mathcal{L}\left(H;H_{\ts}\right)}=
	\norm{{DB}\left(w_1\right)-{DB}\left(w_2\right)}_{\mathcal{L}\left(\Lambda_2;H_{\ts}\right)}\le C_0\norm{w_1-w_2}_2^{\gamma},\quad w_1,w_2\in \Lambda,
\end{equation}
	hence we can extend (without changing the notation) 
	\begin{align}\label{meglio}
	\text{$\widebar{DB}\colon H \to \mathcal{L}\left(H;H_{\ts}\right)$, with $\widebar{DB}$ satisfying (\ref{direct})-(\ref{ho_alpha1}) for every $w_1,w_2\in H$.}
	\end{align}
	We  want to show that  $\widebar{B}$ is $H-$Fréchet differentiable, with $D\widebar{B}=\widebar{DB}.$ 
	By Taylor's formula applied on $B$, recalling that $\restr{\widebar{B}}{\Lambda}=B$ and $\restr{\widebar{DB}(w_1)}{\Lambda}=DB(w_1),\,w_1\in\Lambda$, we write
	\begin{align}
		\label{save}
		&\widebar{B}\left(w_2\right)-\widebar{B}\left(w_1\right)-\widebar{DB}\left(w_1\right)\left(w_2-w_1\right)=r\left(w_1,w_2\right), \quad  w_1,w_2\in\Lambda,\text{ where}\\& 
		r\left(x,y\right)= \notag\int_{0}^{1}\left(\widebar{DB}\left(x+h\left(y-x\right)\right)-\widebar{DB}\left(x\right)\right)\left(y-x\right)\dd h,\quad x,\,y\in H. 
	\end{align}
Note that $r\colon H\times H\to H_{\ts}$ is continuous. Indeed, for every $x,y \in H$ and every sequence $H\times H\ni \left(x_n,y_n\right)\to \left(x,y\right)$ as $n\to \infty$, by Bochner's theorem and \eqref{meglio}, after some algebraic computations we deduce that
\begin{multline*}
	\norm{r\left(x_n,y_n\right)-r\left(x,y\right)}_{2,\ts}\le
	\frac{C_0}{\gamma+1} \norm{y_n-x_n}^{\gamma}_{2}\norm{y_n-y+x-x_n}_2
	\\+C_0\left(\frac{1}{\gamma+1}\norm{y_n-y+x-x_n}^{\gamma}_2+2\norm{x-x_n}_2^{\gamma}\right)\norm{y-x}_2\to0,\quad\text{as }n\to\infty.
\end{multline*}
It then follows from the continuity of $\widebar{B}$ in $H$ and \eqref{meglio} that  \eqref{save} holds for every $w_1,w_2\in H$.
Moreover, since by \eqref{meglio} $\norm{r\left(x,y\right)}_{2,\ts}\le C_0(\gamma+1)^{-1}\norm{y-x}^{1+\gamma}_2,\,x,\,y\in H$, we conclude that 
\[
	\widebar{B}\left(w_2\right)-\widebar{B}\left(w_1\right)-\widebar{DB}\left(w_1\right)\left(w_2-w_1\right)=\text{o}\left(\norm{w_2-w_1}_2\right),\quad w_1,w_2\in H.
\]
Therefore $\widebar{B}$ is $H-$Fréchet differentiable, with $D\widebar{B}=\widebar{DB}.$ 

We also notice that, for every $w_1,\,w_2\in H$ and $0<t\le T$, 
\begin{equation}\label{VO_D}
	\text{$D\widebar{B}(w_1)\left(r,w_2\right)\coloneqq [D\widebar{B}(w_1)(w_2)]\left(r,\cdot\right)\in H$ is of Volterra--type, for a.e. $r\in\left(0,t\right)$},
\end{equation}
and that 
\begin{equation}\label{D_P_D}
	\text{$D\widebar{B}(w_1)\left(r,w_2\right)$ depends on $w_i$ only via $w_i\big|_{\left(0,t\right)},\,i=1,2$, for a.e. $r\in\left(0,t\right)$}:
\end{equation}
these two properties are inherited from $\widebar{B}$, see Assumption~\ref{B_1}.

The next result shows that, under Assumption \ref{B_2}, the solution $w_t^{s,\phi}$  of \eqref{eq_B1}, considered as a map from $H$ to $\mathcal{H}$, is $H-$Fréchet differentiable.

\begin{theorem}\label{diff_w_thm}
	Under Assumption \ref{B_2}, for every $0\le s\le t \le T$, the mapping $w^{s,\cdot}_t\in C^{1+\gamma}\left({H};\mathcal{H}\right)$. In particular, for every $\phi,\,\psi\in H$,  $Dw^{s,\phi}_{t} \psi$ is the unique solution in $\mathcal{H}$ of the following equation:
	\begin{equation}\label{frec_w}
		Dw^{s,\phi}_{t} \psi=\psi+ \int_{s}^{t}D\widebar{B}\left(w^{s,\phi}_t\right)\left(r,Dw_t^{s,\phi}\psi\right)\dd r.
	\end{equation} 
Furthermore, there exists a constant $C_2=C_2(d,T)>0$ such that, for every $\phi,\,\psi,\,\eta\in {H}$, $\mathbb{P}-\text{a.s.},$
\begin{equation}\label{alpha-h}
	\norm{Dw^{s,\phi}_t\eta}_{2}\le C_2\norm{\eta}_2,\qquad 
	\norm{Dw^{s,\phi}_t\eta-Dw^{s,\psi}_t\eta}_{2}\le C_2\norm{w^{s,\phi}_t-w^{s,\psi}_t}_{2}^{\gamma}\norm{\eta}_2. 
\end{equation}
\end{theorem}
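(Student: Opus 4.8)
The plan is to follow the classical strategy for the differentiability of stochastic flows with respect to the initial datum, adapted to the Volterra structure of $\widebar{B}$, and organized along the same equispaced partition $\{t_k\}_{k=0}^N$ of $[s,t]$ with $C_0\sqrt{T/N}<1$ employed in the proof of Theorem \ref{well_pos}. First I would establish that the linear equation \eqref{frec_w} is well posed in $\mathcal{H}$ and defines a bounded linear operator in $\psi$. For fixed $\phi$, the map $v\mapsto \psi+\int_s^t D\widebar{B}\left(w^{s,\phi}_t\right)\left(r,v\right)\dd r$ is affine, and by \eqref{direct}, Bochner's theorem and Jensen's inequality its linear part has $\mathcal{H}$–operator norm at most $C_0\sqrt{\Delta}$ on each subinterval of mesh $\Delta\le T/N$; hence it is a contraction there. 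Using the Volterra properties \eqref{VO_D}--\eqref{D_P_D} exactly as in the passage from \eqref{sotto} to \eqref{mett}, the local solutions glue into a unique global solution, which we call $Dw^{s,\phi}_t\psi$. Linearity in $\psi$ is immediate from uniqueness, and the pathwise bound $\norm{Dw^{s,\phi}_t\eta}_2\le C_2\norm{\eta}_2$ follows from the per–step estimate $\norm{v}_2\le\left(1-C_0\sqrt{\Delta}\right)^{-1}\norm{\text{input}}_2$ iterated $N$ times, as in Corollary \ref{lip}.

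Next I would prove the Hölder dependence in \eqref{alpha-h}. Writing $v^\phi=Dw^{s,\phi}_t\eta$ and $v^\psi=Dw^{s,\psi}_t\eta$ and subtracting the two copies of \eqref{frec_w}, I split the integrand as $D\widebar{B}\left(w^{s,\phi}_t\right)\left(r,v^\phi-v^\psi\right)+\left[D\widebar{B}\left(w^{s,\phi}_t\right)-D\widebar{B}\left(w^{s,\psi}_t\right)\right]\left(r,v^\psi\right)$. The first term is controlled via \eqref{direct}, the second via the Hölder estimate \eqref{ho_alpha1} together with the already–proved first bound $\norm{v^\psi}_2\le C_2\norm{\eta}_2$. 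Per subinterval this yields $\norm{v^\phi-v^\psi}_2\le C_0\sqrt{\Delta}\,\norm{v^\phi-v^\psi}_2+C\,\norm{w^{s,\phi}_t-w^{s,\psi}_t}_2^\gamma\norm{\eta}_2$; absorbing the first term on the left and chaining across the partition gives the second inequality in \eqref{alpha-h}.

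The main step, and the main obstacle, is Fréchet differentiability itself. Here I would set $\rho=w^{s,\phi+\psi}_t-w^{s,\phi}_t-Dw^{s,\phi}_t\psi$ and subtract the two instances of \eqref{eq_B1} (for $\phi+\psi$ and $\phi$) and \eqref{frec_w}: the initial increment $\psi$ and the stochastic convolution cancel, leaving $\rho=\int_s^t\left[\widebar{B}\left(r,w^{s,\phi+\psi}_t\right)-\widebar{B}\left(r,w^{s,\phi}_t\right)-D\widebar{B}\left(w^{s,\phi}_t\right)\left(r,Dw^{s,\phi}_t\psi\right)\right]\dd r$. Adding and subtracting $D\widebar{B}\left(w^{s,\phi}_t\right)\left(r,w^{s,\phi+\psi}_t-w^{s,\phi}_t\right)$ rewrites the integrand as the Taylor remainder $r\left(w^{s,\phi}_t,w^{s,\phi+\psi}_t\right)$ of \eqref{save} plus the linear term $D\widebar{B}\left(w^{s,\phi}_t\right)\left(r,\rho\right)$. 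By the remainder estimate $\norm{r\left(x,y\right)}_{2,\ts}\le C_0\left(\gamma+1\right)^{-1}\norm{y-x}_2^{1+\gamma}$ established after \eqref{save}, together with the Lipschitz bound \eqref{initial_c} giving $\norm{w^{s,\phi+\psi}_t-w^{s,\phi}_t}_2\le C_1\norm{\psi}_2$, the remainder contributes $\text{o}\left(\norm{\psi}_2\right)$ of order $\norm{\psi}_2^{1+\gamma}$, while the linear term is once more a $C_0\sqrt{\Delta}$–contraction per subinterval. Iterating $\norm{\rho}_2\le C\norm{\psi}_2^{1+\gamma}+C_0\sqrt{\Delta}\,\norm{\rho}_2$ across the $N$ subintervals yields $\norm{\rho}_{\mathcal{H}}\le C_2\norm{\psi}_H^{1+\gamma}=\text{o}\left(\norm{\psi}_H\right)$, proving Fréchet differentiability with derivative given by \eqref{frec_w}; combined with \eqref{alpha-h}, the derivative is $\gamma$–Hölder in $\phi$, so $w^{s,\cdot}_t\in C^{1+\gamma}\left(H;\mathcal{H}\right)$.

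I expect the delicate point to be the bookkeeping of the Volterra restriction properties when gluing the local solutions of the linear equation \eqref{frec_w} and when chaining the remainder estimate across the partition: one must verify, via the cocycle property \eqref{cocycle} and \eqref{VO_D}--\eqref{D_P_D}, that the piecewise constructions genuinely solve the global equations and that the constants accumulated through the $N$ steps remain finite and independent of $\phi,\psi,\eta$.
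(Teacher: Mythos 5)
Your proposal follows essentially the same route as the paper's proof: the same equispaced partition with $C_0\sqrt{T/N}<1$, a fixed-point argument for the linearized equation \eqref{frec_w} whose local solutions are glued via the Volterra properties \eqref{VO_D}--\eqref{D_P_D}, the Taylor-remainder-plus-absorption argument for Fréchet differentiability (the paper's \eqref{ma1}--\eqref{ma2}, which, as you do, invokes Corollary \ref{lip} with $q=2(1+\gamma)$), and the identical two-term splitting for the Hölder bound in \eqref{alpha-h}. The only place where your write-up is looser than the paper is the chaining of the defect estimate: the displayed inequality $\norm{\rho}_2\le C\norm{\psi}_2^{1+\gamma}+C_0\sqrt{\Delta}\,\norm{\rho}_2$ cannot hold for the global defect over $[s,t]$ (there the linear term carries $C_0\sqrt{t-s}$, which need not be small), and the paper makes this rigorous precisely with the tools you flag in your last paragraph, namely by constructing $Dw^{s,\phi}_{t_k}$ recursively through the one-step equation \eqref{add} and using the cocycle property \eqref{cocycle} to get a recursion of the form $\norm{\rho_{t_{k+1}}}_{\mathcal{H}}\le\left(1-C_0\sqrt{T/N}\right)^{-1}\left(\norm{\rho_{t_k}}_{\mathcal{H}}+C\norm{\psi}_2^{1+\gamma}\right)$ for the defects $\rho_{t_k}$ at the partition points, so that only $N$ fixed absorptions ever occur.
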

\begin{proof}
Fix $0\le s \le t\le T$ and $\phi\in H$. Firstly, we prove the well–posedness in $\mathcal{H}$ of the equation
\begin{equation}\label{claro}
		w=\psi+ \int_{s}^{t}D\widebar{B}\left(w^{s,\phi}_t\right)\left(r,w\right)\dd r,\quad \psi\in H.
\end{equation}
Consider $N=N\left(d,T\right)\in \mathbb{N}$ so big that $C_0\sqrt{T/N}<1,$ where $C_0=C_0\left(d,T\right)$ is the constant in Assumptions \ref{B_1}-\ref{B_2}. In addition, take an equispaced partition $\{t_k\}_{k=0}^N$ of $\left[s,t\right]$ where $t_0=s$ and $t_N=t$: its mesh $\Delta\le T/N$. By \eqref{meglio} (see also \eqref{direct}) and Bochner's theorem, the following estimate holds:
\begin{multline}\label{claro1}
	\norm{\int_{t_0}^{t_1}D\widebar{B}\left(w^{s,\phi}_t\right)\left(w_1-w_2\right)\left(r,\cdot\right)\dd r}_\mathcal{H}\le
	\sqrt{\Delta}\mathbb{E}\left[\int_{t_0}^{t_1}\int_{0}^{T}\left|D\widebar{B}\left(w^{s,\phi}_t\right)\left(w_1-w_2\right)\right|^2\left(r,\xi\right)\dd \xi\,\dd r\right]^{\frac{1}{2}}
	\\
	\le \sqrt{\Delta}\mathbb{E}\left[\norm{D\widebar{B}\left(w^{s,\phi}_t\right)\left(w_1-w_2\right)}_{2,\ts}^2\right]^{\frac{1}{2}}
	\le  C_0\sqrt{\Delta}\norm{w_1-w_2}_\mathcal{H},\quad w_1,w_2\in \mathcal{H}.
\end{multline}
Thus, employing a fixed point argument as in the proof of Theorem \ref{well_pos}, we deduce the existence of a unique solution $\widebar{w}^{\psi}_1\in\mathcal{H}$ of \eqref{claro} with $t_1$ instead of $t$, for every $\psi\in H$. 

We claim that the operator $Dw^{s,\phi}_{t_1}\colon H\to \mathcal{H}$ defined by $Dw^{s,\phi}_{t_1}\psi=\widebar{w}_1^{\psi},\,\psi\in H,$ is the Fréchet differential of $w^{s,\phi}_{t_1}$. Indeed, the linearity of  $Dw^{s,\phi}_{t_1}$ is straightforward, while the continuity is ensured by the following computation, which can be argued from \eqref{claro} similarly to \eqref{claro1}: 
\begin{equation}\label{bound_derw_ste1}
	\norm{Dw^{s,\phi}_{t_1}\psi}_{2}
	\le \left(1-C_0\sqrt{T/N}\right)^{-1}\norm{\psi}_2,	\quad \mathbb{P}-\text{a.s., }\psi\in {H}.
\end{equation}
Moreover,  recalling \eqref{eq_B1}-\eqref{frec_w},
\begin{align}\label{ma1}
	\notag&\norm{w^{s,\phi+h}_{t_1}-w^{s,\phi}_{t_1}-Dw^{s,\phi}_{t_1}h}_\mathcal{H}
\le \sqrt{\Delta}\mathbb{E}\left[\norm{\widebar{B}\left(w^{s,\phi+h}_{t_1}\right)-\widebar{B}\left(w^{s,\phi}_{t_1}\right)-D\widebar{B}\left(w^{s,\phi}_{t_1}\right)Dw^{s,\phi}_{t_1}h}^2_{2,\ts}\right]^{\frac{1}{2}}
	\\\notag&\qquad\le \sqrt{T/N}\left(\mathbb{E}\left[\norm{D\widebar{B}\left(w^{s,\phi}_{t_1}\right)
	\left(w^{s,\phi+h}_{t_1}-w^{s,\phi}_{t_1}-Dw^{s,\phi}_{t_1}h\right)}_{2,\ts}^2\right]^{\frac{1}{2}}
\right.\\\notag&\qquad\qquad  \qquad \quad+\left.
\mathbb{E}\left[\norm{\int_{0}^{1}\left(D\widebar{B}\left(w^{s,\phi}_{t_1}+u\left(w^{s,\phi+h}_{t_1}-w^{s,\phi}_{t_1}\right)\right)-D\widebar{B}\left(w^{s,\phi}_{t_1}\right)\right)
	\left(w^{s,\phi+h}_{t_1}-w^{s,\phi}_{t_1}\right)\dd u}_{2,\ts}^2\right]^{\frac{1}{2}}
	\right)
	\\&\qquad\le \sqrt{T/N}C_0\left(\norm{w^{s,\phi+h}_{t_1}-w^{s,\phi}_{t_1}-Dw^{s,\phi}_{t_1}h}_\mathcal{H}
	+\mathbb{E}\left[\norm{w^{s,\phi+h}_{t_1}-w^{s,\phi}_{t_1}}_2^{2\left(1+\gamma\right)}\right]^{\frac{1}{2}}
	\right),\quad h\in H,
\end{align}
where we apply Taylor's formula on $\widebar{B}$ for the second inequality and \eqref{meglio} together with Bochner's theorem for the third. Notice that $H\subset \mathcal{H}^q$ for every $q\ge 2$. Therefore, by  Corollary \ref{lip} with $q=2(1+\gamma),$ from \eqref{ma1} we infer that 
\begin{equation}\label{ma2}
\norm{w^{s,\phi+h}_{t_1}-w^{s,\phi}_{t_1}-Dw^{s,\phi}_{t_1}h}_\mathcal{H}\le \sqrt{T/N} C_0C_{1}^{1+\gamma}\left(1-\sqrt{T/N}C_0\right)^{-1}
\norm{h}_2^{1+\gamma}
=\text{o}\left(\norm{h}_2\right),\quad h\in H,
\end{equation}
for some constant $C_1=C_1(\gamma,d,T)>0$. 
This shows that $Dw^{s,\phi}_{t_1}$ is the Fréchet differential of $w^{s,\phi}_{t_1}$, as desired.

Next,  consider  
\begin{equation}\label{add}
	w
	=Dw_{t_1}^{s,\phi}\psi+ \int_{t_1}^{t_2}D\widebar{B}\left(w^{s,\phi}_{t_2}\right)\left(r,	w\right)\dd r,\quad \psi \in H:
\end{equation}
the well--posedness of this equation in $\mathcal{H}$ can be obtained via a fixed–point argument as in the above step.  We denote  by $\widebar{w}_2^{\psi}\in\mathcal{H},\,\psi\in H$, the unique solution of \eqref{add}. 

We argue that $\widebar{w}_2^\psi$ is the unique solution of \eqref{claro} with $t_2$ instead of $t$, for every $\psi\in H$. 
By the Volterra–type property of $D\widebar{B}$ in \eqref{VO_D} and \eqref{add} we have,  $\mathbb{P}-$a.s.,
\[
\restr{\widebar{w}_2^{\psi}}{\left(0,t_1\right)}=\restr{Dw^{s,\phi}_{t_1}\psi}{\left(0,t_1\right)}.
\]
Furthermore, thanks to the relation $w^{s,\phi}_{t_2}=w^{t_1,w^{s,\phi}_{t_1}}_{t_2}$  in \eqref{cocycle} and the properties of $\widebar{B}$ under Assumption \ref{B_1} we can write, $\mathbb{P}-$a.s.,
\begin{equation}\label{wow}
	\restr{w^{s,\phi}_{t_2}}{\left(0,t_1\right)}=\restr{w^{s,\phi}_{t_1}}{\left(0,t_1\right)},
\end{equation}
see Remark \ref{classical}. 
Consequently, by the property of $D\widebar{B}$ in \eqref{D_P_D} and recalling  that $Dw_{t_1}^{s,\phi}\psi$ satisfies \eqref{claro} with $t_1$ instead of $t$, from \eqref{add} we conclude that, $\mathbb{P}-$a.s., 
\begin{multline}\label{dai}
\widebar{w}_2^{\psi}
	=\psi+\int_{s}^{t_1}D\widebar{B}\left(w^{s,\phi}_{t_1}\right)\left(r,Dw_{t_1}^{s,\phi}\psi\right)\dd r
	+\int_{t_1}^{t_2}D\widebar{B}\left(w^{s,\phi}_{t_2}\right)\left(r,\widebar{w}_2^{\psi}\right)\dd r
	\\=
	\psi
	+\int_{s}^{t_2}D\widebar{B}\left(w^{s,\phi}_{t_2}\right)\left(r,\widebar{w}_2^{\psi}\right)\dd r.
\end{multline}
Hence $\widebar{w}_2^{\psi}$ solves \eqref{claro} with $t$ replaced by $t_2$; to prove that it is in fact the unique solution,  we consider another random variable $\widetilde{w}\in\mathcal{H}$ satisfying \eqref{dai}. Then, by \eqref{VO_D}-\eqref{D_P_D},
\begin{equation}\label{uni1}
	1_{(0,t_1)}\widetilde{w}=1_{(0,t_1)}\left(\psi+\int_{s}^{t_1}D\widebar{B}\left(w^{s,\phi}_{t_1}\right)\left(r,1_{(0,t_1)}\widetilde{w}\right)\dd r
	\right).
\end{equation}
We observe that also $1_{(0,t_1)}\widebar{w}^{\psi}_1\in\mathcal{H}$ satisfies \eqref{uni1}. Therefore, using Bochner's theorem and Jensen's inequality, by \eqref{meglio} we can compute
\begin{align}\label{metterla:tardi}
	\notag\norm{1_{(0,t_1)}\left(\widebar{w}_1^{\psi}-\widetilde{w}\right)}_{\mathcal{H}}^2&\le 
	\mathbb{E}\left[\left(\int_{s}^{t_1}\norm{D\widebar{B}\left(w^{s,\phi}_{t_1}\right)\left(r, 1_{(0,t_1)}\left(\widebar{w}^{\psi}_1-\widetilde{w}\right)\right)}_2\dd r\right)^2\right]
	\\
	&\le 
	\Delta\mathbb{E}\left[\norm{D\widebar{B}\left(w^{s,\phi}_{t_1}\right)\left( 1_{(0,t_1)}\left(\widebar{w}^{\psi}_1-\widetilde{w}\right)\right)}^2_{2,\ts}\right]\le \Delta C_0^2\norm{1_{(0,t_1)}\left(\widebar{w}_1-\widetilde{w}\right)}_{\mathcal{H}}^2,
\end{align} 
which allow us to conclude, recalling that $\sqrt{\Delta}C_0<1$,
\[
1_{(0,t_1)}\widetilde{w}=1_{(0,t_1)}\widebar{w}^{\psi}_1,\quad \mathbb{P}-\text{a.s.}
\]
Going back to \eqref{dai}, by \eqref{claro} and the previous equality we have, $\mathbb{P}-$a.s.,
\begin{equation*}
	\widetilde{w}=\psi+\int_{s}^{t_1}D\widebar{B}\left(w^{s,\phi}_{t_1}\right)\left(r,	\widebar{w}_1^{\psi}\right)\dd r
+\int_{t_1}^{t_2}D\widebar{B}\left(w^{s,\phi}_{t_2}\right)\left(r,\widetilde{w}\right)\dd r
	=
	\widebar{w}_1^{\psi}	+\int_{t_1}^{t_2}D\widebar{B}\left(w^{s,\phi}_{t_2}\right)\left(r,\widetilde{w}\right)\dd r.
\end{equation*}
It follows that $\widetilde{w}$ satisfies \eqref{add}: by uniqueness, we obtain $\widetilde{w}=\widebar{w}^{\psi}_2$. Hence $\widebar{w}_2^{\psi}$ is the unique solution of \eqref{claro} in $\mathcal{H}$ with $t_2$ instead of $t$.

We define the operator $Dw^{s,\phi}_{t_2}\colon H\to \mathcal{H}$  by $Dw^{s,\phi}_{t_2}\psi=\widebar{w}_2^{\psi},\,\psi\in H,$ and claim that it is the Fréchet differential of $w^{s,\phi}_{t_2}$. To see this, note that the linearity of $Dw^{s,\phi}_{t_2}$ is a consequence of the well–posedness of \eqref{dai}. As for the continuity, it is ensured by the following computations, where we use \eqref{meglio}-\eqref{bound_derw_ste1}-\eqref{add}:
\begin{multline*}
	\norm{Dw_{t_2}^{s,\phi}\psi}_2\le 
	\norm{Dw_{t_1}^{s,\phi}\psi}_2+ \int_{t_1}^{t_2}\norm{D\widebar{B}\left(w^{s,\phi}_{t_2}\right)(r,Dw_{t_2}^{s,\phi}\psi)}_2 \dd r
	\\
	\le \left(1-C_0\sqrt{T/N}\right)^{-1}\norm{\psi}_2
	+
	\sqrt{\Delta}C_0\norm{Dw_{t_2}^{s,\phi}\psi}_2,\quad \mathbb{P}-\text{a.s.}, \,\psi \in H,
\end{multline*}
whence 
\begin{equation}\label{bound_ste2}
\norm{Dw_{t_2}^{s,\phi}\psi}_2\le \left(1-C_0\sqrt{T/N}\right)^{-2}\norm{\psi}_2,\quad \mathbb{P}-\text{a.s.}, \,\psi \in H.
\end{equation}
Moreover, by the cocycle property in \eqref{cocycle} and  reasoning as in \eqref{ma1}, by \eqref{eq_B1}-\eqref{add} we obtain, for some constant $c>0$,  
\begin{align}
	\notag&\norm{w^{s,\phi+h}_{t_2}-w^{s,\phi}_{t_2}-Dw^{s,\phi}_{t_2}h}_\mathcal{H}
	=
	\norm{w^{t_1,w^{s,\phi+h}_{t_1}}_{t_2}-w^{t_1,w^{s,\phi}_{t_1}}_{t_2}-Dw^{s,\phi}_{t_1}h-\int_{t_1}^{t_2}D\widebar{B}\left(w_{t_2}^{s,\phi}\right)\left(r,Dw_{t_2}^{s,\phi}h\right)\dd r}_\mathcal{H}
	\\&\qquad \notag
	\le 
	\norm{w^{s,\phi+h}_{t_1}-w^{s,\phi}_{t_1}-Dw^{s,\phi}_{t_1}h}_\mathcal{H}
	+
	\norm{\int_{t_1}^{t_2}\left(\widebar{B}\left(w^{s,\phi+h}_{t_2}\right)-\widebar{B}\left(w^{s,\phi}_{t_2}\right)-D\widebar{B}\left(w^{s,\phi}_{t_2}\right)Dw^{s,\phi}_{t_2}h\right)\left(r,	\cdot\right)\dd r}_\mathcal{H}
	\\&\label{ma4}\qquad \le  c\norm{h}_2^{1+\gamma}=
	\text{o}\left(\norm{h}_2\right),\quad h\in H,
\end{align}
where we also employ \eqref{ma2} in the last inequality.
This shows that $Dw^{s,\phi}_{t_2}$ is the Fréchet differential of $w^{s,\phi}_{t_2}$, as desired.

Repeating this argument $N-$times, we deduce that  the operator $Dw^{s,\phi}_{t}\colon H\to \mathcal{H}$ defined by $Dw^{s,\phi}_{t}\psi=\widebar{w}_N^{\psi}$,  where $\widebar{w}_N^{\psi}$ is the unique solution of \eqref{claro} in $\mathcal{H}$, for every $\psi\in H$, is the Fréchet differential of $w_t^{s,\phi}$.  In particular, the first bound in \eqref{alpha-h} is true, because (cfr. \eqref{bound_derw_ste1}-\eqref{bound_ste2})
\begin{equation}\label{bound_derw}
	\norm{Dw^{s,\phi}_t\psi}_{2}\le \left({1}-{C_0\sqrt{T/N}}\right)^{-{N}}\norm{\psi}_2\eqqcolon \widebar{C}\norm{\psi}_2,\quad \mathbb{P}-\text{a.s., }\phi, \psi\in {H}.
\end{equation}
As regards the second inequality in \eqref{alpha-h}, by \eqref{meglio}, \eqref{frec_w}  and \eqref{bound_derw} we have, for every $\phi,\,\psi,\,\eta\in {H},$ $\mathbb{P}-$a.s.,
\begin{align*}
	&\norm{Dw_{t_1}^{s,\phi}\eta-Dw_{t_1}^{s,\psi}\eta}_{2}
	= 
\norm{\int_{s}^{t_1}\left(D\widebar{B}\left(w^{s,\phi}_{t}\right)Dw_{t_1}^{s,\phi}\eta
		-D\widebar{B}\left(w^{s,\psi}_{t}\right)Dw_{t_1}^{s,\psi}\eta\right)\left(r,\cdot\right)\,\dd r}_2
	\\
	&\qquad \le \sqrt{\Delta}	\left(
	\norm{D\widebar{B}\left(w^{s,\phi}_{t}\right)\left(Dw_{t_1}^{s,\phi}\eta-Dw_{t_1}^{s,\psi}\eta\right)}_{2,\ts}
	+
\norm{\left(
	D\widebar{B}\left(w^{s,\phi}_{t}\right)
	-D\widebar{B}\left(w^{s,\psi}_{t}\right)\right)Dw_{t_1}^{s,\psi}\eta
	}_{2,\ts}\right)
	\\&\qquad \le
	C_0\sqrt{T/N}\left(\norm{Dw^{s,\phi}_{t_1}\eta-Dw^{s,\psi}_{t_1}\eta}_{2}+\widebar{C}\norm{w^{s,\phi}_{t}-w^{s,\psi}_{t}}_2^{\gamma}\norm{\eta}_2\right),
\end{align*}
where in the first equality we also use \eqref{D_P_D} and \eqref{wow} with $t$ instead of $t_2$. It follows that 
\[
\norm{Dw_{t_1}^{s,\phi}\eta-Dw_{t_1}^{s,\psi}\eta}_{2}
\le 
\left(1-C_0\sqrt{T/N}\right)^{-1}C_0\widebar{C}\sqrt{T/N}
\norm{w^{s,\phi}_{t}-w^{s,\psi}_{t}}_2^{\gamma}\norm{\eta}_2.
\]
By \eqref{add}, we sequentially iterate this computation to obtain the second bound in \eqref{alpha-h} with $$C_2=\max\{\widebar{C},NC_0\widebar{C}^2\sqrt{T/N}\}.$$

At this point, taking expectations and using Corollary \ref{lip} with $q=2\gamma$ (recall that $H\subset \mathcal{H}^q$), by Jensen's inequality we infer that, for some constant $C>0$,
\begin{multline*}
\norm{Dw_{t}^{s,\phi}-Dw_{t}^{s,\psi}}_{\mathcal{L}\left(H;\mathcal{H}\right)}=\sup_{\norm{\eta}_{2}\le 1}\mathbb{E}\left[\norm{Dw_{t}^{s,\phi}\eta-Dw_{t}^{s,\psi}\eta}_2^2\right]^\frac{1}{2}
\le 
C_2\mathbb{E}\left[\norm{w^{s,\phi}_{t}-w^{s,\psi}_{t}}_2^{2\gamma}\right]^{\frac{1}{2}}\\
\le C\norm{\phi-\psi}_2^\gamma,\quad \phi,\,\psi\in {H}.
\end{multline*}
This shows that $Dw^{s,\cdot}_t\in C^{\gamma}\left(H;\mathcal{L}(H;\mathcal{H})\right)$, completing the proof.
\end{proof}
\subsection{Second–order differentiability in the initial data}\label{second_order}
Recalling the normed space $\Lambda_2=\big(\Lambda,\norm{\cdot}_2\big)$, in the sequel we identify $\mathcal{L}(\Lambda_2;\mathcal{L}(\Lambda_2;H_{\ts}))$ with the space $\mathcal{L}(\Lambda_2,\Lambda_2;H_{\ts})$ of bilinear forms from $\Lambda_2 \times\Lambda_2$ to $H_{\ts}$ in the usual way.\\
For the purpose of investigating the second--order Fréchet differential in ${H}$ of $w^{s,\phi}_t$, we need to require another condition on $B$. 
\begin{ass}\label{B_3}
	The map $B\colon \Lambda\to H_{\ts}$ satisfies Assumption \ref{B_2}. Moreover, $B$ is twice  $\Lambda_2-$Fréchet differentiable, and there exists a constant $C_0=C_0\left(d,T\right)>0$ such that
	\begin{equation}\label{direct_2}
		\norm{D^2B\left(w_1\right)\left(w_2,w_3\right)}_{2,{\ts}}\le C_0\norm{w_2}_2\norm{w_3}_2,\quad w_1,w_2,w_3\in \Lambda, 
	\end{equation}	
	and 
	\begin{equation}\label{ho_alpha_2}
		\norm{D^2B\left(w_1\right)-D^2B\left(w_2\right)}_{\mathcal{L}\left(\Lambda_2,\Lambda_2;H_{\ts}\right)}\le C_0\norm{w_1-w_2}_2^\beta,\quad w_1,w_2\in \Lambda,\text{ for some }\beta\in\left(0,1\right].
	\end{equation}
\end{ass}
Once again,  we can assume that the constant $C_0$ in \eqref{direct_2}-\eqref{ho_alpha_2} is the same as the one in \eqref{bound_B} and  \eqref{direct}-\eqref{ho_alpha}.\\
By \eqref{direct_2}, we invoke the theorem of extension of uniformly continuous functions to extend, for every $w_1,w_2\in\Lambda$, the map $D^2B\left(w_1\right)(w_2,\cdot)\in\mathcal{L}\left(\Lambda_2; H_{\ts}\right)$ to an operator $\widebar {D^2B}\left(w_1\right)(w_2,\cdot)\in  \mathcal{L}\left(H; H_{\ts}\right)$ satisfying \eqref{direct_2} for all $w_3\in H$. It follows that, by linearity,
\begin{multline*}
	\norm{\widebar{D^2B}\left(w_1\right)\left(w_2\right)-\widebar{D^2B}\left(w_1\right)\left(w_3\right)}_{\mathcal{L}\left(H;H_{\ts}\right)}
	=
	\norm{\widebar{D^2B}\left(w_1\right)\left(w_2-w_3\right)}_{\mathcal{L}\left(H;H_{\ts}\right)}
	\\\le C_0\norm{w_2-w_3}_2,\quad w_1,w_2,w_3\in \Lambda,
\end{multline*}
hence we can extend (without changing notation) $\widebar{D^2B}(w_1)\in\mathcal{L}(H,H;H_{\ts})$, for all $w_1\in \Lambda$. At this point, by \eqref{ho_alpha_2} we infer that, for every $w_1,w_2\in \Lambda$,
\begin{equation}\label{ho_alpha22}
		\norm{\widebar{D^2B}\left(w_1\right)-\widebar{D^2B}\left(w_2\right)}_{\mathcal{L}\left(H,H;H_{\ts}\right)}
		=
		\norm{\widebar{D^2B}\left(w_1\right)-\widebar{D^2B}\left(w_2\right)}_{\mathcal{L}\left(\Lambda_2,\Lambda_2;H_{\ts}\right)}
		\le C_0\norm{w_1-w_2}_2^\beta,	
\end{equation}
whence, via another extension,  from now on we consider \begin{equation}\label{meglio1}
	\text{$\widebar{D^2B}\colon H\to \mathcal{L}(H,H;H_{\ts})$ satisfying (\ref{direct_2})-(\ref{ho_alpha22}) for every $w_i\in H,\,i=1,2,3.$}
\end{equation}
We want to show that $\widebar{B}$ is twice $H-$Fréchet differentiable, with $D^2\widebar{B}=\widebar{D^2B}.$ By Taylor's formula applied to $DB$, 
\begin{align}\label{save2}
	&\left(D\widebar{B}\left(w_2\right)-D\widebar{B}\left(w_1\right)-\widebar{D^2B}\left(w_1\right)\left(w_2-w_1\right)\right)w_3=r\left(w_1,w_2,w_3\right), \quad  w_1,w_2,w_3\in\Lambda,\text{ where}\\& 
\notag	r\left(x,y,z\right)= \left(\int_{0}^{1}\left(\widebar{D^2B}\left(x+h\left(y-x\right)\right)-\widebar{D^2B}\left(x\right)\right)\left(y-x\right)\dd h\right)z,\quad x,\,y,\,z\in H. 
\end{align}
We note that $r\colon H\times H\times H\to H_{\ts}$ is continuous. Indeed, for every $x,y,z \in H$ and every sequence $(\left(x_n,y_n,z_n\right))_n\subset H\times H\times H$ such that $(x_n,y_n,z_n)\to \left(x,y,z\right)$ as $n\to \infty$, with some algebraic computations we obtain, by \eqref{meglio1}, 
\begin{multline*}
	\norm{r\left(x_n,y_n,z_n\right)-r\left(x,y,z\right)}_{2,\ts}\le 2C_0\norm{y_n-x_n}_2\norm{z_n-z}_2\\+ C_0\norm{z}_2\left(2\norm{y_n-x_n+x-y}_2+\left(\frac{1}{\beta+1}\norm{y_n-y+x-x_n}^\beta_2+2\norm{x_n-x}_2^\beta\right)\norm{y-x}_2\right)\underset{n\to\infty}{\longrightarrow}0.
\end{multline*}
It then follows  from the continuity of $D\widebar{B}$ in $H$ and \eqref{meglio1} that  \eqref{save2} holds for every $w_1,w_2,w_3\in H$. Moreover, observing that, by \eqref{meglio1},  $\norm{r\left(x,y,\cdot\right)}_{\mathcal{L}(H;H_{\ts})}\le C_0(\beta+1)^{-1}\norm{y-x}^{1+\beta}_2,\,x,y\in H$, we conclude that 
\[
D\widebar{B}\left(w_2\right)-D\widebar{B}\left(w_1\right)-\widebar{D^2B}\left(w_1\right)\left(w_2-w_1\right)=\text{o}\left(\norm{w_2-w_1}_2\right),\quad w_1,w_2\in H.
\]
Therefore $\widebar{B}$ is twice $H-$Fréchet differentiable, with $D^2\widebar{B}=\widebar{D^2B}.$

We also note that, for every $w_1,\,w_2,\,w_3\in H$ and $0< t\le T$,  
		\begin{equation}\label{V0D1}
			\text{$D^2\widebar{B}(w_1)\left(w_2,w_3\right)\left(r,\cdot\right)\in H$ is of Volterra--type, for a.e. $r\in\left(0,t\right)$,}
		\end{equation} 
	and that 
	\begin{equation}\label{DPD1}
		\text{$D^2\widebar{B}(w_1)\left(w_2,w_3\right)(r,\cdot)$ depends on $w_i$ only via $\restr{w_i}{\left(0,t\right)},\,i=1,2,3$, for a.e. $r\in (0,t)$:}
	\end{equation}
 these properties are inherited from $D\widebar{B}$ (cfr.  \eqref{VO_D}-\eqref{D_P_D} in the discussion following Assumption \ref{B_2}).

In conclusion, we notice that, by \eqref{meglio1} (see also \eqref{direct_2}), 
\begin{equation}\label{massu}
	\norm{D^2\widebar{B}}_\infty=\sup_{w\in H}\norm{D^2\widebar{B}(w)}_{\mathcal{L}(H,H;H_{\ts})}\le C_0.
\end{equation}
 As a consequence, by the mean value theorem we deduce that \eqref{ho_alpha1} (see also \eqref{meglio}) holds with $\gamma=1$, i.e., under Assumption \ref{B_3} the map $D\widebar{B}\colon H\to \mathcal{L}(H;H_{\ts})$ is globally Lipschitz--continuous. Since $D\widebar B$ is also bounded (see \eqref{direct}-\eqref{meglio}), in what follows we suppose, without loss of generality, that 
 \begin{equation}\label{unavolta}
 	\text{under Assumption \ref{B_3}, $D\widebar B\colon H\to \mathcal{L}(H;H_{\ts})$ satisfies (\ref{meglio}) with $\gamma=\beta$. }
 \end{equation}

The next result shows that, in the framework of this subsection, the solution $w_t^{s,\phi}$  of \eqref{eq_B1}, considered as a map from $H$ to $\mathcal{H}$, is twice $H-$Fréchet differentiable.
\begin{theorem}\label{diff_w2_thm}
	Under Assumption \ref{B_3}, for every $0\le s\le t \le T$, the mapping $w^{s,\cdot}_t\in C^{2+\beta}\left({H};\mathcal{H}\right)$. In particular, for every $\phi,\,\psi,\,\eta\in {H}$, 	$D^2w^{s,\phi}_{t} \left(\psi,\eta\right)$ is the unique solution in $\mathcal{H}$ of the following equation:  
	\begin{equation}\label{frec_w2}
		D^2w^{s,\phi}_{t} \left(\psi,\eta\right)= \int_{s}^{t}\Big(D^2\widebar{B}\left(w^{s,\phi}_t\right)\left(Dw^{s,\phi}_t\psi,Dw_t^{s,\phi}\eta\right)
		+D\widebar{B}\left(w^{s,\phi}_t\right)D^2w^{s,\phi}_t\left(\psi,\eta\right)\Big)\left(r,\cdot\right)\dd r.
	\end{equation} 
	Furthermore, there exists a constant $C_3=C_3(d,T)>0$ such that, for every $\phi,\psi,\eta,\theta\in {H}$, $\mathbb{P}-$a.s.,
	\begin{equation}\label{alpha-h2}
		\norm{D^2w^{s,\phi}_t\left(\eta,\theta\right)}_2\le C_3\norm{\eta}_2\norm{\theta}_2,\qquad 
		\norm{\left(D^2w^{s,\phi}_t-D^2w^{s,\psi}_t\right)\left(\eta,\theta\right)}_2\le C_3\norm{w^{s,\phi}_t-w^{s,\psi}_t}_{2}^\beta\norm{\eta}_2\norm{\theta}_2.
	\end{equation}
\end{theorem}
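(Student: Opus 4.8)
The plan is to follow closely the scheme of the proof of Theorem~\ref{diff_w_thm}, now letting $D\widebar B$ and $D^2\widebar B$ play the roles that $B$ and $D\widebar B$ played there, and treating the already-constructed first differential $Dw^{s,\cdot}_t$ together with its bounds \eqref{alpha-h} and the first-order remainder estimate \eqref{ma2} as known data. Fix $0\le s\le t\le T$ and $\phi,\psi,\eta\in H$. I first observe that \eqref{frec_w2} is an \emph{affine} equation in the unknown $D^2w^{s,\phi}_t(\psi,\eta)$: its inhomogeneous term $\int_s^t D^2\widebar B(w^{s,\phi}_t)(Dw^{s,\phi}_t\psi,Dw^{s,\phi}_t\eta)(r,\cdot)\,\dd r$ belongs to $\mathcal H$ by \eqref{massu} and the bound \eqref{bound_derw}, while its linear part $w\mapsto\int_s^t D\widebar B(w^{s,\phi}_t)(r,w)\,\dd r$ is precisely the contraction already controlled in \eqref{claro1}. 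Introducing the same equispaced partition $\{t_k\}_{k=0}^N$ of $[s,t]$ with $C_0\sqrt{T/N}<1$, I would solve \eqref{frec_w2} by a fixed-point argument on $[t_0,t_1]$ and then extend the resulting bilinear map step by step across $[s,t]$ exactly as in Theorems~\ref{well_pos} and~\ref{diff_w_thm}, using the Volterra-type property \eqref{V0D1}, the locality \eqref{DPD1} of $D^2\widebar B$ and the cocycle relation \eqref{cocycle} (cf.\ Remark~\ref{classical}) to glue the pieces. The first inequality in \eqref{alpha-h2} then follows directly from \eqref{frec_w2}, \eqref{massu} and \eqref{bound_derw}, absorbing the linear contribution into the left-hand side on each subinterval.

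The core step is to verify that this candidate is the genuine second Fréchet differential, i.e.\ that, uniformly over $\norm{\psi}_2\le1$,
\[
Dw^{s,\phi+h}_t\psi-Dw^{s,\phi}_t\psi-D^2w^{s,\phi}_t(h,\psi)=\text{o}\left(\norm{h}_2\right)\quad\text{in }\mathcal H.
\]
Writing $R_h$ for the left-hand side, subtracting the two copies of \eqref{frec_w} (at $\phi+h$ and at $\phi$) and comparing with \eqref{frec_w2}, the integrand decomposes into: a term $D\widebar B(w^{s,\phi+h}_t)(r,R_h)$, which produces the contraction; a term arising from the second-order Taylor expansion of $D\widebar B$ along $w^{s,\phi+h}_t-w^{s,\phi}_t$, whose leading part is $D^2\widebar B(w^{s,\phi}_t)(Dw^{s,\phi}_t h,Dw^{s,\phi}_t\psi)$ and therefore cancels the inhomogeneous term of \eqref{frec_w2}; and several genuinely higher-order remainders.

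The main obstacle is showing that all these remainders are $\text{o}(\norm{h}_2)$, in fact of order $\norm{h}_2^{1+\beta}$. This demands combining, within one estimate: the first-order expansion $w^{s,\phi+h}_t-w^{s,\phi}_t=Dw^{s,\phi}_t h+O(\norm{h}_2^{1+\beta})$ from \eqref{ma2} (with $\gamma=\beta$ by \eqref{unavolta}); the $\beta$-Hölder continuity \eqref{ho_alpha22} of $D^2\widebar B$, used to replace $D^2\widebar B$ evaluated along the segment between $w^{s,\phi}_t$ and $w^{s,\phi+h}_t$ by $D^2\widebar B(w^{s,\phi}_t)$ at cost $O(\norm{h}_2^\beta)$; the Lipschitz continuity of $D\widebar B$ (again \eqref{unavolta}), applied to the term $(D\widebar B(w^{s,\phi+h}_t)-D\widebar B(w^{s,\phi}_t))D^2w^{s,\phi}_t(h,\psi)$, which is $O(\norm{h}_2^{1+\beta})$ thanks to the uniform bound in \eqref{alpha-h2} just established; and the Lipschitz dependence of $Dw^{s,\cdot}_t$ on the datum from \eqref{alpha-h} together with Corollary~\ref{lip}. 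Absorbing the contraction on each $[t_{k-1},t_k]$ and iterating $N$ times yields $\norm{R_h}_{\mathcal H}=O(\norm{h}_2^{1+\beta})$, which simultaneously proves that $D^2w^{s,\phi}_t$ is the second differential and delivers the $\beta$-Hölder control.

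Finally I would prove the second inequality in \eqref{alpha-h2} by subtracting the copies of \eqref{frec_w2} at $\phi$ and $\psi$, bounding the difference through \eqref{ho_alpha22}, \eqref{massu}, the first-order bounds \eqref{alpha-h} and the uniform second-order bound, and iterating across the partition as in Theorem~\ref{diff_w_thm}. Taking expectations, invoking Jensen's inequality and Corollary~\ref{lip} with $q=2\beta$ (to convert $\norm{w^{s,\phi}_t-w^{s,\psi}_t}_2^\beta$ into $\norm{\phi-\psi}_2^\beta$), one concludes that $D^2w^{s,\cdot}_t\in C^\beta\big(H;\mathcal L(H,H;\mathcal H)\big)$, i.e.\ $w^{s,\cdot}_t\in C^{2+\beta}(H;\mathcal H)$, completing the proof.
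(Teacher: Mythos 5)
Your proposal is correct and follows essentially the same route as the paper's proof: a stepwise fixed-point construction on the partition glued via the Volterra-type and locality properties, verification that the candidate is the second Fréchet differential by subtracting the two copies of \eqref{frec_w} and comparing with \eqref{frec_w2} (your decomposition into contraction, cancelling Taylor term, and $O(\norm{h}_2^{1+\beta})$ remainders matches the paper's terms $\mathbf{\upperRomannumeral{1}}_1$--$\mathbf{\upperRomannumeral{4}}_1$ up to harmless algebraic rearrangement), and the same final Hölder estimate. The only nitpick is your invocation of Corollary \ref{lip} ``with $q=2\beta$'': since that corollary requires $q\ge2$, one should (as the paper does here) apply Jensen's inequality first and then Corollary \ref{lip} with $q=2$, but this is exactly the combination you describe and does not affect the argument.
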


\begin{proof}
	Fix $0\le s \le t\le T$ and $\phi\in H$.  We first want to prove the well–posedness in $\mathcal{H}$ of the equation
	\begin{equation}\label{claro2}
		w=\int_{s}^{t}\Big(D^2\widebar{B}\left(w^{s,\phi}_t\right)\left(Dw^{s,\phi}_t\psi,Dw_t^{s,\phi}\eta\right)
		+
		D\widebar{B}\left(w^{s,\phi}_t\right)w\Big)\left(r,\cdot\right)\dd r,\quad \psi,\,\eta\in H.
	\end{equation}
	Consider $N=N\left(d,T\right)\in \mathbb{N}$ so big that $C_0\sqrt{T/N}<1,$ where $C_0=C_0\left(d,T\right)$ is the constant in Assumptions \ref{B_1}-\ref{B_2}-\ref{B_3}. In addition, take an equispaced partition $\{t_k\}_{k=0}^N$ of $\left[s,t\right]$ where $t_0=s$ and $t_N=t$: its mesh $\Delta\le T/N$. 
	Under Assumption \ref{B_3}, the bound in \eqref{claro1} holds and allows to employ a fixed point argument as in the proof of Theorem \ref{well_pos} (see also Theorem \ref{diff_w_thm})
	to deduce the existence of a unique solution $\widebar{w}^{\psi,\eta}_1\in\mathcal{H}$ of \eqref{claro2} with $t_1$ instead of $t$, for every $\psi,\eta\in H$. 
	
We claim that the operator $D^2w^{s,\phi}_{t_1}\colon H\times H\to \mathcal{H}$ defined by $D^2w^{s,\phi}_{t_1}(\psi,\eta)=\widebar{w}_1^{\psi,\eta},\,\psi,\eta\in H,$ is the second–order Fréchet differential of $w^{s,\phi}_{t_1}$. Indeed, considering that $Dw^{s,\phi}_{t_1}\in\mathcal{L}(H;\mathcal{H})$,  $D\widebar{B}(w^{s,\phi}_{t_1})\in \mathcal{L}(H;{H}_{\ts})$ and $D^2\widebar{B}(w_{t_1}^{s,\phi})\in\mathcal{L}(H,H;{H}_{\ts})$, the fact that $D^2w^{s,\phi}_{t_1}$ is bilinear directly follows from \eqref{claro2}. As for the boundedness, by \eqref{meglio}-\eqref{meglio1} (see also \eqref{direct}-\eqref{direct_2}) and  \eqref{alpha-h}   we can compute, applying Bochner's theorem to \eqref{frec_w2}, for some constant $C_2=C_2(d,T)>0$, 
\begin{multline}\label{ste3}
	\norm{D^2w^{s,\phi}_{t_1}\left(\psi,\eta\right)}_{2}
	\le C_0\sqrt{\Delta}\left(\norm{Dw^{s,\phi}_{t_1}\psi}_2\norm{Dw^{s,\phi}_{t_1}\eta}_2
	+\norm{D^2w_{t_1}^{s,\phi}(\psi,\eta)}_2
	\right)
	\\
	\le C_0\sqrt{{T/N}} \left(C_2\norm{\psi}_2\norm{\eta}_2+\norm{D^2w_{t_1}^{s,\phi}(\psi,\eta)}_2\right), \quad \mathbb{P}-\text{a.s., } \psi,\eta\in {H}.
\end{multline}
Hence 
\begin{equation}\label{bound_der3}
\norm{D^2w^{s,\phi}_{t_1}\left(\psi,\eta\right)}_{2}
\le \left(1-C_0\sqrt{T/N}\right)^{-1}C_0C_2\sqrt{T/N}\norm{\psi}_2\norm{\eta}_2,	\quad \mathbb{P}-\text{a.s., }\psi,\eta\in {H}.
\end{equation}
We now observe that, by Taylor's formula applied to $D\widebar{B}$ (cfr. \eqref{save2}), from \eqref{frec_w}-\eqref{frec_w2} we have, for every $h\in H$, 
	\begin{equation}\label{male}
		\norm{Dw^{s,\phi+h}_{t_1}-Dw^{s,\phi}_{t_1}-D^2w^{s,\phi}_{t_1}h}_{\mathcal{L}\left(H;\mathcal{H}\right)}\le  \mathbf{\upperRomannumeral{1}}_1+\mathbf{\upperRomannumeral{2}}_1+\mathbf{\upperRomannumeral{3}}_1+\mathbf{\upperRomannumeral{4}}_1,
	\end{equation}
where we set 
\begin{align*}
	\mathbf{\upperRomannumeral{1}}_1&=\sup_{\norm{\eta}_{2}\le 1}
\mathbb{E}\left[\norm{\int_{s}^{t_1}D\widebar{B}\left(w^{s,\phi}_{t_1}\right)\left(Dw_{t_1}^{s,\phi+h}\eta-Dw_{t_1}^{s,\phi}\eta-D^2w^{s,\phi}_{t_1}\left(h,\eta\right)\right)\left(r,\cdot\right)\,\dd r}^2_2\right]^{\frac{1}{2}},
	\\
		\mathbf{\upperRomannumeral{2}}_1&=\sup_{\norm{\eta}_{2}\le 1}\mathbb{E}\left[\norm{\int_{s}^{t_1}\left(D^2\widebar{B}\left(w^{s,\phi}_{t_1}\right)\left(w_{t_1}^{s,\phi+h}-w^{s,\phi}_{t_1}-Dw^{s,\phi}_{t_1}h,Dw^{s,\phi}_{t_1}\eta\right)
		\right)\left(r,\cdot\right)\,\dd r}_2^2\right]^\frac{1}{2},
		\\
		\mathbf{\upperRomannumeral{3}}_1&=\sup_{\norm{\eta}_{2}\le 1}
	\mathbb{E}\left[\norm{\int_{s}^{t_1}\left(D\widebar{B}\left(w^{s,\phi+h}_{t_1}\right)-D\widebar{B}\left(w^{s,\phi}_{t_1}\right)\right)\left(Dw_{t_1}^{s,\phi+h}\eta-Dw_{t_1}^{s,\phi}\eta\right)\left(r,\cdot\right)\,\dd r}^2_2\right]^{\frac{1}{2}},
	\\
	\mathbf{\upperRomannumeral{4}}_1&=\sup_{\norm{\eta}_{2}\le 1}
		\mathbb{E}\left[\bigg|\hspace{-.09em}\bigg|\int_{s}^{t_1}\left(\int_{0}^{1}\left(D^2\widebar{B}\left(w^{s,\phi}_{t_1}+v\left(w^{s,\phi+h}_{t_1}-w^{s,\phi}_{t_1}\right)\right)
		\right.\right.\right.\\&\hspace{16em}	\left.\left.\left.
		-D^2\widebar{B}\left(w^{s,\phi}_{t_1}\right)\right)\left(w^{s,\phi+h}_{t_1}-w^{s,\phi}_{t_1}\right)\dd v\right)
			Dw_{t_1}^{s,\phi}\eta\,\left(r,\cdot\right)\dd r\bigg|\hspace{-.09em}\bigg|^2_2\right]^{\frac{1}{2}}.
\end{align*}
By \eqref{meglio} (see, in particular, \eqref{direct})
\begin{multline*}
	\left|\mathbf{\upperRomannumeral{1}}_1\right|\le C_0\sqrt{T/N}\sup_{\norm{\eta}_{2}\le 1}\mathbb{E}\left[	\norm{Dw^{s,\phi+h}_{t_1}\eta-Dw^{s,\phi}_{t_1}\eta-D^2w^{s,\phi}_{t_1}\left(h,\eta\right)}_2^2\right]^{\frac{1}{2}}
	\\= C_0\sqrt{T/N}	\norm{Dw^{s,\phi+h}_{t_1}-Dw^{s,\phi}_{t_1}-D^2w^{s,\phi}_{t_1}h}_{\mathcal{L}\left(H;\mathcal{H}\right)}.
\end{multline*}
Moreover, considering   \eqref{ho_alpha1}-\eqref{alpha-h} (see also \eqref{unavolta}) and Corollary \ref{lip}, which we can apply with $q=2(1+\beta)$ because $\phi,h\in H\subset \mathcal{H}^q$ (see also ), for some $C_1=C_1(\beta,d,T)>0$ we can write
\begin{multline*}
	\left|\mathbf{\upperRomannumeral{3}}_1\right|
	\le \sqrt{\Delta}
	\sup_{\norm{\eta}_{2}\le 1}\mathbb{E}\left[\norm{
\left(D\widebar{B}\left(w^{s,\phi+h}_{t_1}\right)-D\widebar{B}\left(w^{s,\phi}_{t_1}\right)\right)\left(Dw_{t_1}^{s,\phi+h}\eta-Dw_{t_1}^{s,\phi}\eta\right)	
}_{2,\ts}^2\right]^{\frac{1}{2}}
	\\
	\le \norm{D^2\widebar B}_\infty C_2\sqrt{T/N} \sup_{\norm{\eta}_{2}\le 1}\mathbb{E}\left[\norm{w^{s,\phi+h}_{t_1}-w^{s,\phi}_{t_1}}^{2\left(1+\beta\right)}_2\norm{\eta}_2^2\right]^\frac{1}{2}
	\le C_0C_{1}^{1+\beta}C_2\sqrt{T/N}\norm{h}_2^{1+\beta},
\end{multline*}
where we also use the mean value theorem on $D\widebar{B}$ and \eqref{massu}.
As for $\mathbf{\upperRomannumeral{2}}_1$, by \eqref{alpha-h}-\eqref{meglio1} we compute
\begin{align*}
	\left|\mathbf{\upperRomannumeral{2}}_1\right|
	&\le 
	\sqrt{\Delta}\sup_{\norm{\eta}_{2}\le 1}\mathbb{E}\left[
	\norm{
D^2\widebar{B}\left(w^{s,\phi}_{t_1}\right)\left(w_{t_1}^{s,\phi+h}-w^{s,\phi}_{t_1}-Dw^{s,\phi}_{t_1}h,Dw^{s,\phi}_{t_1}\eta\right)	
}^2_{2,\ts}
	\right]^{\frac{1}{2}}
	\\&
	\le C_0C_2\sqrt{\Delta}\sup_{\norm{\eta}_{2}\le 1}\mathbb{E}\left[\norm{w_{t_1}^{s,\phi+h}-w^{s,\phi}_{t_1}-Dw^{s,\phi}_{t_1}h}_2^2\norm{\eta}_2^2\right]^{\frac{1}{2}}
	\\&\le 
	C_0C_2\sqrt{T/N}\norm{w_{t_1}^{s,\phi+h}-w^{s,\phi}_{t_1}-Dw^{s,\phi}_{t_1}h}_{\mathcal{H}}
	=
	\text{o}\left(\norm{h}_2\right).
\end{align*}
Finally, again by \eqref{alpha-h}-\eqref{meglio1} (see also \eqref{ho_alpha22}) and Corollary \ref{lip}, employed with $q=2(1+\beta)$, we have 
\begin{align*}
	\left|\mathbf{\upperRomannumeral{4}}_1\right|
	&\le 
	\sqrt{\Delta}\mathbb{E}\left[\left(\int_{0}^{1}
		\norm{
		D^2\widebar{B}\left(w^{s,\phi}_{t_1}+v\left(w^{s,\phi+h}_{t_1}-w^{s,\phi}_{t_1}\right)\right)
		-D^2\widebar{B}\left(w^{s,\phi}_{t_1}\right)}_{\mathcal{L}(H,H;H_{\ts})}\dd v\right)^2
\right.	\\&\hspace{28em}\left.\times 
	\norm{w_{t_1}^{s,\phi+h}-w_{t_1}^{s,\phi}}_{2}^2
	\norm{Dw_{t_1}^{s,\phi}\eta}_{2}^2\right]^{\frac{1}{2}}
\\&	\le C_0C_2\sqrt{T/N}\sup_{\norm{\eta}_{2}\le 1}\mathbb{E}\left[\norm{w_{t_1}^{s,\phi+h}-w_{t_1}^{s,\phi}}_2^{2\left(1+\beta\right)}\norm{\eta}_2^2\right]^{\frac{1}{2}}\le C_0C_{1}^{1+\beta}C_{2}\sqrt{T/N}\norm{h}_2^{1+\beta}.
\end{align*}
Going back to \eqref{male}, we conclude that 
\begin{equation}\label{ma3}
	\norm{Dw^{s,\phi+h}_{t_1}-Dw^{s,\phi}_{t_1}-D^2w^{s,\phi}_{t_1}h}_{\mathcal{L}\left(H;\mathcal{H}\right)}
	\le\left(1-C_0\sqrt{T/N}\right)^{-1} \left(\mathbf{\upperRomannumeral{2}}_1+\mathbf{\upperRomannumeral{3}}_1+\mathbf{\upperRomannumeral{4}}_1\right)
	=\text{o}\left(\norm{h}_2\right),\quad h\in H.
\end{equation}
This shows that $D^2w^{s,\phi}_{t_1}$ is the second–order Fréchet differential of $w^{s,\phi}_{t_1}$, as desired.

Next, consider 
\begin{equation}\label{add2}
w
	=D^2w_{t_1}^{s,\phi}\left(\psi,\eta\right)\\+
	\int_{t_1}^{t_2}\Big(D^2\widebar{B}\left(w^{s,\phi}_{t_2}\right)\left(Dw^{s,\phi}_{t_2}\psi,Dw_{t_2}^{s,\phi}\eta\right)
	+D\widebar{B}\left(w^{s,\phi}_{t_2}\right)w\Big)\left(r,\cdot\right)\dd r,\quad \psi,\eta \in H.
\end{equation}
Arguing as in the previous step, we infer the well–posedness of this equation in $\mathcal{H}$: we denote by $\widebar{w}_2^{\psi,\eta}\in\mathcal H$ its unique solution, for every $\psi,\,\eta\in H.$

Given $\psi,\,\eta\in H$, we now show that $\widebar{w}_2^{\psi,\eta}$ is the unique solution of \eqref{claro2} with $t_2$ instead of $t$. 
By the Volterra–type property of $D^2\widebar{B}$ [resp., $D\widebar{B}$] in \eqref{V0D1} [resp., \eqref{VO_D}] and \eqref{add2} we have,  $\mathbb{P}-$a.s.,
\[
\restr{\widebar{w}_2^{\psi,\eta}}{\left(0,t_1\right)}=\restr{D^2w^{s,\phi}_{t_1}(\psi,\eta)}{\left(0,t_1\right)}.
\]
Moreover, since  $Dw^{s,\phi}_{t_2}\psi$ satisfies \eqref{add}, we infer that, $\mathbb{P}-$a.s.,
\[
	\restr{D{w}_{t_2}^{s,\phi}\psi}{\left(0,t_1\right)}=\restr{D{w}_{t_1}^{s,\phi}\psi}{\left(0,t_1\right)},
\]
with an analogous result holding for $\eta$. 
Consequently, recalling also \eqref{wow} and Remark \ref{classical}, by the property of $D^2\widebar{B}$ [resp., $D\widebar{B}$] in \eqref{DPD1} [resp., \eqref{D_P_D}], from \eqref{add2} we obtain, $\mathbb{P}-$a.s., 
\begin{align}\label{dai2}
	\notag\widebar{w}_2^{\psi,\eta}
	&=
	\int_{s}^{t_1}\Big(D^2\widebar{B}\left(w^{s,\phi}_{t_1}\right)\left(Dw^{s,\phi}_{t_1}\psi,Dw_{t_1}^{s,\phi}\eta\right)
	+
	D\widebar{B}\left(w^{s,\phi}_{t_1}\right)D^2w^{s,\phi}_{t_1}(\psi,\eta)\Big)\left(r,\cdot\right)\dd r
	\\	&\qquad \notag\qquad \qquad \qquad 	+\int_{t_1}^{t_2}\Big(D^2\widebar{B}\left(w^{s,\phi}_{t_2}\right)\left(Dw^{s,\phi}_{t_2}\psi,Dw_{t_2}^{s,\phi}\eta\right)
	+D\widebar{B}\left(w^{s,\phi}_{t_2}\right)\widebar{w}_2^{\psi,\eta}\Big)\left(r,\cdot\right)\dd r
\\	&=
	\int_{s}^{t_2}\Big(D^2\widebar{B}\left(w^{s,\phi}_{t_2}\right)\left(Dw^{s,\phi}_{t_2}\psi,Dw_{t_2}^{s,\phi}\eta\right)
	+
	D\widebar{B}\left(w^{s,\phi}_{t_2}\right)\widebar{w}_2^{\psi,\eta}\Big)\left(r,\cdot\right)\dd r,
\end{align}
where we also use the fact that  $D^2w_{t_1}^{s,\phi}(\psi,\eta)$ solves \eqref{claro2} with $t_1$ instead of $t$.
Hence $\widebar{w}_2^{\psi,\eta}$ solves \eqref{claro2} with $t$ replaced by $t_2$. In order to prove that it is in fact the unique solution of this equation,  we consider another random variable $\widetilde{w}\in\mathcal{H}$ satisfying \eqref{dai2}. Then, by \eqref{V0D1}-\eqref{DPD1},
\begin{equation}\label{uni2}
	1_{(0,t_1)}\widetilde{w}=1_{(0,t_1)}\left(
		\int_{s}^{t_1}\Big(D^2\widebar{B}\left(w^{s,\phi}_{t_1}\right)\left(Dw^{s,\phi}_{t_1}\psi,Dw_{t_1}^{s,\phi}\eta\right)
	+
	D\widebar{B}\left(w^{s,\phi}_{t_1}\right)1_{(0,t_1)}\widetilde{w}\Big)\left(r,\cdot\right)\dd r
	\right).
\end{equation}
We observe that also $1_{(0,t_1)}\widebar{w}^{\psi,\eta}_1\in\mathcal{H}$ satisfies \eqref{uni2}. Therefore we can perform the same computations as in \eqref{metterla:tardi} to deduce that 
\[
1_{(0,t_1)}\widetilde{w}=1_{(0,t_1)}\widebar{w}^{\psi,\eta}_1,\quad \mathbb{P}-\text{a.s.}
\]
Going back to \eqref{dai2}, by the previous equality we have, $\mathbb{P}-$a.s.,
\begin{align*}
	\widetilde{w}&=	\int_{s}^{t_1}\Big(D^2\widebar{B}\left(w^{s,\phi}_{t_1}\right)\left(Dw^{s,\phi}_{t_1}\psi,Dw_{t_1}^{s,\phi}\eta\right)
	+
	D\widebar{B}\left(w^{s,\phi}_{t_1}\right)\widetilde{w}\Big)\left(r,\cdot\right)\dd r
	\\&
	\hspace{12em}
	+\int_{t_1}^{t_2}\Big(D^2\widebar{B}\left(w^{s,\phi}_{t_2}\right)\left(Dw^{s,\phi}_{t_2}\psi,Dw_{t_2}^{s,\phi}\eta\right)
	+D\widebar{B}\left(w^{s,\phi}_{t_2}\right)\widetilde{w}\Big)\left(r,\cdot\right)\dd r
\\&	=
	\widebar{w}_1^{\psi,\eta}	+\int_{t_1}^{t_2}\Big(D^2\widebar{B}\left(w^{s,\phi}_{t_2}\right)\left(Dw^{s,\phi}_{t_2}\psi,Dw_{t_2}^{s,\phi}\eta\right)
	+D\widebar{B}\left(w^{s,\phi}_{t_2}\right)\widetilde{w}\Big)\left(r,\cdot\right)\dd r.
\end{align*}
It follows that $\widetilde{w}$ satisfies \eqref{add2}: by uniqueness, we obtain $\widetilde{w}=\widebar{w}^{\psi,\eta}_2$. Hence $\widebar{w}_2^{\psi,\eta}$ is the unique solution of \eqref{claro2} in $\mathcal{H}$ with $t_2$ instead of $t$.

We define the operator $D^2w^{s,\phi}_{t_2}\colon H\times H\to \mathcal{H}$  by $D^2w^{s,\phi}_{t_2}(\psi,\eta)=\widebar{w}_2^{\psi,\eta},\,\psi,\eta\in H,$ and claim that it is the second–order Fréchet differential of $w^{s,\phi}_{t_2}$. Indeed, as we have argued for $D^2w^{s,\phi}_{t_1}$, the map $D^2w^{s,\phi}_{t_2}$ is bilinear thanks to the the well–posedness of \eqref{dai2}. As for the boundedness, arguing as in \eqref{ste3}, by \eqref{bound_der3}-\eqref{add2} we can write, for every  $\psi,\,\eta \in H$, $ \mathbb{P}-\text{a.s.}$,
\begin{align*}
	\norm{Dw_{t_2}^{s,\phi}(\psi,\eta)}_2&\le 
	\norm{D^2w_{t_1}^{s,\phi}(\psi,\eta)}_2
	\\
	&\qquad \qquad 
	+ \int_{t_1}^{t_2}\norm{\left(
	D^2\widebar{B}\left(w^{s,\phi}_{t_2}\right)\left(Dw^{s,\phi}_{t_2}\psi,Dw_{t_2}^{s,\phi}\eta\right)
	+D\widebar{B}\left(w^{s,\phi}_{t_2}\right)Dw_{t_2}^{s,\phi}(\psi,\eta)\right)(r,\cdot)
}_2 \dd r
	\\&
	\le C_0C_2\left(\left(1-C_0\sqrt{T/N}\right)^{-1}\sqrt{T/N}+\sqrt{\Delta}\right)\norm{\psi}_2\norm{\eta}_2
	+
	\sqrt{\Delta}C_0\norm{D^2w_{t_2}^{s,\phi}(\psi,\eta)}_2,\quad
\end{align*}
whence 
\begin{equation}\label{bound_ste3}
	\norm{Dw_{t_2}^{s,\phi}(\psi,\eta)}_2\le 2C_0C_2\left(1-C_0\sqrt{T/N}\right)^{-2}\sqrt{T/N}\norm{\psi}_2\norm{\eta}_2,\quad \mathbb{P}-\text{a.s.}, \,\psi,\,\eta \in H.
\end{equation}
Moreover, combining  \eqref{add} with \eqref{add2}, we can argue as in \eqref{male} to infer that 
\[
\norm{Dw^{s,\phi+h}_{t_2}-Dw^{s,\phi}_{t_2}-D^2w^{s,\phi}_{t_2}h}_{\mathcal{L}\left(H;\mathcal{H}\right)}=
		\text{o}\left(\norm{h}_2\right),\quad h\in H,
\]
which shows that $D^2w^{s,\phi}_{t_2}$ is the second–order Fréchet differential of $w^{s,\phi}_{t_2}$, as desired.

	This reasoning can be repeated $N-$times to deduce that the operator 
	 $D^2w^{s,\phi}_{t}\colon H\times H\to \mathcal{H}$  defined by $D^2w^{s,\phi}_{t}(\psi,\eta)=\widebar{w}_N^{\psi,\eta},$ where $\widebar{w}_N^{\psi,\eta}$ is the unique solution of \eqref{claro2} in $\mathcal{H}$, for every $\psi,\,\eta\in H$, is the second–order Fréchet differential of $w^{s,\phi}_t$. In particular, the first bound in \eqref{alpha-h2} is true, because (cfr. \eqref{bound_der3}-\eqref{bound_ste3})
	\begin{align}\label{bound_derw2}
		&\notag\norm{D^2w^{s,\phi}_t\left(\psi,\eta\right)}_{2}\le NC_0C_2\left(1-C_0\sqrt{T/N}\right)^{-N}\sqrt{T/N}\norm{\psi}_2\norm{\eta}_2\eqqcolon\widetilde{C}\norm{\psi}_2\norm{\eta}_2,\\
		&\qquad  \mathbb{P}-\text{a.s., }\phi,\psi,\eta\in {H}.
	\end{align}
	
	As for the second inequality in \eqref{alpha-h2},  by  \eqref{meglio}, \eqref{alpha-h}, \eqref{meglio1}, \eqref{unavolta}, \eqref{frec_w2} and \eqref{bound_derw2} we compute, for every $\phi,\psi,\eta,\theta\in {H}$, $\mathbb{P}-$a.s.,
	\begin{align*}
	&\norm{D^2w_{t_1}^{s,\phi}\left(\eta,\theta\right)-D^2w_{t_1}^{s,\psi}\left(\eta,\theta\right)}_2
	\\& \,\,=
\bigg|\hspace{-.09em}\bigg|\int_{s}^{t_1}\Big(
	D^2\widebar{B}\left(w^{s,\phi}_t\right)\left(Dw^{s,\phi}_{t}\eta,Dw_{t}^{s,\phi}\theta\right)
	-D^2\widebar{B}\left(w^{s,\psi}_t\right)\left(Dw^{s,\psi}_{t}\eta,Dw_{t}^{s,\psi}\theta\right)
	\\&\hspace{13em}+D\widebar{B}\left(w^{s,\phi}_t\right)D^2w^{s,\phi}_{t_1}\left(\eta,\theta\right)
	-D\widebar{B}\left(w^{s,\psi}_t\right)D^2w^{s,\psi}_{t_1}\left(\eta,\theta\right)\Big)\left(r,\cdot\right)\dd r\bigg|\hspace{-.09em}\bigg|_2
	\\&\,\, \le \sqrt{\Delta}\Big(
	\norm{\left(D^2\widebar{B}\left(w^{s,\phi}_t\right)-D^2\widebar{B}\left(w^{s,\psi}_t\right)\right)\left(Dw^{s,\phi}_{t}\eta,Dw_{t}^{s,\phi}\theta\right)}_{2,\ts }
\\&\,\,\quad +\!
	\norm{D^2\widebar{B}\left(w^{s,\psi}_t\right)\!\left(\left(Dw^{s,\phi}_{t}-Dw^{s,\psi}_t\right)\eta,Dw_{t}^{s,\phi}\theta\right)}_{2,\ts }\!
	+\!	\norm{D^2\widebar{B}\left(w^{s,\psi}_t\right)\!\left(Dw^{s,\psi}_t\eta,\left(Dw_{t}^{s,\phi}-Dw_{t}^{s,\psi}\right)\theta\right)}_{2,\ts }
	\\&\,\, \quad+
		\norm{\left(D\widebar{B}\left(w^{s,\phi}_t\right)-D\widebar{B}\left(w^{s,\psi}_t\right)\right)D^2w^{s,\phi}_{t_1}\left(\eta,\theta\right)}_{2,\ts}
	 +\norm{D\widebar{B}\left(w^{s,\psi}_t\right)\left(D^2w^{s,\phi}_{t_1}\left(\eta,\theta\right)-D^2w^{s,\psi}_{t_1}\left(\eta,\theta\right)\right)}_{2,\ts}\Big)
	\\&\,\,\le 
	C_0\sqrt{T/N}\Big(\left(\widetilde{C}+3C_2\right)\norm{w^{s,\phi}_t-w^{s,\psi}_t}^\beta_2\norm{\eta}_2\norm{\theta}_2
	+\norm{D^2w_{t_1}^{s,\phi}\left(\eta,\theta\right)-D^2w_{t_1}^{s,\psi}\left(\eta,\theta\right)}_2\Big)
	,
\end{align*}	
whence 
\begin{equation*}
\norm{D^2w_{t_1}^{s,\phi}\left(\eta,\theta\right)-D^2w_{t_1}^{s,\psi}\left(\eta,\theta\right)}_2
\le 
\left(1-C_0\sqrt{T/N}\right)^{-1}
 C_0\left(\widetilde{C}+3C_2\right)\sqrt{T/N}
\norm{w^{s,\phi}_{t}-w^{s,\psi}_{t}}_2^{\beta}\norm{\eta}_2\norm{\theta}_2.
\end{equation*}
By \eqref{add2}, we sequentially iterate this computation to obtain the second inequality in \eqref{alpha-h2} with $$C_3=\max\{\widetilde{C},N\left(1-C_0\sqrt{T/N}\right)^{-N}C_0(\widetilde{C}+3C_2)\sqrt{T/N}\}.$$
Thus, taking expectations and using   Corollary \ref{lip} with $q=2$, by Jensen's inequality we deduce  that, for some constant $c>0$, 
\begin{multline*}
	\norm{D^2w_{t}^{s,\phi}-D^2w_{t}^{s,\psi}}_{\mathcal{L}\left(H,H;\mathcal{H}\right)}=\sup_{\norm{\eta}_{2},\norm{\theta}_2\le 1}\mathbb{E}\left[\norm{D^2w_{t}^{s,\phi}\left(\eta,\theta\right)-D^2w_{t}^{s,\psi}\left(\eta,\theta\right)}_2^2\right]^\frac{1}{2}
	\\\le 
	C_3\mathbb{E}\left[\norm{w^{s,\phi}_{t}-w^{s,\psi}_{t}}_2^{2\beta}\right]^{\frac{1}{2}}
	\le c\norm{\phi-\psi}_2^\beta,\quad \phi,\,\psi\in {H}.
\end{multline*}
This shows that $D^2w^{s,\cdot}_t\in C^{\beta}\left(H;\mathcal{L}(H,H;\mathcal{H})\right)$, completing the proof.
\end{proof}

\section{The Kolmogorov equation}\label{sec_KO}
Recall the definition of the map $\sigma\colon[0,T]\to\mathcal{L}(\mathbb{R}^d;H)  $ in \eqref{kernel}. Given $u\colon\left[0,T\right]\times H\to \mathbb{R}$ and a terminal condition $\Phi\colon H\to \mathbb{R}$, in this section we investigate the following \emph{Kolmogorov backward equation} in integral form:
\begin{align}\label{kolm}
	\notag&u\left(t,\phi\right)=\Phi\left(\phi\right)+\int_{t}^{T}\left\langle\nabla u\left(r,\phi\right),B\left(r,\phi\right)\right\rangle_H \dd r+\frac{1}{2}\int_{t}^{T}\text{Tr}\left(D^2u\left(r,\phi\right)\sigma\left(r\right)\sigma\left(r\right)^\ast\right)\dd r,\\&\qquad t\in\left[0,T\right],\,\phi\in\Lambda.
\end{align}
Our aim is to find a solution of \eqref{kolm} via the random variables $w^{t,\phi}_T\in \mathcal{H}$ satisfying \eqref{eq_B1} for every $t\in [0,T]$ and $\phi\in H$.
This is done in Theorem \ref{Kolm_thm}, for which we need  a couple of  preparatory results. 
\begin{lemma}\label{lemma_time_stint}
	There exists a constant $C_{\alpha,d}>0$ such that
	\begin{equation}\label{tr0}
		\norm{\int_{s}^{t}\left(\sigma\left(t\right)-\sigma\left(r\right)\right)\dd W_r}_\mathcal{H}\le C_{\alpha,d}\left|t-s\right|^\alpha,\quad 0\le s \le t \le T.
	\end{equation}
\end{lemma}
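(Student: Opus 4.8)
The plan is to reduce everything to the Itô isometry in $\mathcal{H}=\mathcal{H}^2$. Since $r\mapsto\sigma(t)-\sigma(r)$ is a \emph{deterministic} operator-valued map and $\int_s^t\norm{\sigma(t)-\sigma(r)}_{\text{HS}}^2\dd r<\infty$ (each term being controlled by $d\norm{k_2}_2^2$, as already noted after \eqref{kernel}), the integral is well defined and the isometry behind \eqref{iso}, specialised to $q=2$, gives the exact identity
\[
\norm{\int_{s}^{t}\left(\sigma\left(t\right)-\sigma\left(r\right)\right)\dd W_r}_{\mathcal{H}}^2=\int_s^t\norm{\sigma(t)-\sigma(r)}_{\text{HS}}^2\dd r.
\]
Thus the whole statement reduces to a deterministic estimate: bounding $\norm{\sigma(t)-\sigma(r)}_{\text{HS}}^2$ for $r\le t$ and integrating in $r$. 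Using the canonical basis $(e_i)_{i=1}^d$ of $\mathbb{R}^d$ and \eqref{kernel}, one has $[(\sigma(t)-\sigma(r))e_i](\xi)=(k_2(\xi-t)1_{\{t<\xi\}}-k_2(\xi-r)1_{\{r<\xi\}})e_i$, whence
\[
\norm{\sigma(t)-\sigma(r)}_{\text{HS}}^2=d\int_0^T\left|k_2(\xi-t)1_{\{t<\xi\}}-k_2(\xi-r)1_{\{r<\xi\}}\right|^2\dd\xi .
\]

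Next I would split $[0,T]$ according to the two indicators (the integrand vanishes on $(0,r]$ since $r\le t$). On $(r,t]$ only the $\sigma(r)$ term survives and, since $\alpha>\frac12$,
\[
\int_r^t k_2(\xi-r)^2\dd\xi=\frac{1}{\Gamma(\alpha)^2}\int_0^{t-r}u^{2\alpha-2}\dd u=\frac{(t-r)^{2\alpha-1}}{\Gamma(\alpha)^2(2\alpha-1)} .
\]
On $(t,T]$ both kernels are present; setting $\delta=t-r$ and substituting $v=\xi-t$ reduces this contribution to $\Gamma(\alpha)^{-2}\int_0^{T-t}\left|v^{\alpha-1}-(v+\delta)^{\alpha-1}\right|^2\dd v$, which I bound by extending the range to $(0,\infty)$ and rescaling $v=\delta w$:
\[
\int_0^{\infty}\left|v^{\alpha-1}-(v+\delta)^{\alpha-1}\right|^2\dd v=c_\alpha\,\delta^{2\alpha-1},\qquad c_\alpha=\int_0^{\infty}\left|w^{\alpha-1}-(w+1)^{\alpha-1}\right|^2\dd w .
\]
Combining the two pieces yields $\norm{\sigma(t)-\sigma(r)}_{\text{HS}}^2\le c_{\alpha,d}\,(t-r)^{2\alpha-1}$, and integrating, $\int_s^t(t-r)^{2\alpha-1}\dd r=(t-s)^{2\alpha}/(2\alpha)$, so taking square roots gives $C_{\alpha,d}|t-s|^\alpha$ as claimed.

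The hard part is the finiteness of the dimensionless constant $c_\alpha$, and this is exactly where the admissible range of $\alpha$ is pinned down. Near $w=0$ the term $w^{\alpha-1}$ is square-integrable precisely when $2(\alpha-1)>-1$, i.e. $\alpha>\frac12$; near $w=\infty$ the mean value theorem gives $\left|w^{\alpha-1}-(w+1)^{\alpha-1}\right|\le|\alpha-1|\,w^{\alpha-2}$, whose square $\sim w^{2\alpha-4}$ is integrable at infinity iff $2\alpha-4<-1$, i.e. $\alpha<\frac32$. Hence $c_\alpha<\infty$ for every $\alpha\in(\frac12,\frac32)$, consistent with the range allowed in Remark~\ref{vabene}; note that at $\alpha=1$ the difference $w^{\alpha-1}-(w+1)^{\alpha-1}$ vanishes identically and $c_\alpha=0$. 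I would remark that this gain from the exponent $\frac12$ of the raw Burkholder bound \eqref{iso} to the exponent $\alpha$ genuinely exploits the cancellation in $\sigma(t)-\sigma(r)$: decomposing instead as $\sigma(t)(W_t-W_s)-\Sigma_{s,t}$ would leave two terms each only of order $(t-s)^{1/2}$, which is insufficient since $\alpha>\frac12$.
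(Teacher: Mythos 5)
Your proposal is correct and follows essentially the same route as the paper: the It\^o isometry reduces the claim to estimating $\norm{\sigma(t)-\sigma(r)}_{\text{HS}}^2$, which both you and the paper split into the contribution on $(r,t]$ (computed exactly as $(t-r)^{2\alpha-1}/(\Gamma(\alpha)^2(2\alpha-1))$) and the contribution on $(t,T]$ (bounded by extending to $(0,\infty)$ and rescaling to get $c_\alpha (t-r)^{2\alpha-1}$), then integrates in $r$ and takes square roots, yielding the same constant $C_{\alpha,d}$. The only difference is presentational: the paper delegates the isometry-plus-integration step and the finiteness of $c_\alpha=\int_0^\infty |w^{\alpha-1}-(w+1)^{\alpha-1}|^2\,\dd w$ to the discussion on page 98 of \cite{DP}, whereas you verify both explicitly, including the check that $c_\alpha<\infty$ exactly for $\alpha\in(\tfrac12,\tfrac32)$.
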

\begin{proof}
	Fix $0\le s \le t \le T$ and denote by $(e_k)_{k=1,\dots,d}$ the canonical basis of $\mathbb{R}^d$.  Using straightforward substitutions, by \eqref{kernel} we compute, for every $k=1,\dots, d,$
	\begin{multline}\label{ancora}
		\norm{\left(\sigma\left(t\right)-\sigma\left(r\right)\right)e_k}^2_2=\int_{0}^{T}\left|k_2\left(\xi-t\right)1_{\{\xi>t\}}-k_2\left(\xi-r\right)1_{\{\xi>r\}}\right|^2\dd \xi
		\\
		=\int_{0}^{t-r}\left|k_2\left(\xi\right)\right|^2\dd \xi+ \int_{0}^{T-t}\left|k_2\left(\xi+t-r\right)-k_2\left(\xi\right)\right|^2\dd \xi,\quad r\in\left[s,t\right].
	\end{multline}
Recalling  that (see \eqref{k_f}) $k_2\left(u\right)=\frac{1}{\Gamma(\alpha)}u^{\alpha-1},\,\alpha\in(1/2,1),\,u>0,$ for every $r\in\left[s,t\right]$ we have
\[
	\int_{0}^{t-r}\left|k_2\left(\xi\right)\right|^2\dd \xi=\frac{1}{(\Gamma(\alpha))^2(2\alpha-1)}\left|t-r\right|^{2\alpha-1},
\] 
and \[
	\int_{0}^{T-t}\left|k_2\left(\xi+t-r\right)-k_2\left(\xi\right)\right|^2\dd \xi\le\frac{1}{(\Gamma(\alpha))^2} \left(\int_{0}^{\infty}\left(\left(\xi+1\right)^{\alpha-1}-\xi^{\alpha-1}\right)^2\dd \xi\right)
	\left|t-r\right|^{2\alpha-1}.
\]
Therefore the discussion at the end of Page 98 in \cite{DP} ensures that \eqref{tr0} holds with $$
C_{\alpha,d}=\frac{\sqrt{d}}{\Gamma\left(\alpha\right)}\left(\frac{1}{2\alpha}\right)^{\frac{1}{2}}\left(\frac{1}{2\alpha-1}+\int_{0}^{\infty}\left(\left(\xi+1\right)^{\alpha-1}-\xi^{\alpha-1}\right)^2\dd \xi\right)^{\frac{1}{2}},$$ completing the proof.
\end{proof}
The following lemma analyzes some properties of the solution $w^{s,\phi}_t\in \mathcal{L}^p_t$ of \eqref{eq_B1} in the framework of Remark \ref{rem_p}. Recall that $\mathcal{L}_t^p= L_t^p\left(\Omega;L^p\right), \text{ where } L^p=L^p\big(0,T;\mathbb{R}^d\big)$, and that $L^p_{\ts}=L^p\big(\left(0,T\right)\times \left(0,T\right);\mathbb{R}^d\big)$. 
\begin{lemma}\label{p_lemma}
	Suppose that $B\colon\Lambda\to L^p_{\ts}$ satisfies Assumption \ref{B_1} and \eqref{ass_b_1'}, for some $p\in\big[2,\left(1-\alpha\right)^{-1}\big)$. Then there exists a constant $C_{1,p}=C_{1,p}\left(\alpha, d,T\right)>0$ such that
	\begin{equation}\label{bound_p}
		\norm{w^{s,\phi}_t}_{\mathcal{L}^p}\le C_{1,p}\left(1+\norm{\phi}_p\right),\quad 0\le s \le t\le T,\, \phi\in L^p.
	\end{equation}
	Furthermore, for every $\phi\in L^p$, there is  a constant $C_{\phi,p}=C_{\phi,p}(\alpha,d,T)>0$ such that 
	\begin{equation}\label{unpo+}
		\norm{w^{s,\phi}_t-\phi}_{\mathcal{L}^p}\le C_{\phi,p}\sqrt{t-s},\quad 0\le s \le t \le T.
	\end{equation}
\end{lemma}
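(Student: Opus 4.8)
The plan is to read off both estimates from the fixed–point identity \eqref{eq_B1}, combining the $\mathcal{L}^p$–bound \eqref{PZbanach} for the stochastic convolution with a spatial reduction of the drift integral to the $L^p_{\ts}$–norm of $\widebar B$. The reduction I would use is, for a fixed (possibly random) $w\in\mathcal{L}^p$ and working $\omega$–wise, that $\int_s^t\widebar B(r,w)\,\dd r$ is the Bochner integral in $L^p\big(0,T;\mathbb{R}^d\big)$ of $r\mapsto\widebar B(r,w)$, so that
\[
\norm{\int_s^t \widebar B(r,w)\,\dd r}_p \le \int_s^t \norm{\widebar B(r,w)}_p\,\dd r \le (t-s)^{1-1/p}\,\norm{\widebar B(w)}_{p,\ts},
\]
the last step being Hölder's inequality on $(s,t)$. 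Taking $L^p(\Omega)$–norms and invoking the growth bound in \eqref{ass_b_1'} for $\widebar B$ then gives the basic drift estimate
\[
\norm{\int_s^t \widebar B(r,w)\,\dd r}_{\mathcal{L}^p} \le (t-s)^{1-1/p}\,C_{0,p}\bigl(1+\norm{w}_{\mathcal{L}^p}\bigr).
\]

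To prove \eqref{bound_p} I would apply this with $w=w^{s,\phi}_t$ together with \eqref{PZbanach} to obtain
\[
\norm{w^{s,\phi}_t}_{\mathcal{L}^p} \le \norm{\phi}_p + (t-s)^{1-1/p}C_{0,p}\bigl(1+\norm{w^{s,\phi}_t}_{\mathcal{L}^p}\bigr) + C_{d,p}\norm{k_2}_p\sqrt{t-s}.
\]
The difficulty is that $\norm{w^{s,\phi}_t}_{\mathcal{L}^p}$ appears on both sides and cannot be absorbed unless $t-s$ is small, which is precisely the price of the frozen drift structure of \eqref{eq_B1}. I would therefore fix $N=N(d,p,T)$ with $C_{0,p}(T/N)^{1-1/p}<1$, take an equispaced partition $\{t_k\}_{k=0}^N$ of $[s,t]$, absorb the self–referential term on each subinterval to get a local bound $\norm{w^{t_k,\psi}_{t_{k+1}}}_{\mathcal{L}^p}\le \kappa\norm{\psi}_{\mathcal{L}^p}+\kappa c$ with $\kappa=\bigl(1-C_{0,p}(T/N)^{1-1/p}\bigr)^{-1}$ and $c$ depending only on $\alpha,d,p,T$, and then iterate through the cocycle property \eqref{cocycle}. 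The cocycle is legitimate in the present framework because $L^p(0,T)\hookrightarrow L^2(0,T)$ for $p\ge2$, so $\mathcal{L}^p\hookrightarrow\mathcal{H}$ and the $\mathcal{L}^p$–solution of Remark~\ref{rem_p} coincides, by uniqueness in $\mathcal{H}$, with the $\mathcal{H}$–solution of Theorem~\ref{well_pos} that satisfies \eqref{cocycle}. The resulting geometric iteration over the $N$ fixed steps yields \eqref{bound_p} with $C_{1,p}$ of the form $\kappa^N$ up to an additive constant, exactly as in the proof of Corollary~\ref{lip}.

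For \eqref{unpo+}, once \eqref{bound_p} is available no further partitioning is needed: from \eqref{eq_B1} the increment $w^{s,\phi}_t-\phi$ equals $\int_s^t\widebar B(r,w^{s,\phi}_t)\,\dd r+\Sigma_{s,t}$, so the drift estimate above and \eqref{PZbanach} give
\[
\norm{w^{s,\phi}_t-\phi}_{\mathcal{L}^p} \le (t-s)^{1-1/p}C_{0,p}\bigl(1+\norm{w^{s,\phi}_t}_{\mathcal{L}^p}\bigr) + C_{d,p}\norm{k_2}_p\sqrt{t-s}.
\]
Since $p\ge2$ forces $1-1/p\ge1/2$, I would bound $(t-s)^{1-1/p}\le T^{1/2-1/p}\sqrt{t-s}$ and insert \eqref{bound_p} to replace $\norm{w^{s,\phi}_t}_{\mathcal{L}^p}$ by $C_{1,p}(1+\norm{\phi}_p)$; collecting the common $\sqrt{t-s}$ factor produces \eqref{unpo+} with a constant $C_{\phi,p}$ depending on $\norm{\phi}_p$ and on $\alpha,d,p,T$, as required.

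The only genuinely delicate point is the first estimate: because the drift in \eqref{eq_B1} is evaluated at the terminal value $w^{s,\phi}_t$ for every $r\in(s,t)$, the naive a priori inequality is self–referential and a direct Gr\"onwall argument is unavailable. Handling this through the local contraction plus the cocycle iteration is the crux of the argument; with \eqref{bound_p} secured, \eqref{unpo+} follows in a single step, and the condition $p\ge2$ is exactly what converts the drift's $(t-s)^{1-1/p}$ scaling into the claimed $\sqrt{t-s}$ rate.
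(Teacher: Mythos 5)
Your proposal is correct and follows essentially the same route as the paper's proof: H\"older/Bochner reduction of the frozen drift integral to the $L^p_{\ts}$--norm with factor $(t-s)^{1-1/p}$, the bound \eqref{PZbanach} for the stochastic convolution, absorption of the self--referential term on subintervals of mesh $T/N$ chosen so that $C_{0,p}(T/N)^{1-1/p}$ is small, iteration via the cocycle property \eqref{cocycle}, and then \eqref{unpo+} in a single step from \eqref{bound_p} using $p\ge 2$ to convert $(t-s)^{1-1/p}$ into $\sqrt{t-s}$. Your explicit justification that the $\mathcal{L}^p$--solution inherits the cocycle property (via $\mathcal{L}^p\hookrightarrow\mathcal{H}$ and uniqueness in $\mathcal{H}$) is a point the paper leaves implicit, but otherwise the arguments coincide.
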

\noindent When $p=2$, the hypotheses of Lemma \ref{p_lemma} reduce to Assumption \ref{B_1} and $\norm{\cdot}_{\mathcal{L}^p}=\norm{\cdot}_{\mathcal{H}}$.
\begin{proof}
	Fix $0\le s \le t\le T$ and $\phi\in L^p$. Recall that, under the hypotheses of the lemma, the unique solution  $w^{s,\phi}_t\in\mathcal{H}$ of \eqref{eq_B1} belongs to the space $\mathcal{L}^p_t$, see Remark \ref{rem_p}. \\
	Consider $N=N(d,p,T)\in \mathbb{N}$ so big that $C_{0,p}(2T/N)^{1-\frac{1}{p}}<1$, where $C_{0,p}$ is the constant appearing in \eqref{ass_b_1'}. Take an equispaced partition $\{t_k\}_{k=0}^N$ of $\left[s,t\right]$ with  $t_0=s$ and $t_N=t$: its mesh $\Delta\le T/N$. By \eqref{eq_B1}-\eqref{ass_b_1'} we have, using Bochner's theorem and Jensen's inequality, 
	\[
	\norm{w^{s,\phi}_{t_1}}_{\mathcal{L}^p}\le \norm{\phi}_p+C_{0,p}{\left(2\Delta\right)}^{1-\frac{1}{p}}\left(1+\norm{w^{s,\phi}_{t_1}}_{\mathcal{L}^p}\right)+\norm{\int_{s}^{t_1}\sigma\left(r\right)\dd W_r}_{\mathcal{L}^p},
	\]
	which in turn implies, by \eqref{PZbanach}, for some constant $c=c(d,p,T)>0$,
	\[
		\norm{w^{s,\phi}_{t_1}}_{\mathcal{L}^p}\le\left(1-C_{0,p}\left(2T/N\right)^{1-\frac{1}{p}}\right)^{-1}\left(\norm{\phi}_p+c\norm{k_2}_p+C_{0,p}\left(2T/N\right)^{1-\frac{1}{p}}\right).
	\]
	At this point, invoking  $N-$times the cocycle property in \eqref{cocycle} we obtain \eqref{bound_p}.
	
	As for \eqref{unpo+}, using \eqref{PZbanach}-\eqref{ass_b_1'} we compute, for some constant $C=C(d,p)>0,$ recalling the notation $\Sigma_{s,t}$ introduced in \eqref{serve_rem},
	\begin{align*}
		\norm{w^{s,\phi}_t-\phi}_{\mathcal{L}^p}
		&\le
		\mathbb{E}\left[\left( \int_{s}^{t}\norm{\widebar{B}\left(r,w^{s,\phi}_t\right)}_p\dd r\right)^p\right]^{\frac{1}{p}}+\norm{\Sigma_{s,t}}_{\mathcal{L}^p}\\&\le
		\left(t-s\right)^{1-\frac{1}{p}}\mathbb{E}\left[\int_{s}^t\dd r\int_{0}^{T}\left|\widebar{B}\left(w^{s,\phi}_t\right)\right|^p\left(r,\xi\right)\dd \xi\right]^{\frac{1}{p}}+C\norm{k_2}_p\sqrt{t-s}\\&
		\le 
		\sqrt{t-s}\left(C\norm{k_2}_p+2^{1-\frac{1}{p}}T^{\frac12-\frac1p}C_{0,p}\left(1+\norm{w^{s,\phi}_t}_{\mathcal{L}^p}\right)\right).
	\end{align*}
	Thus, by \eqref{bound_p} the proof is complete.
\end{proof}

We are now ready to prove the main result of the paper, which shows the connection between the solution $w^{t,\phi}_T,\,t\in[0,T],\,\phi\in H,$ of \eqref{eq_B1} and the backward Kolmogorov equation in integral form \eqref{kolm}.
\begin{theorem}\label{Kolm_thm}
	Suppose that $B\colon\Lambda \to L^p_{\ts}$ satisfies Assumption \ref{B_3} and \eqref{ass_b_1'}, for some $p\in\big(2,\left(1-\alpha\right)^{-1}\big)$. In addition, let the function $r\mapsto B(r,\phi)$ belong to $C\big([0,T];H\big)$, for every $\phi\in \Lambda$.  Fix $\Phi\in C^{2+\beta}_b\left(H\right)$ and define the map $u\colon \left[0,T\right]\times H\to\mathbb{R}$ by 
	\begin{equation}\label{sol_kol}
		u\left(t,\phi\right)=\mathbb{E}\left[\Phi\left(w^{t,\phi}_T\right)\right],\quad t\in\left[0,T\right],\,\phi\in H,
	\end{equation}
	where $w^{t,\phi}_T\in \mathcal{H}$ is the unique solution of \eqref{eq_B1}. Then $u\in L^\infty\big(0,T; C_b^{2+\beta}\left(H\right)\big)\cap C([0,T]\times H;\mathbb{R})$ and solves the Kolmogorov backward equation in integral form \eqref{kolm}. 
\end{theorem}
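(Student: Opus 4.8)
The plan is to follow the probabilistic scheme of \cite[Chapter 9]{DP}: first establish the spatial regularity of $u$ and its joint continuity, then derive \eqref{kolm} from the Markov property \eqref{markoviala} by an It\^o/differentiation argument along the flow $r\mapsto w^{t,\phi}_r$.

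\emph{Regularity and continuity.} I would differentiate \eqref{sol_kol} under the expectation. Since $\Phi\in C^{2+\beta}_b(H)$ and, by Theorems \ref{diff_w_thm}--\ref{diff_w2_thm}, $w^{t,\cdot}_T\in C^{2+\beta}(H;\mathcal H)$ with the bounds \eqref{alpha-h}--\eqref{alpha-h2} uniform in $t$, dominated convergence yields $u(t,\cdot)\in C^{2+\beta}_b(H)$ with $\langle\nabla u(t,\phi),\psi\rangle_H=\mathbb E[\langle\nabla\Phi(w^{t,\phi}_T),Dw^{t,\phi}_T\psi\rangle_H]$ and the corresponding second-order formula, the norms being bounded uniformly in $t$; this gives $u\in L^\infty(0,T;C^{2+\beta}_b(H))$. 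For continuity in time I would fix $\phi$, use the cocycle property \eqref{cocycle} to write $w^{t_0,\phi}_T=w^{t,w^{t_0,\phi}_t}_T$ for $t>t_0$, and estimate $\norm{w^{t,\phi}_T-w^{t_0,\phi}_T}_{\mathcal H}\le C_1\norm{\phi-w^{t_0,\phi}_t}_{\mathcal H}$ by Corollary \ref{lip}, the right-hand side tending to $0$ by \eqref{unpo+}; the derivatives are treated the same way through \eqref{alpha-h}--\eqref{alpha-h2}. Dominated convergence then transfers this to $u$, giving joint continuity on $[0,T]\times H$.

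\emph{The identity.} Combining \eqref{markoviala} with \eqref{cocycle} gives $u(t,\phi)=\mathbb E[u(t+h,w^{t,\phi}_{t+h})]$. By the Volterra property (Remark \ref{classical}) the family $(w^{t,\phi}_r)_{r\ge t}$ solves the genuine $H$-valued It\^o equation $\dd w^{t,\phi}_r=\widebar B(r,w^{t,\phi}_r)\,\dd r+\sigma(r)\,\dd W_r$, so I may apply the Hilbert-space It\^o formula to $u(t+h,\cdot)\in C^2_b(H)$ with its first argument \emph{frozen} (no time derivative is needed) and take expectations, obtaining
\[
\frac{u(t,\phi)-u(t+h,\phi)}{h}=\frac1h\int_t^{t+h}\mathbb E\Big[\langle\nabla u(t+h,w^{t,\phi}_r),\widebar B(r,w^{t,\phi}_r)\rangle_H+\tfrac12\mathrm{Tr}\big(D^2u(t+h,w^{t,\phi}_r)\sigma(r)\sigma(r)^\ast\big)\Big]\,\dd r .
\]
Letting $h\downarrow0$, the trace term converges to $\tfrac12\mathrm{Tr}(D^2u(t,\phi)\sigma(t)\sigma(t)^\ast)$ by joint continuity of $D^2u$, the Hilbert--Schmidt continuity of $\sigma$ (cf.\ \eqref{ancora}) and the finite rank of $\sigma\sigma^\ast$. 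Identifying the limit with $-\partial^+_t u(t,\phi)=\langle\nabla u(t,\phi),B(t,\phi)\rangle_H+\tfrac12\mathrm{Tr}(D^2u(t,\phi)\sigma(t)\sigma(t)^\ast)$ and using that the right-hand side is continuous in $t$, I conclude $u(\cdot,\phi)\in C^1$ and, integrating back from $u(T,\phi)=\Phi(\phi)$, the integral equation \eqref{kolm}.

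\emph{Main obstacle.} The crux is the passage to the limit of the drift term, i.e.\ showing $\tfrac1h\int_t^{t+h}\mathbb E[\widebar B(r,w^{t,\phi}_r)]\,\dd r\to B(t,\phi)$ in $H$. This is genuinely delicate because $\widebar B$ is Lipschitz only into $H_{\ts}$ and \emph{not} slice-by-slice in $H$, so the naive comparison with $\widebar B(r,\phi)$ leaves, after division by $h$, an error that the bound \eqref{bound_B} alone cannot close. The hypothesis $r\mapsto B(r,\phi)\in C([0,T];H)$ is exactly what makes it work: it gives $\tfrac1h\int_t^{t+h}\widebar B(r,\phi)\,\dd r\to B(t,\phi)$ and, more importantly, upgrades the \emph{mean} displacement of the flow to $\norm{\mathbb E[w^{t,\phi}_r-\phi]}_H=O(r-t)$. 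I would then Taylor-expand $\widebar B$ around $\phi$ (using \eqref{save} and the $C^{1+\beta}$ bounds \eqref{meglio}, \eqref{unavolta}), so that the first-order contribution is governed by this $O(r-t)$ mean displacement while the remainder is controlled by the higher-moment version of \eqref{unpo+}, valid in $\mathcal H^q$ for every $q\ge2$ through \eqref{iso} and Corollary \ref{lip}. Carrying this out rigorously still requires controlling the drift along the diagonal $r\mapsto\widebar B(r,w^{t,\phi}_r)$, for which the Volterra/causal structure of $\widebar B$ (Assumption \ref{B_1}, Remark \ref{classical}) and the $L^p$-estimates of Lemma \ref{p_lemma} in the framework of Remark \ref{rem_p} are used; this slice-level bookkeeping is the main technical difficulty, in contrast with the classical Lipschitz setting of \cite{DP}, where the drift term is immediate.
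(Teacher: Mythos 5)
Your first part (spatial regularity, uniform bounds, joint continuity) coincides in both content and method with the paper's Lemma \ref{Reg_sol} in Appendix \ref{appendix1}, so no issue there. The second part, however, diverges from the paper and contains a genuine gap at precisely the step you call the crux. A preliminary point: invoking the Hilbert--space It\^{o} formula requires $r\mapsto w^{t,\phi}_r$ to be (a version of) a continuous $H$-valued semimartingale with progressively measurable drift, whereas Theorem \ref{well_pos} constructs $w^{t,\phi}_r$ only as a random variable for each fixed $r$ via a fixed point; such a version can indeed be built from the Volterra property and Remark \ref{classical}, but you assert rather than prove it. The decisive problem is the drift limit. It\^{o}'s formula produces the pairing $\langle\nabla u(t+h,w^{t,\phi}_r),\widebar B(r,w^{t,\phi}_r)\rangle_H$ with the gradient evaluated \emph{along the flow}; after subtracting the frozen-weight part there remains the cross term $\langle\nabla u(t+h,w^{t,\phi}_r)-\nabla u(t+h,\phi),\widebar B(r,w^{t,\phi}_r)\rangle_H$. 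Your mean-displacement observation ($\norm{\mathbb E[w^{t,\phi}_r-\phi]}_2=O(r-t)$, which is correct) cannot touch this term, because it is only usable after pulling a \emph{deterministic} functional out of the expectation, which is impossible when the weight is random. With the $L^2$ hypotheses \eqref{bound_B} and \eqref{unpo+} alone, the best available estimate is
\begin{equation*}
\frac1h\int_t^{t+h}\left(\mathbb E\left[\norm{w^{t,\phi}_r-\phi}_2^2\right]\right)^{\frac12}\left(\mathbb E\left[\norm{\widebar B\left(r,w^{t,\phi}_{t+h}\right)}_2^2\right]\right)^{\frac12}\dd r=O\left(h^{-1}\cdot\sqrt h\cdot\sqrt h\right)=O(1),
\end{equation*}
which does not vanish as $h\downarrow0$; so your scheme, as written, does not yield \eqref{kolm}.

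What actually closes this term --- and the frozen-weight term as well, with no need of the mean-displacement/Taylor detour --- is the H\"older-in-time gain coming from the $L^p_{\ts}$ hypothesis \eqref{ass_b_1'} with $p>2$: on a time slice of length $h$ one has $\norm{\cdot}_{L^2}\le (hT)^{\frac12-\frac1p}\norm{\cdot}_{L^p}$, which upgrades every $O(\sqrt h)$ drift bound to $O(h^{1-\frac1p})$ and makes the error terms $O(h^{\frac32-\frac1p})=o(h)$. This is exactly how the paper's proof is organized, and it explains why the paper avoids It\^{o}'s formula altogether: it applies the Markov property \eqref{markoviala}, Taylor-expands $u(t_k,\cdot)$ around the \emph{deterministic} point $\phi$ --- so that all weights $\nabla u(t_k,\phi)$, $D^2u(t_k,\phi)$ are deterministic and the trace arises from $\mathbb E[\langle D^2u(t_k,\phi)\Sigma_{t_{k-1},t_k},\Sigma_{t_{k-1},t_k}\rangle_H]$ with no quadratic-variation argument --- then telescopes over a partition of $[s,T]$ and closes each term with per-interval estimates that are $o(\Delta_N)$: $O(\Delta_N^{3/2-1/p})$ for the drift errors via \eqref{ass_b_1'} and Lemma \ref{p_lemma}, $O(\Delta_N^{1/2+\alpha})$ for replacing $\sigma(r)$ by $\sigma(t_k)$ via Lemma \ref{lemma_time_stint}, and $O(\Delta_N^{1+\tilde\beta/2})$ for the Taylor remainder via \eqref{unif_time} and Lemma \ref{p_lemma}. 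Your proposal relegates these $L^p$ estimates to ``slice-level bookkeeping'', but they are the entire content of the passage to the limit; your route could likely be completed, but only by running exactly these estimates on each of your terms, including the random-weight cross terms that the paper's deterministic-weight expansion never generates.
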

\begin{proof}
	The fact that the function $u$ defined in \eqref{sol_kol} belongs to $L^\infty\big(0,T; C_b^{2+\beta}\left(H\right)\big)\cap C([0,T]\times H;\mathbb{R})$ is one  of the results contained in Lemma~\ref{Reg_sol} (see Appendix \ref{appendix1}). Consequently, here we only focus on proving that $u$ solves \eqref{kolm}.
	
	Fix $0\le s <t\le T$ and $\phi \in \Lambda$. Since $\Lambda\subset \mathcal{H}^q_s,\,q\ge2, $ we can use \eqref{markoviala} in Corollary \ref{lip} to write
	\begin{equation}\label{mark}
		u\left(s,\phi\right)=\mathbb{E}\left[\mathbb{E}\left[\Phi\left(w^{s,\phi}_T\right)\Big| \mathcal{F}_t\right]\right]
		=
		\mathbb{E}\left[\restr{\mathbb{E}\left[\Phi\left(w_T^{t,\psi}\right)\right]}{\psi=w^{s,\phi}_t}\right]
		=\mathbb{E}\left[u\left(t,w^{s,\phi}_t\right)\right].
	\end{equation}
	Taylor's formula applied to the mapping $u\left(t,\cdot\right)\in C^{2+\beta}_b\left(H\right)$ yields, denoting by $h=w^{s,\phi}_t-\phi\in \mathcal{H}$,
	\begin{align}\label{expre_rem}
		\notag&u\left(t,w^{s,\phi}_t\right)-u\left(t,\phi\right)=
		\left\langle\nabla u\left(t,\phi\right),h\right\rangle_H
		+\frac{1}{2}
		\left\langle D^2u\left(t,\phi\right)h,h\right\rangle_H+r_{u\left(t,\cdot\right)}\left(\phi,w^{s,\phi}_t\right),\quad \text{ where }
		\\ &\qquad 
		r_{u\left(t,\cdot\right)}\left(x,y\right)=	\int_{0}^{1}\left(1-r\right)\left\langle \left(D^2u\left(t,x+r\left(y-x\right)\right)-D^2u\left(t,x\right)\right)\left(y-x\right), y-x\right\rangle_H\dd r,\quad x,\,y\in H.
	\end{align}
To keep the notation simple, in this proof we denote by $\widebar{B}_{s,t}(w^{s,\phi}_t)=\int_{s}^{t}\widebar{B}(r,w^{s,\phi}_t)\,\dd r\in\mathcal{H}$. Using the expression in  \eqref{eq_B1} for $h=w^{s,\phi}_t-\phi$ and noticing that $\mathbb{E}[\Sigma_{s,t}]=0\in H$ by \cite[Proposition 4.28]{DP}, we take expectations in the previous chain of equalities to obtain, from \eqref{mark},
\begin{align}\label{ste1}
\!\!\!	\notag u\left(s,\phi\right)\!-\!u\left(t,\phi\right)=&\left\langle \!\nabla u\left(t,\phi\right), \mathbb{E}\left[\int_{s}^{t}\widebar{B}\left(r,w^{s,\phi}_t\right)\dd r\right]\right\rangle_H
	\\& +\!\frac{1}{2}\mathbb{E}\left[\left\langle \!D^2u\left(t,\phi\right) \left(\!\widebar{B}_{s,t}\left(w^{s,\phi}_t\right)\!+\Sigma_{s,t}\right)\!
	,\!\widebar{B}_{s,t}\left(w^{s,\phi}_t\right)+\Sigma_{s,t}
	\right\rangle_H\right]\!\!
+\mathbb{E}\left[r_{u\left(t,\cdot\right)}\!\left(\phi,w^{s,\phi}_t\right)\right].
\end{align}

For all $N\in \mathbb{N}$, consider an equispaced partition $\{t^{(N)}_k\}_{k=0}^{N}$ of $[s,T]$ with mesh $\Delta_N$, where $t^{(N)}_0=s$ and $t^{(N)}_N=T$. By \eqref{ste1}, we have
\begin{align}\label{ste2}
	u&\left(s,\phi\right)-\Phi\left(\phi\right)=\sum_{k=1}^{N}\left(u\left(t_{k-1}^{(N)},\phi\right)-u\left(t_{k}^{(N)},\phi\right)\right)\notag\\
	&\notag=\sum_{k=1}^{N}
	\left\langle \nabla u\left(t_{k}^{(N)},\phi\right), \mathbb{E}\left[\widebar{B}_{t_{k-1}^{(N)},t_{k}^{(N)}}\left(w^{t_{k-1}^{(N)},\phi}_{t_{k}^{(N)}}\right)\right]\right\rangle_H
	\\
	&\notag\quad +\frac{1}{2}\sum_{k=1}^{N}
	\mathbb{E}\left[\left\langle D^2u\left(t_{k}^{(N)},\phi\right) \left(\widebar{B}_{t_{k-1}^{(N)},t_{k}^{(N)}}\left(w^{t_{k-1}^{(N)},\phi}_{t_{k}^{(N)}}\right)+\Sigma_{t_{k-1}^{(N)},t_{k}^{(N)}}\right)
	,\widebar{B}_{t_{k-1}^{(N)},t_{k}^{(N)}}\left(w^{t_{k-1}^{(N)},\phi}_{t_{k}^{(N)}}\right)+\Sigma_{t_{k-1}^{(N)},t_{k}^{(N)}}
	\right\rangle_H\right]
	\\
	&\quad+\sum_{k=1}^{N}\mathbb{E}\left[r_{u\left(t_k^{(N)},\cdot\right)}\left(\phi,w^{t_{k-1}^{(N)},\phi}_{t_k^{(N)}}\right)\right] \eqqcolon\mathbf{\upperRomannumeral{1}}^N+\mathbf{\upperRomannumeral{2}}^N+\mathbf{\upperRomannumeral{3}}^N.
\end{align}
In the sequel, we omit the superscript $N$ from the points of the partition to ease notation, i.e., we write $t_k$ for $t_k^{(N)}$. Firstly, we analyze $\mathbf{\upperRomannumeral{1}}^N$, which we decompose using the properties of the Bochner's integral as follows:
\begin{align*}
\notag	\mathbf{\upperRomannumeral{1}}^N&=
	\sum_{k=1}^{N}\left\langle
	\nabla u\left(t_{k},\phi\right), B\left(t_{k},\phi\right)
	 \right\rangle_H\left(t_k-t_{k-1}\right)
	\\&\quad+\sum_{k=1}^{N}\mathbb{E}\left[
	 \int_{t_{k-1}}^{t_{k}}\left\langle \nabla u\left(t_{k},\phi\right),\widebar{B}\left(r,w^{t_{k-1},\phi}_{t_{k}}\right)-B\left(r,\phi\right)\right\rangle_H\dd r\right]\notag\\&
	 \quad+\sum_{k=1}^{N}
	 \int_{t_{k-1}}^{t_{k}}\left\langle \nabla u\left(t_{k},\phi\right),B\left(r,\phi\right)-B\left(t_k,\phi\right)\right\rangle_H\dd r
	 \eqqcolon
	\mathbf{\upperRomannumeral{1}}^N_1+\mathbf{\upperRomannumeral{1}}^N_2+\mathbf{\upperRomannumeral{1}}^N_3.
\end{align*} 
Note that $\mathbf{\upperRomannumeral{1}}_1^N\to \int_{s}^{T}\langle\nabla u (r,\phi), B(r,\phi)\rangle_H\dd r$ as $N\to \infty$ by Lemma \ref{Reg_sol} in Appendix \ref{appendix1}. Next, Jensen's inequality, \eqref{ass_b_1'}, \eqref{unpo+}  and the continuous immersion $L^p\big({\left(t_{k-1},t_k\right)\times \left(0,T\right);\mathbb{R}^d}\big)\hookrightarrow L^2\big({\left(t_{k-1},t_k\right)\times \left(0,T\right);\mathbb{R}^d}\big)$ yield, for some constant $C_{\phi,p}=C_{\phi,p}(\alpha,d,T)>0$,
\begin{align*}
	\left|\mathbf{\upperRomannumeral{1}}^N_2\right|&\le
	\norm{\nabla u}_{\infty}\sqrt{\Delta_N}\sum_{k=1}^{N}\mathbb{E}\left[\left(\int_{t_{k-1}}^{t_k}\dd r \int_{0}^{T}\left|\widebar{B}\left(w^{t_{k-1},\phi}_{t_{k}}\right)-B\left(\phi\right)\right|^2(r,\xi)\,\dd \xi\right)^{\frac{1}{2}} 
\right]
	 \\
	 &
	 \le 
	 T^{\frac{1}{2}-\frac{1}{p}}\left({\Delta_N}\right)^{1-\frac{1}{p}}	\norm{\nabla u}_{\infty}
	 \sum_{k=1}^{N}\mathbb{E}\left[\norm{\widebar{B}\left(w^{t_{k-1},\phi}_{t_{k}}\right)-B\left(\phi\right)}_{p,\ts}\right]
	 \\&
	 \le T^{\frac{1}{2}-\frac{1}{p}}C_{0,p}\left({\Delta_N}\right)^{1-\frac{1}{p}}\norm{\nabla u}_{\infty}\sum_{k=1}^{N}\mathbb{E}\left[\norm{w^{t_{k-1},\phi}_{t_{k}}-\phi}_p\right]
	\le
	T^{\frac{3}{2}-\frac{1}{p}}C_{0,p}C_{\phi,p}\norm{\nabla u}_{\infty}
	\left({\Delta_N}\right)^{\frac{1}{2}-\frac{1}{p}}\underset{N\to \infty}{\longrightarrow}0.
\end{align*}
Here, we set $\norm{\nabla u}_\infty=\sup_{t\in [0,T]}\sup_{\phi\in H}\norm{\nabla u (t,\phi)}_2$.
Regarding $\mathbf{\upperRomannumeral{1}}^N_3$, we define the modulus of continuity of the map $B(\cdot,\phi)\colon [0,T]\to H$ by
\[
\mathfrak{w}\left(B(\cdot,\phi),\delta\right)=\sup_{\left|u-v\right|\le \delta}. \norm{B\left(u,\phi\right)-B\left(v,\phi\right)}_2,\quad \delta>0.
\]
Since, by hypothesis,  $B(\cdot,\phi)$ is continuous on the compact $[0,T]$, it is also uniformly continuous, hence we infer that 
$
\left|\mathbf{\upperRomannumeral{1}}_3^N\right|
\le 
T \norm{\nabla u}_\infty \mathfrak{w}\left(B\left(\cdot, \phi\right),\Delta_N\right)
\underset{N\to \infty}{\longrightarrow}0.
$
Therefore, we have just shown that 
\begin{equation}\label{1}
	\lim_{N\to \infty}\mathbf{\upperRomannumeral{1}}^N=\int_{s}^{T}\left\langle\nabla u (r,\phi), B(r,\phi)\right\rangle_H\dd r.
\end{equation}

Now we investigate $\mathbf{\upperRomannumeral{2}}^N,$ which we split as follows:
\begin{align*}
	2\mathbf{\upperRomannumeral{2}}^N=&
	\sum_{k=1}^{N}
	\mathbb{E}\left[\left\langle D^2u\left(t_{k},\phi\right) \widebar{B}_{t_{k-1},t_{k}}\left(w^{t_{k-1},\phi}_{t_{k}}\right)
	,\widebar{B}_{t_{k-1},t_{k}}\left(w^{t_{k-1},\phi}_{t_{k}}\right)
	\right\rangle_H\right]
	\\&+\sum_{k=1}^{N}
	\mathbb{E}\left[\left\langle D^2u\left(t_{k},\phi\right) \widebar{B}_{t_{k-1},t_{k}}\left(w^{t_{k-1},\phi}_{t_{k}}\right)
	,\Sigma_{t_{k-1},t_{k}}
	\right\rangle_H\right]\\
	&+\sum_{k=1}^{N}
	\mathbb{E}\left[\left\langle D^2u\left(t_{k},\phi\right) 
	\Sigma_{t_{k-1},t_{k}}
	,
	\widebar{B}_{t_{k-1},t_{k}}\left(w^{t_{k-1},\phi}_{t_{k}}\right)
	\right\rangle_H\right]
	\\&+\sum_{k=1}^{N}
	\mathbb{E}\left[\left\langle D^2u\left(t_{k},\phi\right) \Sigma_{t_{k-1},t_{k}}
	,\Sigma_{t_{k-1},t_{k}}
	\right\rangle_H\right]
	\eqqcolon\mathbf{\upperRomannumeral{2}}^N_1+\mathbf{\upperRomannumeral{2}}^N_2+\mathbf{\upperRomannumeral{2}}^N_3+\mathbf{\upperRomannumeral{2}}^N_4.
\end{align*}
Let us  set $\norm{D^2u}_\infty=\sup_{t\in [0,T]}\sup_{\phi\in H}\norm{D^2u(t,\phi)}_{\mathcal{L}(H;H)}$. By \eqref{ass_b_1'}-\eqref{bound_p}, arguing similarly to $\mathbf{\upperRomannumeral{1}}_2^N$ we have, for some $c>0$,
\begin{align*}
	\left|\mathbf{\upperRomannumeral{2}}_1^N\right|&\le \norm{D^2u}_\infty\sum_{k=1}^N\mathbb{E}\left[\norm{\widebar{B}_{t_{k-1},{t_k}}\left(w_{t_k}^{t_{k-1},\phi}\right)}_2^2\right]\\&
	\le T^{1-\frac{2}{p}}\Delta^{1-\frac{2}{p}}_N\norm{D^2u}_\infty\sum_{k=1}^N\mathbb{E}\left[\norm{\widebar{B}\left(w_{t_k}^{t_{k-1},\phi}\right)}^2_{p,\ts}\right]\left(t_k-t_{k-1}\right)
	\le
	c\Delta^{1-\frac{2}{p}}_N\norm{D^2u}_\infty\left(1+\norm{\phi}^2_p
	 \right).
\end{align*}
Moreover, by H\"older's inequality and \eqref{iso}, for some  $\tilde{c}>0$,
\begin{equation*}
	\left|\mathbf{\upperRomannumeral{2}}^N_2\right|
	\le \norm{D^2u}_\infty\sum_{k=1}^N
	\norm{\widebar{B}_{t_{k-1},{t_k}}\left(w_{t_k}^{t_{k-1},\phi}\right)}_{\mathcal{H}}\norm{\Sigma_{t_{k-1},t_{k}}}_{\mathcal{H}}
	 \le 
 \norm{D^2u}_\infty\widetilde{c}\,T^{\frac{3}{2}-\frac{1}{p}}\norm{k_2}_2\Delta_N^{\frac{1}{2}-\frac{1}{p}}\left(1+\norm{\phi}_p\right).
\end{equation*}
  Since the second bound holds for $\mathbf{\upperRomannumeral{2}}^N_3$, too, we see that $\mathbf{\upperRomannumeral{2}}^N_i\to 0$ as $N\to \infty$, $i=1,2,3.$  \\As for $\mathbf{\upperRomannumeral{2}}^N_4$, we write it as the following sum:
\begin{align*}
	\mathbf{\upperRomannumeral{2}}^N_4
	=&
	\sum_{k=1}^N	\mathbb{E}\left[\left\langle D^2u\left(t_{k},\phi\right) \int_{t_{k-1}}^{t_k}\sigma\left(t_k\right)\dd W_r
	,\int_{t_{k-1}}^{t_k}\sigma\left(t_k\right)\dd W_r
	\right\rangle_H\right]\\&+
		\sum_{k=1}^N	\mathbb{E}\left[\left\langle D^2u\left(t_{k},\phi\right) \int_{t_{k-1}}^{t_k}\left(\sigma\left(r\right)-\sigma\left(t_k\right)\right)\dd W_r
	,\int_{t_{k-1}}^{t_k}\sigma\left(t_k\right)\dd W_r
	\right\rangle_H\right]\\&+
		\sum_{k=1}^N	\mathbb{E}\left[\left\langle D^2u\left(t_{k},\phi\right) \int_{t_{k-1}}^{t_k}\sigma\left(t_k\right)\dd W_r
	,\int_{t_{k-1}}^{t_k}\left(\sigma\left(r\right)-\sigma\left(t_k\right)\right)\dd W_r
	\right\rangle_H\right]\\&+	\sum_{k=1}^N	\mathbb{E}\left[\left\langle D^2u\left(t_{k},\phi\right) \int_{t_{k-1}}^{t_k}\left(\sigma\left(r\right)-\sigma\left(t_k\right)\right)\dd W_r
	,\int_{t_{k-1}}^{t_k}\left(\sigma\left(r\right)-\sigma\left(t_k\right)\right)\dd W_r
	\right\rangle_H\right]\\\eqqcolon&
	\mathbf{\upperRomannumeral{2}}^N_{4,1}+\mathbf{\upperRomannumeral{2}}^N_{4,2}+\mathbf{\upperRomannumeral{2}}^N_{4,3}+\mathbf{\upperRomannumeral{2}}^N_{4,4}.
\end{align*}
By \cite[Proposition $4.30$]{DP}, we have, for every $k=1,\dots,N$,
\[
	D^2u\left(t_{k},\phi\right) \int_{t_{k-1}}^{t_k}\sigma\left(t_k\right)\dd W_r=\int_{t_{k-1}}^{t_k}D^2u\left(t_{k},\phi\right)\sigma\left(t_k\right)\dd W_r,\quad \mathbb{P}-\text{a.s.},
\]
whence, by \cite[Corollary $4.29$]{DP} and  Lemma \ref{Reg_sol}, 
\begin{align*}
	\notag&\mathbf{\upperRomannumeral{2}}^N_{4,1}
	=\sum_{k=1}^N\text{Tr}\left( D^2u\left(t_k,\phi\right)\sigma\left(t_k\right)\sigma\left(t_k\right)^\ast\right)\left(t_k-t_{k-1}\right)\underset{N\to\infty}{\longrightarrow}\int_{s}^{T}\text{Tr}\left( D^2u\left(r,\phi\right)\sigma\left(r\right)\sigma\left(r\right)^\ast\right)\dd r.
\end{align*}
Furthermore,  H\"older's inequality, \eqref{tr0} in Lemma \ref{lemma_time_stint} and \cite[Proposition $4.20$]{DP} yield, for $i=2,3$, for some constants $c_1,c_2>0$,
\begin{align*}
	\left|\mathbf{\upperRomannumeral{2}}_{4,i}^N\right| 
	\le c_1\norm{k_2}_2
	\norm{D^2u}_{\infty}\sqrt{\Delta_N}
	\sum_{k=1}^N\norm{\int_{t_{k-1}}^{t_k}\left(\sigma\left(r\right)-\sigma\left(t_k\right)\right) \dd W_r}_\mathcal{H}\le Tc_2\norm{k_2}_2\norm{D^2u}_{\infty}\Delta_N^{\alpha-\frac{1}{2}}\underset{N\to\infty}{\longrightarrow}0.
\end{align*}
Analogous estimates show that $\mathbf{\upperRomannumeral{2}}_{4,4}^N\to 0$ as $N\to \infty$, as well.
Thus,
\begin{equation}\label{2}
	\lim_{N\to \infty}\mathbf{\upperRomannumeral{2}}^N
	=\frac{1}{2}
\int_{s}^{T}\text{Tr}\left( D^2u\left(r,\phi\right)\sigma\left(r\right)\sigma\left(r\right)^\ast\right)\dd r.
\end{equation}

At last we study the remainder term $\mathbf{\upperRomannumeral{3}}^N$ in \eqref{ste2}. To do this, we employ the fact that  $D^2u\left(t,\cdot\right)\colon H\to \mathcal{L}\left(H;H\right)$  is $\beta-$H\"older continuous uniformly in time, see \eqref{unif_time} in Lemma \ref{Reg_sol}. We choose $\tilde{\beta}\in\left(0,\beta\right)$ such that $2+\tilde{\beta}<p$; by the expression of $r_{u(t_k,\cdot)}$ in \eqref{expre_rem} we deduce that
\begin{align}\label{3}
	\notag\left|\mathbf{\upperRomannumeral{3}}^N\right|&\le 
	\sum_{k=1}^N\int_{0}^{1}\mathbb{E}\left[\norm{D^2u\left(t_k,\phi+r\left(w_{t_k}^{t_{k-1},\phi}-\phi\right)-D^2u\left(t_k,\phi\right)\right)}_{\mathcal{L}\left(H;H\right)}
	\norm{w_{t_k}^{t_{k-1},\phi}-\phi}_2^2\right] \dd r
	\\&\notag\le C
	\sum_{k=1}^N\mathbb{E}\left[\norm{w_{t_k}^{t_{k-1},\phi}-\phi}_2^{2+{\tilde{\beta}}}\right]
	\le
	CT^{\left(\frac{1}{2}-\frac{1}{p}\right)\left(2+\tilde{\beta}\right)}\sum_{k=1}^N\mathbb{E}\left[\norm{w_{t_k}^{t_{k-1},\phi}-\phi}_p^{2+{\tilde{\beta}}}\right]
	\\&\le  C\sum_{k=1}^{N}\left(t_k-t_{k-1}\right)^{1+\frac{\tilde{\beta}}{2}}
\underset{N\to \infty}{\longrightarrow}0,
\end{align}
where in the last passage we use  Lemma \ref{p_lemma} and Jensen's inequality. Here $C_{}>0$ is a constant allowed to change from line to line. Combining \eqref{1}, \eqref{2}, \eqref{3} in \eqref{ste2}, we obtain
\[
	u\left(s,\phi\right)-\Phi\left(\phi\right)
	=
	\int_{s}^{T}\left\langle\nabla u \left(r,\phi\right), B\left(r,\phi\right)\right\rangle_H \dd r
	+
	\frac{1}{2}
	\int_{s}^{T}\text{Tr}\left( D^2u\left(r,\phi\right)\sigma\left(r\right)\sigma\left(r\right)^\ast\right)\dd r,
\]
i.e., \eqref{kolm}. Thus, the proof is complete.
\end{proof}
\begin{rem}\label{rem_diff}
	Under the hypotheses of Theorem \ref{Kolm_thm}, for every $\phi\in\Lambda$ the function $u\left(\cdot, \phi\right)\colon[0,T]\to \mathbb{R}$ defined in \eqref{sol_kol} is absolutely continuous on $[0,T]$, because the integrands on the right--hand side of \eqref{kolm} are bounded on $[0,T]$. Thus, the fundamental theorem of calculus shows that $u\colon [0,T]\times H \to \mathbb{R}$ satisfies the following \emph{Kolmogorov backward equation} in differential form:
	\begin{equation*}\label{kolm_diff}
		\begin{cases}
			\partial_tu\left(t,\phi\right)+\left\langle\nabla u\left(t,\phi\right),B\left(t,\phi\right)\right\rangle_H+\frac{1}{2}\emph{Tr}\left(D^2u\left(t,\phi\right)\sigma\left(t\right)\sigma\left(t\right)^\ast\right) =0,&\text{for a.e. }t\in\left(0,T\right),\,\phi\in\Lambda,\\
			u\left(T,\phi\right)=\Phi\left(\phi\right),\quad \phi\in H.
		\end{cases}
	\end{equation*}
\end{rem}
\begin{rem}\label{vabene}
All the arguments and computations leading to Theorem \ref{Kolm_thm} continue to hold when the power $\alpha$ of the kernel $k_2$ in \eqref{k_f} varies in $[1,\frac{3}{2})$, i.e., $k_2$ is the continuous kernel in $\mathbb{R}_+$ given by
\[
		k_2(t)=\frac{1}{\Gamma(\alpha)}t^{\alpha-1},\quad t\ge0,\text{ for some }\alpha\in\left[1,\frac{3}{2}\right).
\]
We have however decided to present the theory in the case $\alpha\in (\frac12,1)$ to emphasize the fact that our approach is able to handle rough kernels with explosions at $t=0$.
\end{rem}
\begin{ex}\label{ex1}
	Given two continuous maps $A\colon[0,T]\to\mathbb{R}^{d\times d}$ and $b\colon[0,T]\to \mathbb{R}^d$,  define  $B\colon \Lambda\to H_{\ts}$ by 	(cfr.~\eqref{B})
	\begin{equation}\label{continuare}
		B(w)\colon [0,T]\times [0,T] \to \mathbb{R}^d \quad \text{ such that }\quad  B(w)(t,\xi)=1_{\{\xi>t\}}k_2(\xi-t)\,\left(A(t)w(t)+b(t)\right),\,t,\xi\in[0,T], 
	\end{equation}
	$\text{for every }w\in\Lambda.$
We now show that $B$ satisfies all the hypotheses of Theorem \ref{Kolm_thm}. 

For every $t\in (0,T]$ and $r\in (0,t)$, from the definition in \eqref{continuare} it is immediate to see that $B(w)(r,\xi)=0, \,\xi\in(0,r)$, and that $B(w)(r,\cdot)$ depends on $w$ only via $\restr{w}{(0,t)}.$ Denote by $\norm{A}_\infty=\sup_{t\in[0,T]}|A(t)|$ and by $\norm{b}_{\infty}=\sup_{t\in[0,T]}|b(t)|$, where $|A(t)|$ is the operator norm in $\mathbb{R}^{d\times d}.$ Computing,  for every $w_1,\,w_2\in \Lambda$,
	\begin{multline*}
	\norm{B(w_1)}^2_{\ts}\le \int_{0}^T\left(\int_{0}^T\left|k_2(\xi-t)\right|^21_{\{\xi>t\}}\left(\norm{b}_\infty+\norm{A}_\infty\left|w_1(t)\right|\right)^2\dd\xi\right)\dd t
	\\\le 2T\max\left\{\norm{b}^2_\infty, \norm{A}^2_\infty\right\}\norm{k_2}_2^2(1+\norm{w_1}^2_2),
	\end{multline*}
	and
	\[
	\norm{B(w_2)-B(w_1)}^2_{\ts}\le \norm{A}^2_\infty\int_{0}^T\left(\int_{0}^T\left|k_2(\xi-t)\right|^21_{\{\xi>t\}}\left|w_2(t)-w_1(t)\right|^2\dd\xi\right)\dd t\le \norm{A}^2_\infty\norm{k_2}_2^2\norm{w_2-w_1}^2_2,
	\]
	we deduce that Assumption \ref{B_1} is satisfied. Since the previous computations can be repeated for every $p\in (2,(1-\alpha)^{-1})$, then  condition \eqref{ass_b_1'} in Remark \ref{rem_p} is verified, as well. 
	\\
	As for Assumption \ref{B_2}, evidently  the operator $DB(w_1)\in \mathcal{L}\big(\Lambda_2;H_{\ts}\big)$ defined by 
	\begin{equation}\label{cont1}
		[DB(w_1)(w_2)](t,\xi)=1_{\{\xi>t\}}k_2(\xi-t)A(t)w_2(t),\quad t,\xi\in[0,T],\,w_2\in \Lambda_2,
	\end{equation}
	is the $\Lambda_2-$Fréchet differential of $B$ in $w_1$, for any $w_1 \in\Lambda$. Indeed,
	\begin{equation*}
		{B(w_1+h)-B(w_1)-DB(w_1)(h)}=0,\quad w_1, h\in\Lambda. 
	\end{equation*}
	Moreover, from \eqref{cont1} we have, for every $w_1,w_2\in\Lambda$,
	\[
	\norm{DB(w_1)(w_2)}_{\ts}\le \norm{A}_{\infty}\norm{k_2}_2\norm{w_2}_2,\qquad 
		\norm{DB(w_1)-DB(w_2)}_{\mathcal{L}(\Lambda_2;H_{\ts})}=0,
	\]
	which in particular gives \eqref{ho_alpha} with $\gamma=1$. \\
	The requirements of Assumption \ref{B_3} are trivially satisfied (with $\beta=1$) because, given the affine structure of this example, $D^2B(w_1)=0\in\mathcal{L}(\Lambda_2,\Lambda_2;H_{\ts}),\,w_1\in\Lambda.$
\\
	In conclusion,  for every $w\in \Lambda$, the map $t\mapsto B(t,w)=B(w)(t,\cdot)$ is continuous from $[0,T]$ to $H$. Indeed, denoting by $\tilde{b}(t)$ the $\mathbb{R}^d-$valued continuous function $A(t)w(t)+b(t)$, by \eqref{ancora} and the two following equations we have, for any $r,t\in [0,T]$,
	\begin{align*}
	&	\norm{B(t,w)-B(r,w)}^2_2=\int_{0}^{T}
	\left|k_2\left(\xi-t\right)1_{\{\xi>t\}}\tilde{b}(t)-k_2\left(\xi-r\right)1_{\{\xi>r\}}\tilde{b}(r)\right|^2\dd \xi
\\&
\le 2\big|\!\big|{\tilde{b}}\big|\!\big|^2_\infty\!\int_{0}^{T}\!	\left|k_2\left(\xi-t\right)\!1_{\{\xi>t\}}\!-\!k_2\left(\xi-r\right)\!1_{\{\xi>r\}}\right|^2\dd \xi\!+\!2\norm{k_2}_2^2\left|\tilde{b}(t)-\tilde{b}(r)\right|^2\!\!\le\! L\!\left(\!\left|t-r\right|^{2\alpha-1}\!\!+\!\left|\tilde{b}(t)-\tilde{b}(r)\right|^2\right),
	\end{align*}
	for some constant $L>0$.
\end{ex}
	\section{The mild Kolmogorov equation}\label{mde}
A classical approach to the study of the Kolmogorov equation is its mild formulation, see for example \cite[Section 6.5]{DP2} and \cite[Section 9.5]{DP}. Contrary to the strategy adopted in the previous section, where we have constructed a solution to \eqref{kolm} via a stochastic equation (cfr. Theorem \ref{Kolm_thm}), for the  mild Kolmogorov equation we look for a \emph{direct} solution. With the term \emph{direct}, we mean a solution which is determined by a fixed point argument, hence which does not rely on the underlying stochastic PDE. 

In this section, we first present a formal reasoning leading to the mild form of \eqref{kolm}, see \eqref{mild}. After that, in Subsection \ref{grad_est} we explain some difficulties in proving the well–posedness of such a mild formulation, which are essentially due to the structure of the noise.  Since it not the purpose of this section to present a general theory with abstract hypotheses, we limit ourselves to observe that the mild Kolmogorov equation cannot be solved for a class of interesting drifts $b$  using common techniques (cfr. Lemma \ref{neg}). Finally, in Subsection \ref{reg_noise}, we highlight the theoretical importance of the mild Kolmogorov equation. In particular, we sketch a procedure –relying on the mild form– typically used to prove uniqueness in law for a stochastic PDE under weak regularity requirements on the coefficients. 
We only mention that studying the relation between the transition semigroup of an SDE and the corresponding  mild Kolmogorov equation can also be used  for numerical applications, as recently investigated by \cite{FLR} in the Brownian case and \cite{BO} in the case of isotropic, stable Lévy processes. 

Let 	$\mathcal{C}=C_{b}\left(  H;\mathbb{R}\right)$  and consider the backward Kolmogorov equation in differential form, formally written as
\begin{equation}\label{strong}
	\begin{cases}
	\partial_{s}v\left(  s,x\right)  +\left\langle b\left(  s,x\right)  ,\nabla v\left(
	s,x\right)  \right\rangle_H +\frac{1}{2}\text{Tr}\left(
	D^{2}v\left(  s,x\right)  \sigma\left(  s\right)\sigma\left(  s\right)^\star  \right) =0,\qquad s\in\left[  0,T\right),\,x\in H,\\
	v\left(
	T,x\right)  =\phi\left(  x\right),\quad \phi\in\mathcal{C}.
\end{cases}
\end{equation}
Here, $H$ and $\sigma$ are those of the previous sections (see, in particular, \eqref{kernel}), whereas the drift $b\colon [0,T]\times H\to H$ is a bounded measurable map which could be non--smooth. \\
We reformulate \eqref{strong} in order to study it in the space $\mathcal{C} $. Let $u\left(  t,x\right)  \coloneqq v\left(  T-t,x\right)  $: $u$ solves the forward equation
\begin{equation}\label{differ eq}
	\begin{cases}
		\partial_{t}u\left(  t,x\right)  =\mathcal{A}_{T-t} u\left(
		t,x\right)  +\left\langle b\left(  T-t,x\right)  , \nabla u\left(  t,x\right)
		\right\rangle_H,\qquad  t\in\left(0,T\right],\,x\in H,\\
		u\left(  0,x\right)
		=\phi\left(  x\right),\qquad \phi\in\mathcal{C},
	\end{cases}
\end{equation}
where we set 
\[
\mathcal{A}_{T-t} f\left(  x\right)  = \frac{1}{2}\text{Tr}\left(
D^{2}f\left(  x\right)  \sigma\left(
T-t\right) \sigma\left(  T-t\right) ^\ast \right).
\]
Fix $s\in\left[  0,T\right]$. For every $t\in [s,T]$, we define the linear evolution operator $R_T\left(  t,s\right)\colon \mathcal{C}\to \mathcal{C}$ by 
\[
\left(  R_T\left(  t,s\right)  \phi\right)  \left(  x\right)  = 
\mathbb{E}
\left[  \phi\left(  x+\int_{s}^{t}\sigma\left(  T-r\right)  \dd W_{r}\right)
\right],\quad x\in H,\,\phi\in\mathcal{C},
\]
where $W$ is an $\mathbb{R}^d-$valued, standard Brownian motion as the one introduced in Section \ref{section abstract}. Consider the auxiliary equation
\begin{equation}\label{aux_eq}
	\begin{cases}
		\partial_{t}z\left(  t,x\right)   =\mathcal{A}_{T-t}  z\left(
		t,x\right),  \qquad t\in\left(s,T\right],\,x\in H, \\
		z\left(  s,x\right)    =\phi\left(  x\right),\quad \phi\in\mathcal{C};
	\end{cases}
\end{equation}
if $\phi\in{C}_b^{2+\beta}(H)$, then Theorem \ref{Kolm_thm} and Remark \ref{rem_diff} imply that the function $(R_T(t,s)\phi)(x)$ solves this Cauchy problem  for almost every $t\in(s,T)$, for every $x\in \Lambda$.
At this point, we can introduce the mild formulation of the Kolmogorov equation \eqref{kolm}:
\begin{equation}\label{mild}
	u\left(  t,x\right)  =\left(  R_{T}\left(  t,0\right)  \phi\right)  \left(
	x\right)  +\int_{0}^{t}\left(  R_{T}\left(  t,s\right)  \left\langle b\left(
	T-s,\cdot\right)  , \nabla u\left(  s,\cdot\right)  \right\rangle_H \right)  \left(
	x\right)  \dd s,\quad \phi\in\mathcal{C}.
\end{equation}

Note that, heuristically speaking, \eqref{mild} corresponds to the Kolmogorov equation \eqref{differ eq}. Indeed, if $u\left(
t,x\right)  $ solves \eqref{mild}, then a formal application of Leibnitz integral rule and \eqref{aux_eq} yield
\begin{align*}
	\partial_{t}u (t,\cdot) & =\partial_{t}R_{T}\left(  t,0\right)  \phi+R_{T}\left(
	t,t\right)  \left\langle b\left(  T-t,\cdot\right)  ,\nabla u\left(  t,\cdot\right)
	\right\rangle_H+\int_{0}^{t}\partial_{t}R_{T}\left(  t,s\right)  \left\langle
	b\left(  T-s,\cdot\right)  , \nabla u\left(  s,\cdot\right)  \right\rangle_H \dd s\\
	& =\mathcal{A}_{T-t}  R_{T}\left(  t,0\right)  \phi+\left\langle
	b\left(  T-t,\cdot\right)  ,\nabla u\left(  t,\cdot\right)  \right\rangle_H+\int%
	_{0}^{t}\mathcal{A}_{T-t} R_{T}\left(  t,s\right)  \left\langle
	b\left(  T-s,\cdot\right)  ,\nabla u\left(  s,\cdot\right)  \right\rangle_H \dd s\\
	& =\mathcal{A}_{T-t}  u+\left\langle b\left(  T-t,\cdot\right)
	,\nabla u\left(  t,\cdot\right)  \right\rangle_H.
\end{align*}

As we have already mentioned, the aim is to prove directly, i.e., by a fixed point argument not relying on a stochastic equation, that \eqref{mild} admits a solution of class, e.g., $C\left(\left[  0,T\right]  ;\mathcal{C}\right)  $.  In this regards,  the regularity properties of the evolution operator $R_{T}\left(  t,s\right)  $ are paramount, hence we now discuss them. \\
According to \cite[Proposition 4.28]{DP}, the $H-$valued random variable $\int_{s}
^{t}\sigma\left(  T-r\right)  \dd W_{r}$ is Gaussian, centered, with covariance
operator%
\begin{align}\label{esagera}
	Q_{T}\left(  t,s\right) =\int_{s}^{t}\sigma\left(  T-r\right)
	\sigma\left(  T-r\right)^\ast  \dd r
	=\int_{T-t}^{T-s}\sigma\left(  \tau\right)  \sigma\left(
	\tau\right)^\ast  \dd \tau.
\end{align}
This covariance operator is not trivial as it would be in the case of constant $\sigma$. In fact, in such a case it would be easy to see that $R_T\left(t,s\right)  \phi,\,\phi\in\mathcal{C},$ is differentiable in the direction $\sigma$ (and only in
this direction). In our framework with a  time–varying $\sigma$, the question of the directions of differentiability of $R_T\left(  t,s\right)  \phi,\,\phi\in\mathcal{C},$ is much more complex. Nevertheless, it has to be addressed, because the directional differentiability of $R_T\left(  t,s\right)\phi $ is essential to solve	directly \eqref{mild}. This may be seen in various ways, one of which is the change of variable
\[
\theta_{T}\left(  t,x\right)  =\left\langle b\left(  T-t,x\right)  , \nabla u\left(
t,x\right)  \right\rangle_H,
\]
that leads to the study of the equation
\begin{multline}\label{integral theta}
	\theta_{T}\left(  t,x\right)  =\left\langle b\left(  T-t,x\right)  , \nabla \left(
	R_{T}\left(  t,0\right)  \phi\right)  \left(  x\right)  \right\rangle_H
	\\+\int_{0}^{t}\left\langle b\left(  T-t,x\right)  ,\nabla\left(  R_{T}\left(
	t,s\right)  \theta_{T}\left(  s,\cdot\right)  \right)  \left(  x\right)
	\right\rangle_H\dd s,\quad \phi\in\mathcal{C}.
\end{multline}
If we can prove that, for some $C,\epsilon>0$,
\begin{equation}\label{gradient estimate}
	\sup_{x\in H}\left\vert \left\langle b\left(  T-t,x\right)  , \nabla \left(
	R_{T}\left(  t,s\right)  \psi\right)  \left(  x\right)  \right\rangle_H
	\right\vert \leq\frac{C}{\left\vert t-s\right\vert ^{1-\epsilon}}\left\Vert
	\psi\right\Vert _{\infty},\quad 0\leq s< t\leq T,\,\psi\in \mathcal{C},
\end{equation}
then we may try to set up a fixed point
argument for the $\theta_{T}-$equation \eqref{integral theta} in a suitable space of bounded, 
measurable functions. This would in turn give a solution for equation \eqref{mild}
by simply setting%
\[
u\left(  t,x\right)  =\left(  R_{T}\left(  t,0\right)  \phi\right)  \left(
x\right)  +\int_{0}^{t}\left( R_{T}\left(  t,s\right)  \theta_{T}\left(
s,\cdot\right)  \right)  \left(  x\right)  \dd s.
\]

\subsection{The gradient estimate}\label{grad_est}

Using the Gaussian structure of the $H-$valued random variable 
\[
Z_{T}\left(  t,s\right)  = \int_{s}^{t}\sigma\left(  T-r\right)  \dd W_{r},\quad 0\le s < t \le T,
\]
and denoting by $Q_T(t,s)^{-1}$ the pseudo–inverse of $Q_T(t,s)$, one can prove –via the Cameron Martin formula (see, e.g., \cite[Theorem $2.23$]{DP})– that 
\[
\left\langle b\left(  T-t,x\right)  ,\nabla\left(  R_{T}\left(  t,s\right)
\psi\right)  \left(  x\right)  \right\rangle_H
=\mathbb{E}\left[  \left\langle
Q_{T}\left(  t,s\right)  ^{-1}b\left(  T-t,x\right)  ,Z_{T}\left(  t,s\right)
\right\rangle_H \psi\left(  x+Z_{T}\left(  t,s\right)  \right)  \right],\quad \psi\in \mathcal{C},
\]
if
\[
b\left(  T-t,x\right)  \in \text{Range}\left(  Q_{T}\left(  t,s\right)  \right)  .
\]
This is not the most general condition to obtain the existence of such directional derivative. Indeed, we could split $Q_{T}\left(
t,s\right)  ^{-1}$ and use the fact that $Q_{T}\left(  t,s\right)
^{-1/2}Z_{T}\left(  t,s\right)  $ has good properties, which reduces the problem to
investigating $b\left(  T-t,x\right)\in\text{Range }\big(Q_{T}\left(  t,s\right) ^{1/2}\big)  $. However, 
handling the square root is even more difficult and thus, for the time being,
we analyze the more restrictive condition.

When the previous holds, arguing as in \eqref{iso}, for some $c>0$ we have
\begin{align*}
	\sup_{x\in H}\left\vert \left\langle b\left(  T-t,x\right)  ,\nabla \left(
	R_{T}\left(  t,s\right)  \psi\right)  \left(  x\right)  \right\rangle_H
	\right\vert  & \leq\left\Vert \psi\right\Vert _{\infty}\sup_{x\in H}%
	\mathbb{E}\left[  \left\vert \left\langle Q_{T}\left(  t,s\right)
	^{-1}b\left(  T-t,x\right)  ,Z_{T}\left(  t,s\right)  \right\rangle_H
	\right\vert \right] \\
	& \leq c
	\left\Vert \psi\right\Vert _{\infty}\norm{k_2}_2\left(  t-s\right)  ^{1/2}\sup_{x\in
		H}\left\Vert Q_{T}\left(  t,s\right)  ^{-1}b\left(  T-t,x\right)  \right\Vert
	_{2}.
\end{align*}
Therefore a sufficient condition for the gradient estimate
\eqref{gradient estimate} is%
\[
\sup_{x\in H}\left\Vert Q_{T}\left(  t,s\right)  ^{-1}b\left(  T-t,x\right)
\right\Vert _{2}\leq\frac{C}{\left\vert t-s\right\vert ^{\frac{3}{2}-\epsilon
}},\quad 0\le s <t\le T,\text{ for some }C>0.
\]

For a general $b$, standing the potentially very strong degeneracy of
$Q_{T}\left(  t,s\right)  $, we do not see any hope to prove the gradient
estimate \eqref{gradient estimate}. A particular case that, a priori, may look
promising, is when the Volterra drift is  of the same kind as the noise part,
namely (cfr. \eqref{kernel})
\begin{align*}
[	b\left(  t,x\right) ](\xi) = \bar{\beta}\left(  x\right)k_2\left(\xi-  t\right)1_{\{t<\xi\}}
=
[\sigma(t)\bar{\beta}(x)]\left(\xi\right)
,\quad\xi\in[0,T], \text{ for some }
	\bar{\beta} \in \mathcal{B}_b\left(H;\mathbb{R}^d\right).
\end{align*}
In this case, since $b\left(  T-t,x\right)  =
 \sigma\left(  T-t\right) \bar{\beta}\left(  x\right)$, we need to prove that
\begin{equation}
	\sigma\left(  T-t\right) e_k \in \text{Range}\left(  Q_{T}\left(  t,s\right)  \right),\quad k=1,\dots, d,
	\label{sigma property}%
\end{equation}
and that
\[
\left\Vert Q_{T}\left(  t,s\right)  ^{-1}\sigma\left(  T-t\right) e_k \right\Vert
_{2}
\leq\frac{C}{\left\vert t-s\right\vert ^{\frac{3}{2}-\epsilon}},\quad 0\le s <t\le T,\,k=1,\dots,d, \text{ for some }C>0,
\]
where  $(e_k)_{k=1,\dots,d}$ is the  canonical basis of $\mathbb{R}^d$. 
Recalling that, by \eqref{esagera}, $Q_{T}\left(  t,s\right)  =\int_{T-t}^{T-s}\sigma\left(
\tau\right)\sigma\left(  \tau\right)^\ast  \dd\tau$, apparently we could
think that \eqref{sigma property} is true. But it is not, as the necessary condition given by the next lemma shows. 

\begin{lemma}\label{neg}
	Let $0\le s < t \le T$ and suppose that $f\in \emph{Range }(Q_T(t,s))\subset H$. Then $f=g$ almost everywhere in $(0,T)$, where $g\colon(0,T)\to \mathbb{R}^d$ is a continuous function such that $g=0$ in $(0,T-t)$.
\end{lemma}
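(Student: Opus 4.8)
The plan is to make the range of $Q_T(t,s)$ explicit and to recognize every range element as a one–sided convolution against the kernel $k_2$, whose smoothing effect yields both the continuity and the support property. First I would compute the adjoint: for $h\in H$ and $x\in\mathbb{R}^d$, formula \eqref{kernel} gives $\langle\sigma(\tau)x,h\rangle_H=x\cdot\int_\tau^T k_2(\xi-\tau)h(\xi)\,\dd\xi$, so that $\sigma(\tau)^\ast h=\int_\tau^T k_2(\xi-\tau)h(\xi)\,\dd\xi\in\mathbb{R}^d$; call this $c(\tau)$. The map $\tau\mapsto c(\tau)$ is measurable, and by Cauchy–Schwarz together with $k_2\in L^2(0,T)$ (which holds precisely because $\alpha>\tfrac12$) it is bounded, with $|c(\tau)|\le\norm{k_2}_2\norm{h}_2$.

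Next, applying $\sigma(\tau)$ once more and integrating, for $f=Q_T(t,s)h$ the representation \eqref{esagera} and the standard properties of the Bochner integral (as already used for \eqref{st_integral}) give, for a.e. $\eta\in(0,T)$,
\[
f(\eta)=\int_{T-t}^{T-s}k_2(\eta-\tau)\,1_{\{\tau<\eta\}}\,c(\tau)\,\dd\tau.
\]
Setting $\tilde{c}=c\,1_{(T-t,\,T-s)}$, which is bounded with compact support and hence lies in $L^2(\mathbb{R};\mathbb{R}^d)$, and $k_2^T=k_2\,1_{(0,T)}\in L^2(\mathbb{R})$ extended by zero on $(-\infty,0]$, I would observe that for $\tau\in(T-t,T-s)$ and $\eta\in(0,T)$ one has $\eta-\tau<t\le T$, so that $k_2^T(\eta-\tau)=k_2(\eta-\tau)1_{\{\tau<\eta\}}$. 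Consequently the displayed integral rewrites as the convolution $f(\eta)=(k_2^T\ast\tilde{c})(\eta)$ for a.e. $\eta\in(0,T)$.

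I would then set $g=k_2^T\ast\tilde{c}$ and invoke the classical fact that the convolution of two $L^2(\mathbb{R})$ functions is bounded and continuous on all of $\mathbb{R}$: boundedness is Cauchy–Schwarz, while continuity follows from the continuity of translations in $L^2$, since $\sup_{\eta}|g(\eta+\delta)-g(\eta)|\le\norm{k_2^T(\cdot+\delta)-k_2^T}_2\,\norm{\tilde{c}}_2\to0$ as $\delta\to0$. Thus $f=g$ a.e. in $(0,T)$ with $g$ continuous. The support property is then immediate: if $\eta\in(0,T-t)$, then $\eta-\tau<0$ for every $\tau\in(T-t,T-s)$, whence $k_2^T(\eta-\tau)=0$ and $g(\eta)=0$; continuity extends this vanishing to all of $(0,T-t)$.

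The difficulties here are technical rather than conceptual. Justifying the convolution representation requires the Fubini/measurability step identifying the pointwise integral with the representative of the Bochner integral, with integrability secured by the uniform bound on $c$; and the membership $k_2^T\in L^2(\mathbb{R})$ hinges on the standing assumption $\alpha>\tfrac12$, which tames the singularity of $k_2$ at the origin. Once the one–sided convolution structure is correctly bookkept so that $k_2^T\ast\tilde{c}$ faithfully reproduces $Q_T(t,s)h$ on $(0,T)$, the continuity of $g$ is entirely standard, so I expect the main obstacle to be precisely this careful matching of truncations.
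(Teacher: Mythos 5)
Your proof is correct, and it reaches the same intermediate representation as the paper — namely that $f$ restricted to $(0,T)$ coincides a.e. with $\xi\mapsto\int_{T-t}^{T-s}k_2(\xi-\tau)1_{\{\tau<\xi\}}\,\sigma(\tau)^\ast v\,\dd\tau$, obtained by unwinding \eqref{esagera} through the Bochner/Fubini identification — but it then diverges from the paper in how continuity is established, and your route is genuinely different and arguably cleaner. The paper works componentwise, uses that $\tau\mapsto\langle\sigma(\tau)e_k,v\rangle_H$ is \emph{continuous} (via \eqref{ancora}), splits the interval into $[T-t,T-s]$ and $[T-s,T)$, and invokes a Volterra-equations result from Gripenberg--Londen--Staffans on the first piece and dominated convergence on the second. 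You instead discard all regularity of the density $c(\tau)=\sigma(\tau)^\ast v$ beyond boundedness and measurability, truncate both factors so the one-sided integral becomes a bona fide convolution $k_2^T\ast\tilde c$ on $\mathbb{R}$, and apply the classical fact that the convolution of two $L^2(\mathbb{R})$ functions is (uniformly) continuous, via continuity of translations; the support statement then falls out of the supports of $k_2^T$ and $\tilde c$. What your approach buys: it is self-contained (no external Volterra theorem), needs weaker hypotheses on the density, avoids the case-splitting at $\xi=T-s$ (where the paper's dominated-convergence step is slightly delicate because the kernel is still singular at that endpoint), and yields uniform continuity on all of $\mathbb{R}$ rather than just continuity on $(0,T)$. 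What the paper's approach buys: by keeping the continuity of $\langle\sigma(\cdot)e_k,v\rangle_H$ in play it fits the Volterra-kernel framework the authors use elsewhere, and it does not require recognizing and bookkeeping the truncation/extension needed to make the convolution structure exact — which, as you note, is the one place in your argument demanding care (your check that $k_2^T(\eta-\tau)=k_2(\eta-\tau)1_{\{\tau<\eta\}}$ for the relevant $\eta,\tau$ is the correct and necessary verification, and it holds since $\eta-\tau<t\le T$ there). Both proofs rely on $\alpha>\tfrac12$ only through $k_2\in L^2$, so the logical dependencies are identical.
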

\begin{proof}
	Fix $0\le s < t\le T$. Consider $f\in \text{Range }(Q_T(t,s))$, so that there exists $v\in H$ such that, by \eqref{esagera},
$
		f=\int_{T-t}^{T-s}\sigma\left(\tau\right)\sigma\left(\tau\right)^\ast v_{} \dd \tau.
$
In particular, for every  $k=1,\dots,d$, denoting by $\boldsymbol{\cdot}$ the scalar product in $\mathbb{R}^d$, by the standard properties of Bochner's integral we obtain
\[
f\boldsymbol{\cdot} e_k=
\left(\int_{T-t}^{T-s}\left( \sigma\left(\tau\right)^\ast \!v_{}\right) \,\,k_2\left(\cdot-\tau\right)1_{\{\cdot>\tau\}} \dd \tau\right) 
\boldsymbol{\cdot} e_k
=
\int_{T-t}^{T-s}\left\langle \sigma\left(\tau\right)e_k,v_{}\right\rangle_H k_2\left(\cdot-\tau\right)1_{\{\cdot>\tau\}} \dd \tau
.
\]
Furthermore, recalling \eqref{k_f}, $\text{for a.e. }\xi \in (0,T)$ we have
	\begin{equation*}
		\left(f\boldsymbol{\cdot} e_k\right)\left(\xi\right)
		=\frac{1}{\Gamma\left(\alpha\right)}
		\int_{T-t}^{T-s}1_{\left\{\tau<\xi\right\}}
		\left\langle \sigma\left(\tau\right)e_k,v_{}\right\rangle_H
		 \left(\xi-\tau\right)^{\alpha-1}\dd \tau
		. 
	\end{equation*}
	We denote by $g_k$ the function appearing on the right--hand side of the previous equation, i.e.,
	\[
	g_k\left(\xi\right) =\frac{1}{\Gamma\left(\alpha\right)}
	\int_{T-t}^{T-s}1_{\left\{\tau<\xi\right\}}
	\left\langle \sigma\left(\tau\right)e_k,v_{}\right\rangle_H
	\left(\xi-\tau\right)^{\alpha-1}\dd \tau,\quad \xi\in\left(0,T\right).
	\]
	We want to show the continuity of $g_k$ on the interval $[T-t,T)$: this ensures that $g_k$ is continuous on the whole $(0,T)$, since trivially $g_k=0$ on $(0,T-t]$. We first  write
	\begin{equation*}
		g_k\left(\xi\right)
		=
		\int_{0}^{\xi}1_{\left\{\tau>T-t\right\}}
			\left\langle \sigma\left(\tau\right)e_k,v_{}\right\rangle_H
			\left(\xi-\tau\right)^{\alpha-1}
		\dd \tau,\quad\xi \in [T-t,T-s],
	\end{equation*}
 and notice that, as  $\sigma(\cdot) e_k \in C([0,T];H)$ (see \eqref{ancora} in the proof of Lemma \ref{lemma_time_stint}), the mapping $\langle\sigma(\cdot)e_k, v_{}\rangle_H$ is continuous on $ [0,T]$. Therefore we invoke \cite[Theorem $2.2$ (\lowerRomannumeral{1}), Chapter $2$]{GP} to conclude that $g_k$ is continuous on $[T-t,T-s]$.  Secondly, since
	\[
	g_k\left(\xi\right)=\int_{T-t}^{T-s}	\left\langle \sigma\left(\tau\right)e_k,v_{}\right\rangle_H
	 \left(\xi-\tau\right)^{\alpha-1}\dd \tau,\quad \xi \in \left[T-s,T\right),
	\]
	the continuity of $g_k$ on $[T-s,T)$ can be inferred employing the dominated convergence theorem. Thus, $g_k$ is continuous on $(0,T)$. This shows that  the components $f\boldsymbol{\cdot}e_k,\,k=1,\dots, d,$ of the function $f\colon[0,T]\to \mathbb{R}^d$ are almost everywhere equal on $(0,T)$ to continuous functions $g_k$, which completes  the proof.
\end{proof}
\begin{rem}
	Lemma \ref{neg} prevents us from choosing another interesting  drift $b(t,x)$, namely 
	\[
	[b\left(t,x\right)]\left(\xi\right)=\bar{\beta}\left(x\right)1_{\left(t,T\right)}\left(\xi\right),\quad \xi \in\left[0,T\right],\text{ for some }\bar{\beta}\in \mathcal{B}_b(H;\mathbb{R}^d).
	\]
\end{rem} 

\subsection{Concerning regularization by noise via the Kolmogorov equation}\label{reg_noise}
Among the interests of the Kolmogorov equation, there is the theory of regularization by noise: both in finite and infinite dimensions, it has been	shown that a sufficiently regular solution to the Kolmogorov equation allows to prove suitable uniqueness results for the underlying stochastic differential equation (see examples in  \cite{DFPR, DFRV, FlaSF, Stroock, Veret}). In contrast with Sections \ref{sec_abstract}-\ref{sec_KO}, one
deals with a stochastic PDE 
\begin{equation}\label{SPDE}
	\dd X_t=b(t,X_t)\,\dd t+ \sigma\left(t\right)\dd W_t,\quad X_0=x\in H,
\end{equation}
which  –a priori– is not well posed, because $b\colon[0,T]\times H\to H$ is subject to weak regularity assumptions not including Lipschitz continuity. The aim is to prove the uniqueness in law of a mild solution to \eqref{SPDE}. A typical approach to achieve this takes the following steps:

\begin{enumerate}
	\item\label{pr_1} Write the Kolmogorov equation in mild form \eqref{mild} associated with \eqref{SPDE}
	and prove the  existence of solutions by a fixed point argument.
	
	\item \label{pr_2} Possibly after a regularization procedure (see an example in infinite dimensions
	in \cite[Theorem 2.9, Section 2.3.3]{FlaSF}), apply It\^{o} formula to
	$u\left(  T-t,X_{t}\right)  $, where $u$ solves \eqref{mild} and $X_{t}$ is any solution of \eqref{SPDE},  prove
	that the local martingale term is a martingale and obtain an expression for 
	\[
	\mathbb{E}\left[\phi \left(X_{t}\right)\right].
	\]
	In this way, one deduces that two solutions have the same marginals. A control on
	the gradient of $u$, like the one discussed in Subsection \ref{grad_est}, may help in this step to	prove that the local martingale term is a martingale.
	
	\item \label{pr_3}Apply specific arguments (see \cite{Stroock}, \cite{Trevisan}) to obtain uniqueness in law.
\end{enumerate}
Under suitable assumptions on $b$ which guarantee the well–posedness of \eqref{mild} (whence Step \ref{pr_1} follows), the details of Steps \ref{pr_2}-\ref{pr_3} will be the subject of a future research.
\begin{appendices}
\section{Regularity  of the solution (\ref{sol_kol}) of the Kolmogorov equation }\label{appendix1}
In this appendix, we present an auxiliary lemma, namely Lemma \ref{Reg_sol}, containing regularity results about the solution $u\colon [0,T] \times H\to \mathbb{R}$ of the Kolmogorov backward equation \eqref{kolm} defined in \eqref{sol_kol}. Such a lemma plays a key role in the proof of Theorem \ref{Kolm_thm}.
\begin{lemma}\label{Reg_sol}
	Suppose that $\Phi\in C_b^{2+\beta}\left(H\right)$ and that Assumption \ref{B_3} holds. Then, the map $u\colon[0,T]\times H\to \mathbb{R}$ defined in \eqref{sol_kol} belongs to $L^\infty\big(0,T; C_b^{2+\beta}\left(H\right)\big)\cap C([0,T]\times H;\mathbb{R})$. In particular, there exists a constant $C_{d,T,\beta,\Phi}>0$ such that 
	\begin{equation}\label{unif_time}
		\norm{D^2u\left(t,\phi\right)-D^2u\left(t,\psi\right)}_{\mathcal{L}\left(H;H\right)}\le C_{d,T,\beta,\Phi} \norm{\phi-\psi}^\beta_2,\quad \phi,\psi\in H,\,t\in\left[0,T\right].
	\end{equation}

Furthermore, the map $(t,\phi, \psi)\mapsto \langle \nabla u (t,\phi), \psi \rangle_H$ [resp., $(t,\phi, \psi,\eta)\mapsto\langle D^2u(t,\phi)\psi,\eta\rangle_H$] is  continuous in $[0,T]\times H\times H$ [resp., $[0,T]\times H\times H\times H$].
\end{lemma}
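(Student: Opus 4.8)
The plan is to differentiate \eqref{sol_kol} under the expectation, exploiting that the stochastic flow $\phi\mapsto w_T^{t,\phi}$ is $C^{2+\beta}$ from $H$ to $\mathcal{H}$ (Theorems \ref{diff_w_thm} and \ref{diff_w2_thm}) while $\Phi\in C_b^{2+\beta}(H)$. For fixed $t$, abbreviate $F(\phi)=w_T^{t,\phi}$, so that $DF(\phi)=Dw_T^{t,\phi}$ and $D^2F(\phi)=D^2w_T^{t,\phi}$. Writing $u(t,\phi+h)-u(t,\phi)$, applying Taylor's formula to $\Phi$ and inserting $F(\phi+h)-F(\phi)=DF(\phi)h+\mathrm{o}(\norm{h}_2)$ in $\mathcal{H}$, the quadratic Taylor remainder is controlled by $\tfrac12\mathrm{Lip}(\nabla\Phi)\,\mathbb{E}[\norm{F(\phi+h)-F(\phi)}_H^2]$, which is $\mathrm{o}(\norm{h}_2)$ by the Lipschitz bound \eqref{initial_c}; this yields $\langle\nabla u(t,\phi),\psi\rangle_H=\mathbb{E}[\langle\nabla\Phi(F(\phi)),DF(\phi)\psi\rangle_H]$. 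Differentiating once more in the same way, using that $\nabla\Phi$ is Lipschitz and $D^2\Phi$ is $\beta$-Hölder and the pathwise bounds on $DF,D^2F$, I would obtain $\langle D^2u(t,\phi)\psi,\eta\rangle_H=\mathbb{E}[\langle D^2\Phi(F(\phi))DF(\phi)\psi,DF(\phi)\eta\rangle_H+\langle\nabla\Phi(F(\phi)),D^2F(\phi)(\psi,\eta)\rangle_H]$.

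The bounds then follow from the \emph{pathwise} ($\mathbb P$-a.s.) estimates \eqref{alpha-h} and \eqref{alpha-h2}: since $\norm{DF(\phi)\psi}_H\le C_2\norm{\psi}_2$ and $\norm{D^2F(\phi)(\psi,\eta)}_H\le C_3\norm{\psi}_2\norm{\eta}_2$ a.s., the expressions above give $\norm{u}_\infty\le\norm{\Phi}_\infty$, $\norm{\nabla u(t,\phi)}_2\le\norm{\nabla\Phi}_\infty C_2$ and $\norm{D^2u(t,\phi)}_{\mathcal{L}(H;H)}\le\norm{D^2\Phi}_\infty C_2^2+\norm{\nabla\Phi}_\infty C_3$, all uniformly in $(t,\phi)$. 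For \eqref{unif_time} I would telescope $\langle(D^2u(t,\phi)-D^2u(t,\phi'))\psi,\eta\rangle_H$ into terms each isolating one factor of the form $D^2\Phi(F(\phi))-D^2\Phi(F(\phi'))$, $(DF(\phi)-DF(\phi'))\psi$, $\nabla\Phi(F(\phi))-\nabla\Phi(F(\phi'))$ or $(D^2F(\phi)-D^2F(\phi'))(\psi,\eta)$; the Hölder/Lipschitz continuity of $\nabla\Phi,D^2\Phi$ together with the second inequalities in \eqref{alpha-h}--\eqref{alpha-h2} (with $\gamma=\beta$, see \eqref{unavolta}) bound each term a.s. by a constant times $\norm{F(\phi)-F(\phi')}_H^\beta$ or $\norm{F(\phi)-F(\phi')}_H$, times $\norm{\psi}_2\norm{\eta}_2$. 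Taking expectations, Jensen's inequality and \eqref{initial_c} turn these into $\norm{\phi-\phi'}_2^\beta$ and $\norm{\phi-\phi'}_2$. Since $D^2u$ is already bounded uniformly, the first-power terms are absorbed into the $\beta$-Hölder bound by distinguishing $\norm{\phi-\phi'}_2\le1$ (where $\norm{\phi-\phi'}_2\le\norm{\phi-\phi'}_2^\beta$) from $\norm{\phi-\phi'}_2>1$ (where the uniform bound on $D^2u$ already gives the estimate), with all constants independent of $t$; this proves $u\in L^\infty(0,T;C_b^{2+\beta}(H))$ together with \eqref{unif_time}.

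For joint continuity of $u$ I would prove separate continuity with uniform control. In $\phi$, $\left|u(t,\phi)-u(t,\phi')\right|\le\norm{\nabla\Phi}_\infty\norm{F(\phi)-F(\phi')}_{\mathcal{H}}\le\norm{\nabla\Phi}_\infty C_1\norm{\phi-\phi'}_2$, a Lipschitz bound uniform in $t$. In $t$ (fixed $\phi\in H$), the cocycle property \eqref{cocycle} gives $w_T^{t,\phi}=w_T^{t',w_{t'}^{t,\phi}}$ when $t\le t'$ (and symmetrically otherwise), whence \eqref{initial_c} and \eqref{unpo+} with $p=2$ yield $\norm{w_T^{t,\phi}-w_T^{t',\phi}}_{\mathcal{H}}\le C_1\norm{w_{t'}^{t,\phi}-\phi}_{\mathcal{H}}\le C_1C_{\phi}\sqrt{\left|t-t'\right|}$; since $\Phi$ is Lipschitz, $\left|u(t,\phi)-u(t',\phi)\right|\le\norm{\nabla\Phi}_\infty C_1C_{\phi}\sqrt{\left|t-t'\right|}$, uniformly for $\phi$ in bounded sets. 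Combining the two gives $u\in C([0,T]\times H;\mathbb{R})$.

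The continuity of the directional derivatives is then obtained \emph{without differentiating in time}, which I expect to be the main potential obstacle and which the following observation circumvents. By Taylor's formula and the \emph{uniform-in-$t$} bound on $D^2u$, the difference quotient $\varepsilon^{-1}(u(t,\phi+\varepsilon\psi)-u(t,\phi))$ converges to $\langle\nabla u(t,\phi),\psi\rangle_H$ as $\varepsilon\to0$, uniformly for $(t,\phi,\psi)\in[0,T]\times H\times H$ with $\phi,\psi$ in bounded sets; being a uniform limit of functions jointly continuous in $(t,\phi,\psi)$ (by continuity of $u$), the map $(t,\phi,\psi)\mapsto\langle\nabla u(t,\phi),\psi\rangle_H$ is itself jointly continuous. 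Likewise, writing $\varepsilon^{-1}(\langle\nabla u(t,\phi+\varepsilon\eta),\psi\rangle_H-\langle\nabla u(t,\phi),\psi\rangle_H)=\int_0^1\langle D^2u(t,\phi+s\varepsilon\eta)\eta,\psi\rangle_H\,\mathrm{d}s$ and invoking the \emph{uniform-in-$t$} $\beta$-Hölder estimate \eqref{unif_time}, this quotient converges to $\langle D^2u(t,\phi)\eta,\psi\rangle_H$ uniformly on bounded sets; since it is jointly continuous for each fixed $\varepsilon$ (by the continuity just proved for $\nabla u$), the limit $(t,\phi,\psi,\eta)\mapsto\langle D^2u(t,\phi)\psi,\eta\rangle_H$ is jointly continuous, completing the proof.
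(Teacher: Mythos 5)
Your proposal is correct, and for most of the lemma it follows the same route as the paper's proof: you obtain the representations of $\nabla u$ and $D^2u$ by differentiating under the expectation with Taylor's formula for $\Phi$ and $\nabla\Phi$ together with Theorems \ref{diff_w_thm}--\ref{diff_w2_thm}; you derive the uniform bounds and the estimate \eqref{unif_time} from the pathwise inequalities \eqref{alpha-h}--\eqref{alpha-h2}, Corollary \ref{lip} and Jensen's inequality (your explicit case distinction $\norm{\phi-\phi'}_2\le1$ versus $\norm{\phi-\phi'}_2>1$, needed to absorb the Lipschitz-order term coming from $\nabla\Phi\big(w_T^{t,\phi}\big)-\nabla\Phi\big(w_T^{t,\phi'}\big)$ into the $\beta$-H\"older bound, is a point the paper leaves implicit); and you prove $u\in C([0,T]\times H;\mathbb{R})$ exactly as the paper does, via \eqref{initial_c}, the cocycle property \eqref{cocycle} and \eqref{unpo+} with $p=2$. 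The one genuine divergence is the final step, the joint continuity of the first- and second-order directional derivatives. The paper obtains the required uniform-in-$t$ convergence of difference quotients by returning to the stochastic flow: it records in \eqref{sghe} and its second-order analogue that the differentiability estimates \eqref{ma2}--\eqref{ma4} and \eqref{ma3} from the proofs of Theorems \ref{diff_w_thm} and \ref{diff_w2_thm} hold uniformly over $t\in[0,T]$ and over bounded $\phi,\psi$. You instead extract the uniformity from quantities already established inside the lemma itself: Taylor's formula applied to $u(t,\cdot)$ with the uniform bound on $\norm{D^2u}_\infty$ for the gradient step, and the uniform H\"older estimate \eqref{unif_time} for the second-order step; both arguments then conclude identically, since a locally uniform limit of jointly continuous difference quotients is jointly continuous. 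Your variant is somewhat cleaner and self-contained, as it avoids re-inspecting the flow theorems' proofs for uniformity in $t$, at the mild price of making the continuity assertions depend on first having proved \eqref{unif_time}, whereas in the paper these two parts of the lemma remain logically independent.
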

\begin{proof}
We start off by proving that $u\in  C([0,T]\times H;\mathbb{R})$. Consider $t\in[0,T],\,\phi\in H$ and two sequences $(t_n)_n\subset[0,T]$ and $(\phi_n)_n\subset H$ such that $t_n\to t$ and $\phi_n\to \phi $ as $n\to \infty$. Since $\nabla \Phi\colon H\to H$ is bounded, by the mean value theorem we compute, recalling the definition of $u$ in \eqref{sol_kol},
\begin{multline}\label{riprendere}
	\left|u(t_n,\phi_n)-u(t,\phi)\right|\le \mathbb{E}\left[\left|\Phi\left(w_T^{t_n,\phi_n}\right)-\Phi\left(w_T^{t_n,\phi}\right)\right|\right]
	+
	\mathbb{E}\left[\left|\Phi\left(w_T^{t_n,\phi}\right)-\Phi\left(w_T^{t,\phi}\right)\right|\right]
	\\
	\le \norm{\nabla\Phi}_\infty\left(\norm{w_T^{t_n,\phi_n}-w_T^{t_n,\phi}}_{\mathcal{H}}
	+
	\norm{w_T^{t_n,\phi}-w_T^{t,\phi}}_{\mathcal{H}}
	\right).
\end{multline}
By \eqref{initial_c} in Corollary \ref{lip}, we infer that $\lim_{n\to \infty}\big|\!\big|w_T^{t_n,\phi_n}-w_T^{t_n,\phi}\big|\!\big| _{\mathcal{H}}=0$. As for $\big|\!\big|{w_T^{t_n,\phi}-w_T^{t,\phi}}\big|\!\big|_{\mathcal{H}}$, we first assume that $t_n> t$.
Then, by the flow property in \eqref{cocycle} and Corollary \ref{lip} we have, for some constants $c_1,c_2>0$ which might depend on $\phi$,
\begin{equation*}
		\norm{w_T^{t_n,\phi}-w_T^{t,\phi}}_{\mathcal{H}}=
			\norm{w_T^{t_n,\phi}-w_T^{t_n,w_{t_n}^{t,\phi}}}_{\mathcal{H}}\le 
			c_1\norm{w_{t_n}^{t,\phi}-\phi}_{\mathcal{H}}\le c_2\sqrt{\left|t_n-t\right|},
\end{equation*}
where the last inequality is due to Lemma \ref{p_lemma}, see \eqref{unpo+}. An analogous argument shows that the previous bound holds even in   the case $t_n\le t$, therefore $\lim_{n\to \infty}	\big|\!\big|{w_T^{t_n,\phi}-w_T^{t,\phi}}\big|\!\big|_{\mathcal{H}}=0$.
Going back to \eqref{riprendere}, we conclude that $\lim_{n\to\infty}	\left|u(t_n,\phi_n)-u(t,\phi)\right|=0$, hence $u\colon [0,T]\times H\to \mathbb{R}$ is continuous, as desired.

We now prove that $u\in L^\infty\big(0,T; C_b^{2+\beta}\left(H\right)\big)$.
Since $\Phi \in C_b^{2+\beta}\left(H\right)$, there exists a constant $C_\Phi>0$ such that 
\begin{equation}\label{phi_2}
	\norm{D^2\Phi\left(\phi\right)-D^2\Phi\left(\psi\right)}_{\mathcal{L}\left(H;H\right)}\le C_\Phi\norm{\phi-\psi}^\beta_2,\quad \phi,\psi \in H.
\end{equation}	
Obviously, from the boundedness of $\Phi$ we have $\norm{u}_\infty= \sup_{t\in [0,T]}\sup_{\phi\in H}\left|u(t,\phi)\right|<\infty$. First, we want to show that, for every $t\in\left[0,T\right]$, $u(t,\cdot)\in C_b^1(H)$, with 
	\begin{equation}\label{grad_u}
		\left\langle \nabla u\left(t,\phi\right) , \psi\right\rangle_H=\mathbb{E}\left[\left\langle \nabla \Phi\left(w^{t,\phi}_T\right), Dw^{t,\phi}_T\psi\right\rangle_H\right]
		,\quad \phi,\,\psi\in H.
	\end{equation}
To see this, by Taylor's formula applied to $\Phi$ we compute, for every $\phi,\,h\in H$,
\begin{align}\label{ba1}
\notag&\mathbb{E}\left[\left|\Phi\left(w^{t,\phi+h}_T\right)-\Phi\right(w^{t,\phi}_T\left)
	- \left\langle \nabla \Phi\left(w^{t,\phi}_T\right), Dw^{t,\phi}_Th\right\rangle_H\right|\right]
	\\&\notag\le \norm{\nabla\Phi}_\infty
		\mathbb{E}\left[
		\norm {w^{t,\phi+h}_T-w^{t,\phi}_T-Dw^{t,\phi}_Th}_2\right]\\&\notag
	\qquad\qquad\qquad+\mathbb{E}\left[\left|\int_{0}^{1}\left\langle
	\nabla\Phi\left(w^{t,\phi}_T+r\left(w^{t,\phi+h}_T-w^{t,\phi}_T\right)\right)-\nabla\Phi\left(w^{t,\phi}_T\right),w^{t,\phi+h}_T-w^{t,\phi}_T
	\right\rangle_H\dd r \right|\right]
	\\&\le \norm{\nabla \Phi}_\infty \norm{w^{t,\phi+h}_T-w^{t,\phi}_T-Dw^{t,\phi}_Th}_\mathcal{H}
	+\norm{D^2\Phi}_\infty\norm{w^{t,\phi+h}_T-w^{t,\phi}_T}_\mathcal{H}^2
	=\text{o}\left(\norm{h}_2\right).
\end{align}
Here, for the second inequality we use the Lipschitz continuity of the map $\nabla \Phi \colon H\to H$ --guaranteed by the mean value theorem-- and for the third equality we invoke Corollary \ref{lip} and Theorem \ref{diff_w_thm}. This shows \eqref{grad_u}, from which 
we deduce  the continuity of the function $\nabla u(t,\cdot)\colon H \to H$. In particular, by \eqref{alpha-h},	there exists a constant $C_1=C_1(d,T)$ such that $\norm{\nabla u}_\infty\le C_1\norm{\nabla\Phi}_\infty$.
\\
We also note that, arguing as in \eqref{ba1} and thanks to the estimates of $\big|\!\big|{w^{t,\phi+h}_T-w^{t,\phi}_T-Dw^{t,\phi}_Th}\big|\!\big|_\mathcal{H}$ in the proof of Theorem \ref{diff_w_thm} (see, for instance, \eqref{ma2}-\eqref{ma4}),  for every $M>0$ we have
\begin{equation}\label{sghe}
\sup_{t\in[0,T]}\sup_{\norm{\phi}_2,\norm{\psi}_2\le M}\mathbb{E}\left[\left|\Phi\left(w^{t,\phi+h\psi}_T\right)-\Phi\right(w^{t,\phi}_T\left)
-h \left\langle \nabla \Phi\left(w^{t,\phi}_T\right), Dw^{t,\phi}_T\psi\right\rangle_H\right|\right]=\text{o}\left({h}\right),\quad h\in\mathbb{R}.
\end{equation}
which gives the continuity of the map $(t,\phi,\psi)\mapsto \langle \nabla u (t,\phi), \psi \rangle_H$ in $[0,T]\times H\times H$ as $u\in C([0,T]\times H;\mathbb{R})$.

Secondly, we claim that $u\left(t,\cdot\right)$ is twice Fréchet differentiable in $H$, with 
\begin{align}\label{D^2u}
	 &\notag\left\langle D^2u\left(t,\phi\right)\psi,\eta\right\rangle_H
	=
	\mathbb{E}\left[\left\langle D^2\Phi\left(w^{t,\phi}_T\right)Dw_T^{t,\phi}\psi,Dw_T^{t,\phi}\eta\right\rangle_H
	+\left\langle
	\nabla \Phi\left(w_T^{t,\phi}\right), D^2w_T^{t,\phi}\left(\psi,\eta\right)
	\right\rangle_H\right]
	,\\&\qquad \phi,\psi,\eta \in H.
\end{align}
 Indeed, recalling \eqref{grad_u}, an application of Taylor's formula on $\nabla \Phi$ yields
\begin{align}\label{aste1}
\notag&\left|\left\langle \nabla u\left(t,\phi+h\right)-\nabla u \left(t,\phi\right)- D^2u\left(t,\phi\right)h, \psi\right\rangle_H\right|\\
&\notag\quad=
\Big|\mathbb{E}\Big[\left\langle \nabla \Phi\left(w^{t,\phi+h}_T\right), Dw_T^{t,\phi+h}\psi\right\rangle_H
-
\left\langle \nabla \Phi\left(w^{t,\phi}_T\right), Dw_T^{t,\phi}\psi\right\rangle_H\\&\notag \hspace{10em}-\left\langle D^2\Phi\left(w_T^{t,\phi}\right)Dw_T^{t,\phi}h,Dw_T^{t,\phi}\psi\right\rangle_H-\left\langle
\nabla \Phi\left(w_T^{t,\phi}\right), D^2w_T^{t,\phi}\left(h,\psi\right)
\right\rangle_H\Big]\Big|
\\\notag&
\quad \le \mathbb{E}\left[\left|
\left\langle D^2\Phi\left(w^{t,\phi}_T\right)\left(w^{t,\phi+h}_T-w^{t,\phi}_T-Dw^{t,\phi}_Th\right), Dw_T^{t,\phi}\psi\right\rangle_H\right|\right]
\\&\qquad\quad  \notag
+\mathbb{E}\left[\left|\left\langle \nabla \Phi\left(w_T^{t,\phi}\right),Dw_T^{t,\phi+h}\psi-Dw_T^{t,\phi}\psi-D^2w_T^{t,\phi}\left(h,\psi\right)\right\rangle_H\right|\right]
\\&\qquad\quad  +
\mathbb{E}\left[\left|\left\langle \nabla \Phi\left(w_T^{t,\phi+h}\right)-\nabla \Phi\left(w_T^{t,\phi}\right),\left(Dw_T^{t,\phi+h}-Dw_T^{t,\phi}\right)\psi\right\rangle_H\right|
\right]+R_\Phi\left(\phi,\psi, h\right)\notag\\&\quad \eqqcolon
\left(\mathbf{\upperRomannumeral{1}}_1+\mathbf{\upperRomannumeral{2}}_1+\mathbf{\upperRomannumeral{3}}_1+R_{\Phi}\right)\left(\phi,\psi, h\right)
, 
\end{align}
for every $\phi,\psi, h\in H.$ Here, we denote by 
\begin{align*}
	R_{\Phi}\left(\phi,\psi,h\right)=\!\mathbb{E}\left[\left|\left\langle\int_{0}^{1}\!\left(D^2\Phi\left(w_T^{t,\phi}+r\left(w_T^{t,\phi+h}-w_T^{t,\phi}\right)\right)\!-\!D^2\Phi\left(w_T^{t,\phi}\right)\right)\left(w_T^{t,\phi+h}-w_T^{t,\phi}\right)\dd r, Dw^{t,\phi}_T\psi\right\rangle_H\right|\right].
\end{align*}
Using \eqref{alpha-h}, \eqref{phi_2} and Corollary \ref{lip},  for some constant $c_3>0$ we compute 
\begin{align*}
	R_{\Phi}\left(\phi,\psi,h\right)&\le C_\Phi{C_1} \mathbb{E}\left[\norm{w_T^{t,\phi+h}-w_T^{t,\phi}}^{1+\beta}_2\right] \norm{\psi}_2
	\le 
	C_\Phi{C_1} \norm{w_T^{t,\phi+h}-w_T^{t,\phi}}^{1+\beta}_\mathcal{H}\norm{\psi}_2
	\\&\le c_3 \norm{\psi}_2\norm{h}_2^{1+\beta},\quad \phi,\psi,h\in H,
\end{align*}
where we also employ Jensen's inequality noticing that $1+\beta\le 2$.
Next,
\begin{align*}
\left|	\mathbf{\upperRomannumeral{1}}_1\left(\phi,\psi,h\right)\right|
\le {C_1}\norm{D^2\Phi}_\infty \norm{\psi}_2\norm{w^{t,\phi+h}_T-w^{t,\phi}_T-Dw^{t,\phi}_Th}_\mathcal{H},\quad \phi,\psi, h \in H,
\end{align*}
and 
\begin{align*}
\left|\mathbf{\upperRomannumeral{2}}_1\left(\phi,\psi,h\right)\right|
\le 
\norm{\nabla \Phi}_\infty
\norm{\psi}_2
\norm{Dw_T^{t,\phi+h}-Dw_T^{t,\phi}-D^2w_T^{t,\phi}\left(h,\cdot\right)}_{\mathcal{L}\left(H;\mathcal{H}\right)},\quad \phi,\psi,h\in H.
\end{align*}
Finally, by Corollary \ref{lip}   and \eqref{alpha-h}  (recall that,  under Assumption \ref{B_3}, we  take $\gamma=\beta$ in \eqref{meglio}, see \eqref{unavolta})
\[
\left|	\mathbf{\upperRomannumeral{3}}_1\left(\phi,\psi,h\right)\right|
\le 
C_1\norm{D^2\Phi}_\infty\norm{\psi}_2\mathbb{E}\left[\norm{w_T^{t,\phi+h}-w_T^{t,\phi}}_2^{1+\beta}\right]
\le 
\tilde{c}\norm{D^2\Phi}_\infty\norm{\psi}_2 \norm{h}^{1+\beta}_2,\quad \phi,\psi,h \in H,
\]
for some $\tilde{c}>0$.
Going back to \eqref{aste1}, by Theorem \ref{diff_w2_thm}, the previous estimates let us write, for some constant $C>0,$
\begin{align}\label{ba2}
	\notag&\norm{\nabla u\left(t,\phi+h\right)-\nabla u \left(t,\phi\right)- D^2u\left(t,\phi\right)h}_2
	=
	\sup_{\norm{\psi}_2\le 1} \left|\left\langle \nabla u\left(t,\phi+h\right)-\nabla u \left(t,\phi\right)- D^2u\left(t,\phi\right)h,\psi\right\rangle_H\right|\\&\quad \notag
	\le C\left(\norm{w^{t,\phi+h}_T-w^{t,\phi}_T-Dw^{t,\phi}_Th}_\mathcal{H}+\norm{Dw_T^{t,\phi+h}-Dw_T^{t,\phi}-D^2w_T^{t,\phi}\left(h,\cdot\right)}_{\mathcal{L}\left(H;\mathcal{H}\right)}+\norm{h}_2^{1+\beta}\right)\\&\quad =\text{o}\left(\norm{h}_2\right),\quad\phi, h \in H,  
\end{align}
which proves \eqref{D^2u}. In particular, by \eqref{alpha-h}-\eqref{alpha-h2}, there is a constant $C_2=C_2(d,T)>0$ such that 
\[
	\norm{D^2u}_\infty
	\le 
	C_2\left(\norm{D^2\Phi}_\infty+\norm{\nabla \Phi}_\infty\right).
\]
In addition, arguing as in \eqref{ba2} (see also \eqref{sghe}) and thanks to the estimates of $\big|\!\big|Dw_T^{t,\phi+h}-Dw_T^{t,\phi}-D^2w_T^{t,\phi}\left(h,\cdot\right)\big|\!\big|_{\mathcal{L}\left(H;\mathcal{H}\right)}$ in the proof of Theorem \ref{diff_w2_thm} (see, for instance, \eqref{ma3}),  for every $M>0$ we have
\begin{align*}
	\sup_{t\in[0,T]}\sup_{\norm{\phi}_2,\norm{\psi}_2,\norm{\eta}_2\le M} \left|\left\langle \nabla u\left(t,\phi+h\psi\right)-\nabla u \left(t,\phi\right)-h D^2u\left(t,\phi\right)\psi,\eta\right\rangle_H\right|=\text{o}\left(h\right),\quad  h\in\mathbb{R}.
\end{align*}
Since we have proved that $\langle \nabla u(t,\phi),\psi\rangle_H$ is continuous in $[0,T]\times H\times H$, the previous equation ensures that the map $(t,\phi,\psi,\eta)\mapsto\langle D^2u(t,\phi)\psi,\eta\rangle_H$ is continuous in $[0,T]\times H\times H\times H$, as desired.

In conclusion, we prove that $u\left(t,\cdot \right)\in C_b^{2+\beta}\left(H\right)$. From \eqref{D^2u}, for every $\phi_1,\phi_2\in H,$
\begin{align*}
	&\left\langle \left(D^2u\left(t,\phi_1\right) -D^2u\left(t,\phi_2\right)\right)\psi,\eta\right\rangle_H\\&\quad =	
	\mathbb{E}\left[\left\langle \left(D^2\Phi\left(w^{t,\phi_1}_T\right)-D^2\Phi\left(w^{t,\phi_2}_T\right)\right)Dw_T^{t,\phi_1}\psi,Dw_T^{t,\phi_1}\eta\right\rangle_H\right]
	\\&\qquad 	
	+
	\mathbb{E}\left[\left\langle D^2\Phi\left(w^{t,\phi_2}_T\right)\left(Dw_T^{t,\phi_1}-Dw_T^{t,\phi_2}\right)\psi,Dw_T^{t,\phi_1}\eta\right\rangle_H\right]
	\\&\qquad +
	\mathbb{E}\left[\left\langle D^2\Phi\left(w^{t,\phi_2}_T\right)Dw_T^{t,\phi_2}\psi,\left(Dw_T^{t,\phi_1}-Dw_T^{t,\phi_2}\right)\eta\right\rangle_H\right]
	\\&\qquad +\mathbb{E}\left[\left\langle
	\nabla \Phi\left(w_T^{t,\phi_1}\right)-\nabla \Phi\left(w_T^{t,\phi_2}\right), D^2w_T^{t,\phi_1}\left(\psi,\eta\right)
	\right\rangle_H\right]
		\\&\qquad +\mathbb{E}\left[\left\langle
	\nabla \Phi\left(w_T^{t,\phi_2}\right), \left(D^2w_T^{t,\phi_1}-D^2w_T^{t,\phi_2}\right)\left(\psi,\eta\right)
	\right\rangle_H\right]	\\&\qquad\eqqcolon
	\left(\mathbf{\upperRomannumeral{1}}_2+\mathbf{\upperRomannumeral{2}}_2+\mathbf{\upperRomannumeral{3}}_2+\mathbf{\upperRomannumeral{4}}_2+\mathbf{\upperRomannumeral{5}}_2\right)\left(\phi_1,\phi_2,\psi,\eta\right),\quad \psi,\eta\in H.
\end{align*}
To keep notation the short, in what follows we consider arbitrary $\psi, \eta \in H$, we do not write $\left(\phi_1,\phi_2,\psi, \eta\right)$ and we denote by $c=c(d,T,\beta)>0$  a constant that might change from line to line. Observe that, by  \eqref{alpha-h}-\eqref{phi_2}, Corollary \ref{lip} and Jensen's inequality,
\[
	\left|\mathbf{\upperRomannumeral{1}}_2\right|\le c\,C_{\Phi}\norm{\psi}_2\norm{\eta}_2\norm{\phi_1-\phi_2}_2^\beta.
\] 
Moreover, by \eqref{alpha-h} (see also \eqref{unavolta}),
\[
	\left|\mathbf{\upperRomannumeral{2}}_2\right|\le  c\norm{D^2\Phi}_\infty\norm{\psi}_2\norm{\eta}_2\norm{\phi_1-\phi_2}^{\beta}_2.
\]
An analogous estimate holds for $|\mathbf{\upperRomannumeral{3}}_2|$, too. As for the remaining addends, by \eqref{alpha-h2} we have 
\[
	\left|\mathbf{\upperRomannumeral{4}}_2\right|\le  c \norm{D^2\Phi}_\infty\norm{\psi}_2\norm{\eta}_2 \norm{\phi_1-\phi_2}_2,
\] 
and
\[
	\left|\mathbf{\upperRomannumeral{5}}_2\right|\le c\norm{\nabla \Phi}_\infty\norm{\psi}_2\norm{\eta}_2\norm{\phi_1-\phi_2}_2^\beta.
\]
Thus, the function $D^2u\left(t,\cdot\right)\colon H\to \mathcal{L}\left(H;H\right)$ is $\beta-$H\"older continuous uniformly in time and the proof is complete.
\end{proof}
\end{appendices}
	
\end{document}